\renewcommand{\pmod}[1]{{\ifmmode\text{\rm\ (mod~$#1$)}\else\discretionary{}{}{\hbox{ }}\rm(mod~$#1$)\fi}}
\newtheorem{theorem}{Theorem}[section]
\newtheorem{prop}[theorem]{Proposition}
\newtheorem{lemma}[theorem]{Lemma}
\newtheorem{cor}[theorem]{Corollary}
\newtheorem{conjecture}[theorem]{Conjecture}
\theoremstyle{definition}
\newtheorem{defn}[theorem]{Definition}
\numberwithin{equation}{section}
\numberwithin{table}{section}
\numberwithin{figure}{section}
\title{Difference Necklaces}
\author{Ethan P.\ White\footnote{The first author is supported by funding from NSERC and the Killam Trusts.}, Richard K.\ Guy and Renate Scheidler\footnote{The third author is supported by an NSERC Discovery Grant.}}
\begin{document}
\maketitle
{\let\thefootnote\relax\footnote{2020 \emph{Mathematics Subject Classification}. 11B37, 05C30, 05C45, 11Y55}}


\begin{abstract} 
An $(a,b)$-difference necklace of length $n$ is a circular arrangement of the integers $0, 1, 2, \ldots , n-1$ such that any two neighbours have absolute difference $a$ or $b$. We prove that, subject to certain conditions on $a$ and $b$, such arrangements exist, and provide recurrence relations for the number of $(a,b)$-difference necklaces for $( a, b ) = ( 1, 2 )$, $( 1, 3 )$, $( 2, 3 )$ and $( 1, 4 )$. Using techniques similar to those employed for enumerating Hamiltonian cycles in certain families of graphs, we obtain these explicit recurrence relations and prove that the number of $(a,b)$-difference necklaces of length $n$ satisfies a linear recurrence relation for all permissible values $a$ and $b$. Our methods generalize to necklaces where an arbitrary number of differences is allowed. 
\end{abstract}

\noindent \textbf{Keywords.} Difference necklace, recurrence relation, Hamiltonian cycle, weighted digraph, transfer matrix method.


\section{Introduction}

\subsection{Background and Motivation} 

It is natural to ask whether it is possible to order the integers $1, 2, \ldots , n$ in such a way that any two neighbours are subject to certain conditions. In case such arrangements are known to exist for any given length $n$, an obvious follow-up task is to count them. Although seemingly recreational in nature, these types of questions tend to be extremely difficult and bear relationships to important problems in combinatorics, graph theory and number theory.

A \emph{chain} is an ordering of $1, 2, \ldots , n$ that imposes an arithmetic  restriction on any two adjacent numbers; a circular chain, where the first and last term are also considered neighbours, is referred to as a \emph{necklace} and its elements as \emph{beads}. In \cite{BG}, Berlekamp and the second author investigated necklaces where any two adjacent beads sum to a particular type, such as a square, cube or triangular number. They illustrated how a search for such chains and necklaces can be facilitated by considering paths of billiard balls on a rectangular or other polygonal billiard table. Using this technique, they gave necessary and sufficient conditions for the existence of chains where neighbours sum to a Fibonacci number or a Lucas number.

The problem of finding a \emph{square sum chain} of length 15, i.e.\ a chain for which the sum of any two adjacent terms is a perfect square, was posed as Puzzle 4 in~\cite{Y}. The chain
\[ 9,7,2,14,11,5,4,12,13,3,6,10,15,1,8 \] 
is a solution that can be verified to be unique by inspecting the graph on the vertices $1,2,\ldots,15$ with an edge joining two vertices if and only if their sum is a square. Generalizing this idea, the sequence A090461 in the \emph{Online Encyclopedia of Integer Sequences}~\cite{NS} describes the integers $n$ for which there exists a square sum chain of length $n$, and the sequences A090460 and A071984 count the number of essentially different square sum necklaces and square sum chains, respectively. Other related sequences include A108658, A272259 and A107929. Surprisingly, the existence question for square sum chains and necklaces was only recently settled completely by R. Gerbicz in his Mersenne Forum blog post \cite{MS}. Deploying a combination of construction and computation, he proved that square sum necklaces of length $n$ exist for all $n \ge 32$ and square sum chains for all lengths $n \ge 25$. Very little seems to be known beyond the cases discussed here. Some cube sum chains and necklaces can be found on the puzzle site \cite{PP}; their minimal lengths are 305 for a cube chain and 473 for a cube necklace. The question of arbitrary power sum chains was posed and discussed on Math Overflow \cite{MO} in 2015; see also sequence A304120 in \cite{NS}.

An entirely different picture emerges when considering \emph{difference chains/necklaces} where sums are replaced by differences (up to sign). Since 1 is a square, the integers $1, 2, \ldots , n$ in this order always form a square difference chain, which is a square difference necklace when $n-1$ is a square. A version of this problem that rules out 1 as a permissible square was briefly considered in \cite{MD}. If the value 1 is allowed, square difference necklaces seem to be far more abundant than their sum counterparts, even if only two square values (and their negatives) are allowed. For example, difference necklaces of any length $\ge 7$ exist in which adjacent terms differ in absolute value by~1 or~$4$ (see Section~\ref{count14}). They also exist for any length beyond a sufficiently large threshold when~4 is replaced by any even square. Necklaces with two possible neighbour differences (and their negatives) also exist for many other pairs of values beyond squares (see Theorem~\ref{exist}). This motivated us to investigate these types of difference necklaces in more detail. 

Our objects of interest are \emph{$(a,b)$-difference necklaces}, i.e.\ necklaces for which the absolute difference of any two adjacent beads takes on one of two possible values $a$ or $b$. Consider a grid on $n$ vertices, where two vertices are adjacent if and only if their difference is $\pm a$ or $\pm b$; see Figure~\ref{15graph} for example. Then the $(a,b)$-difference necklaces of length~$n$ are precisely the Hamiltonian cycles in this graph. The structure of this graph is similar to that of a grid graph on $n$ vertices, especially if one of the difference values is~1. The $m \times k$ \emph{$($rectangular$)$ grid graph} is the Cartesian product of two paths of respective lengths~$m$ and~$k$. Counting Hamiltonian cycles in grid graphs is a difficult problem that has received significant attention and has to date only been solved in certain special cases. Solutions for $m \leq 6$ can be found in \cite{KD} and the sources cited therein. Pettersson~\cite{P} used dynamic programming to obtain counts for $m = k \le 26$. Stoyan and Strehl~\cite{SS} encoded Hamiltonian cycles in grid graphs as words in a regular language and constructed a finite automaton that recognizes this language. They determined the generating functions for the number of Hamiltonian cycles for $m \leq 8$ and the number of cycles of length in the thousands for $9 \leq m \leq 12$. 

Counting and enumerating objects that are closely related to  difference chains and necklaces also arise in contexts other than enumerating Hamiltonian cycles; we mention only a few here. Flajolet and Sedgewick's kangaroo jumping problem \cite[p.\ 373]{FS} can be expressed as finding the number of chains starting at 1 and ending at $n+1$ that allow neighbour differences of $\pm 1$ and 2. The solution represents the counting function for many objects, as shown by the extensive entry for A000930 in \cite{NS}. Flajolet \emph{et al}'s results on threshold probabilities for the robustness of a random graph \cite{PF} utilize the enumeration of \emph{avoiding permutations} which are difference chains that allow any neighbour difference other than $\pm 1$. 

In Section~\ref{S:exist}, we prove that under some obvious necessary conditions on $a$ and $b$, $(a,b)$-difference necklaces of sufficiently large length always exist when $2a \le b$. Recurrence relations for the counts of $(a,b)$-difference necklaces with $(a,b) = (1,2), (1,3), (2,3)$ and $(1,4)$ are given in Section~\ref{S:count}. Our main result appears in Section~\ref{S:general} and establishes that the number of $(a,b)$-difference necklaces satisfies a linear homogeneous recurrence relation for any permissible values $a, b$. Our proof technique is similar to the Kwong-Rogers approach in~\cite{KD} which uses a variant of the transfer matrix method to enumerate Hamiltonian cycles in the grid graphs $P_4 \times P_k$ and $P_5 \times P_k$ where $P_j$ denotes a path of length~$j$. We close with concluding remarks in Section~\ref{S:conclude} and provide some computational data for counts of $(a,b)$-necklaces for various pairs $(a,b)$ in an appendix at the end.
\subsection{Notation} \label{notation}

Throughout, let $n, a, b$ be positive integers with $n \ge 3$ and $a < b$. For ease of notation in subsequent proofs, we consider permutations of the integers $0, 1, \ldots , n-1$ rather than $1, 2, \ldots , n$; this downshift by 1 does not affect any of our results. Moreover, since we only consider difference necklaces here, we will henceforth refer to them simply as necklaces.

\begin{defn}
An \emph{$(a,b)$-necklace of length $n$} is a circular arrangement of the integers $0, 1, \ldots, n-1$ such that any two adjacent beads have difference $\pm a$ or $\pm b$. The number of distinct $(a,b)$-necklaces of length $n$ is denoted $N_{a,b}(n)$.
\end{defn}

Here, we consider two $(a,b)$-necklaces of length $n$ to be distinct if one cannot be obtained from the other by rotation or by reflection on some axis through the centre of the circular arrangement. 
A $(4,7)$-necklace of length 11 is depicted in Figure~\ref{47}.

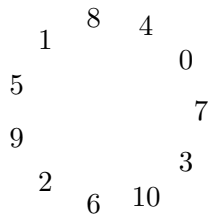
\begin{figure}[h!]    
\begin{center}
\begin{tikzpicture}
{
\draw (1*360/11 :1.25 cm) node{0}; 
\draw (2*360/11 :1.25 cm) node{4}; 
\draw (3*360/11 :1.25 cm) node{8}; 
\draw (4*360/11 :1.25 cm) node{1};
\draw (5*360/11 :1.25 cm) node{5};
\draw (6*360/11 :1.25 cm) node{9};
\draw (7*360/11 :1.25 cm) node{2};
\draw (8*360/11 :1.25 cm) node{6};
\draw (9*360/11 :1.25 cm) node{10};
\draw (10*360/11 :1.25 cm) node{3};
\draw (11*360/11 :1.25 cm) node{7};}
\end{tikzpicture}

\end{center}
\caption{A $(4,7)$-necklace of length 11. Any two neighbours differ by $\pm 4$ or $\pm 7$.}
\label{47}
\end{figure}

Let $G_{a,b}(n)$ be the graph with vertex set $\{0, 1, \ldots n-1 \}$ where two vertices $x,y$ are adjacent if and only if $|x-y| \in \{a,b\}$. Then the distinct $(a,b)$-necklaces of length $n$ are in one-to-one correspondence with the Hamiltonian cycles in $G_{a,b}(n)$. Figure~\ref{15graph} shows the graph $G_{1,5}(18)$.

\begin{figure}[h!]
\begin{center}
\begin{tikzpicture}[scale=1]

\node[draw][circle](1) at (0,4) {0};
\node[draw][circle](2) at (0,3) {1};
\node[draw][circle](3) at (0,2) {2};
\node[draw][circle](4) at (0,1) {3};
\node[draw][circle](5) at (0,0) {4};
\node[draw][circle](6) at (1.7,4) {5};
\node[draw][circle](7) at (1.7,3) {6};
\node[draw][circle](8) at (1.7,2) {7};
\node[draw][circle](9) at (1.7,1) {8};
\node[draw][circle](10) at (1.7,0) {9};;
\node[draw][circle, inner sep=2pt](11) at (3.4,4) {10};
\node[draw][circle, inner sep=2pt](12) at (3.4,3) {11};
\node[draw][circle, inner sep=2pt](13) at (3.4,2) {12};
\node[draw][circle, inner sep=2pt](14) at (3.4,1) {13};
\node[draw][circle, inner sep=2pt](15) at (3.4,0) {14};
\node[draw][circle, inner sep=2pt](16) at (5.1,4) {15};
\node[draw][circle, inner sep=2pt](17) at (5.1,3) {16};
\node[draw][circle, inner sep=2pt](18) at (5.1,2) {17};

\draw (1)--(2)--(3)--(4)--(5)--(6)--(7)--(8)--(9)--(10)
(10)--(11)--(12)--(13)--(14)--(15)--(16)--(17)--(18) (1)--(6)--(11)--(16) (2)--(7)--(12)--(17)
(3)--(8)--(13)--(18) (4)--(9)--(14) (5)--(10)--(15);
\end{tikzpicture}
\end{center}
\caption{The graph $G_{1,5}(18)$. Any two adjacent vertices differ by $\pm 1$ or $\pm 5$.} \label{15graph}
\end{figure}
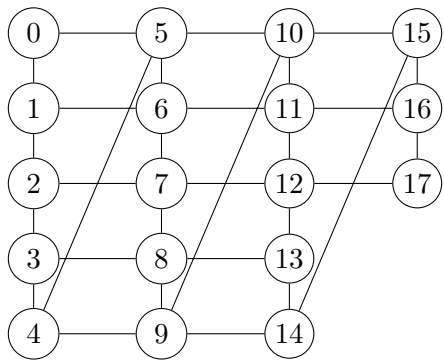

When drawing $G_{a,b}(n)$, we will always arrange the vertices in columns of length $b$ in ascending order to form a ragged rectangular grid as illustrated in Figure~\ref{15graph}. When $a = 1$, this vertex arrangement is very similar to the grid graph $P_{\lceil n/b \rceil} \times P_b$, except that the number of vertices in the rightmost column may be less than $b$.

Note that $G_{a,b}(n)$ is a subgraph of the circulant graph on $n$ vertices with jumps $a, b$ (the latter allows additional neighbour differences $\pm(n-a)$ and $\pm(n-b)$).


\section{Existence of $(a,b)$-Necklaces} \label{S:exist}

All beads in an $(a,b)$-necklace must have the same remainder modulo $\gcd(a,b)$, so such necklaces do not exist when $a$ and $b$ have a non-trivial common factor. In fact, in this case, $G_{a,b}(n)$ is disconnected. We therefore assume henceforth that $a$ and $b$ are coprime. Moreover, if $a$ and $b$ are both odd, then $G_{a,b}(n)$ is bipartite since there are no edges between vertices of the same parity. Consequently, there are no $(a,b)$-necklaces of odd length in this case.

On the other hand, if $a$ and $b$ are coprime, then $G_{a,b}(a+b)$ is the circulant graph on jumps $a, b$ which is 2-regular and connected since gcd$(a+b,a,b)=1$. Hence it is a cycle, and this cycle represents the unique $(a,b)$-necklace of length $a+b$. No $(a,b)$-necklaces of length less than $a+b$ exist, since the only choices for neighbours of bead $b-1$ are $b-a-1$, $b+a-1$ and $2b-1$, and the last two of these three numbers exceeds $b+a-2$. 

In this section, we prove the existence of $(a,b)$-necklaces of any sufficiently large length~$n$ subject to the aforementioned conditions on $a, b$ and the additional restriction that \mbox{$2a \le b$}. For $a=1$, this inequality always holds, and the proof constructs an explicit Hamiltonian cycle in $G_{a,b}(n)$. For $a \ge 2$, the proof proceeds in three stages. First, we build an $(a,b)$-necklace of length $3a+b$. Next, we illustrate how two $(a,b)$-necklaces of respective lengths $m$ and $n$ can be ``glued together'' to form an $(a,b)$-necklace of length $m+n$. Finally, the existence of $(a,b)$-necklaces of respective lengths $a+b$ and $3a+b$ allows the conclusion that there are $(a,b)$-necklaces of any sufficiently large length.

We begin with the case $a = 1$ and exploit the similarity of $G_{1,b}(n)$ to the grid graph $P_{\lceil n/b \rceil} \times P_b$ to construct an explicit Hamiltonian cycle. Recall that $P_m \times P_k$ is Hamiltonian if and only if $km$ is even, and a Hamiltonian cycle can be traced explicitly by a ``snaking'' pattern. A similar technique can be employed in our context.

\begin{lemma}\label{a=1} 
If $b\geq 2$ is even, then there exists a $(1,b)$-necklace of length $n$ for all $n \geq 2b$. If $b \geq 3$ is odd, then there exists a $(1,b)$-necklace of length $n$ for all even $n \geq 2b$. 
\end{lemma}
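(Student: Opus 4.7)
My plan is to construct an explicit Hamiltonian cycle in $G_{1,b}(n)$, exploiting the ragged rectangular grid layout described in Section~\ref{notation}. Writing $n = qb + r$ with $q \geq 2$ and $0 \leq r < b$, the vertices form $q$ full columns of height $b$ together with a possibly empty partial column of height $r$. Two kinds of edges are available: difference-$1$ edges provide intra-column steps as well as ``wrap'' steps from the bottom of column $c$ to the top of column $c+1$, while difference-$b$ edges provide horizontal steps between same-row vertices in adjacent columns. The construction is essentially a snake/comb pattern through this grid, chosen so that the closing step is always a legitimate difference-$1$ or difference-$b$ edge back to $0$.

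The first step is to handle the case $r = 0$ with $b$ even: descend column $0$ from $0$ to $b-1$, then run a standard comb through the $(q-1) \times b$ rectangular subgrid on columns $1, \ldots, q-1$, starting at $2b-1$ and ending at $b$, and finally close the cycle via the difference-$b$ edge $b \to 0$. Because $b$ is even, the comb, which alternates left-right sweeps joined by single vertical steps at each end column, fits this subgrid exactly and reaches $b$ as its final vertex. The case $1 \leq r \leq b - 1$ with $b$ even is the more delicate one: I would begin with the single step $0 \to 1$, comb through rows $1$ through $b-1$, jump from the end of row $b-1$ to the top of the partial column using the wrap edge $(qb-1,\, qb)$, sweep row $0$ leftward through the tops of all columns, and close with $b \to 0$. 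The comb's initial direction is chosen according to the parity of $r$ so that row $r-1$ exits on the side where the partial column is present, which lets the comb's width contract cleanly from $q+1$ to $q$ at the $(r-1)\to r$ seam.

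The case $b$ odd with $n$ even proceeds along the same lines. Since $G_{1,b}(n)$ is bipartite in this regime, the evenness of $n$ is necessary; it also forces $q$ and $r$ to share a parity, and this is exactly what the direction alternation of the comb needs in order to produce valid row-to-row transitions. The subcase $b = 2$ is a degenerate version of the same construction and can be recorded at the outset. The main obstacle I expect is the parity bookkeeping at the partial-column seam: in each subcase (on the parities of $b$ and $r$) one must verify that the endpoint of row $r-1$ is adjacent to the corresponding endpoint of row $r$, and that the last vertex of the comb lies adjacent to both the top of the partial column (so the wrap edge is available) and to $0$ (so the closing edge is available). Once these transitions are checked by a short case analysis, the remainder, confirming that every vertex is visited exactly once and that every consecutive pair of beads differs by $1$ or $b$, reduces to mechanical verification.
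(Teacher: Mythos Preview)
Your overall approach---constructing explicit snake/comb Hamiltonian cycles in the ragged grid layout of $G_{1,b}(n)$ via a case analysis on the parities of $b$ and of $r = n \bmod b$---is exactly what the paper does: it presents four snake patterns (A)--(D), one for each parity combination, relying on figures rather than formulas.

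However, your specific five-step template for the case $b$ even, $1 \le r \le b-1$ fails when $r$ is even (equivalently, when $n$ is even). You propose: $0 \to 1$; a comb through rows $1,\ldots,b-1$ (including the partial-column vertices $qb+1,\ldots,qb+r-1$) from vertex $1$ to vertex $qb-1$; the wrap edge $qb-1 \to qb$; sweep row $0$ leftward to $b$; close $b \to 0$. The required Hamiltonian path from $1$ to $qb-1$ in that induced subgraph simply does not exist. For $b=4$, $q=2$, $r=2$ (so $n=10$), the vertex $qb+1=9$ has only one neighbour (namely $5$) once row $0$ is removed, so it must be an endpoint of any Hamiltonian path through rows $1$--$3$; this rules out endpoints $1$ and $qb-1=7$. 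For $b=6$, $q=2$, $r=4$ one can likewise check that the forced path segments through the degree-$2$ vertices $5,10,11,13,14,15$ are incompatible with endpoints $1$ and $11$. So this is not merely ``parity bookkeeping at the seam'': the whole template must change for this parity class, not just the comb's initial direction.

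The repair is easy but requires a genuinely different pattern. One option is to aim the comb at $qb+1$ instead of $qb-1$ and then step $qb+1 \to qb$ within column $q$ before the row-$0$ sweep; another is simply to adopt the paper's pattern (A), which snakes through the grid in a different way for exactly this case.
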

\begin{proof}
Recall that $(1,b)$-necklaces only exist for even lengths when $b$ is odd. We present snake patterns, depending on the parity of $b$ and $n \pmod{b}$, that generate a Hamiltonian cycle in $G_{1,b}(n)$. Note that when $b$ is even, $n \pmod{b}$ has the same parity as~$n$. This results in four cases, yielding four patterns: (A) $b$ and $n$ even, (B) $b$ even and $n$ odd, (C) $b$ odd and $n \pmod b$ even and (D) $b$ and $n \pmod b$  odd. Using our aforementioned graphical vertex arrangement for $G_{1,b}(n)$, we illustrate the idea in Figure~\ref{snakes}.

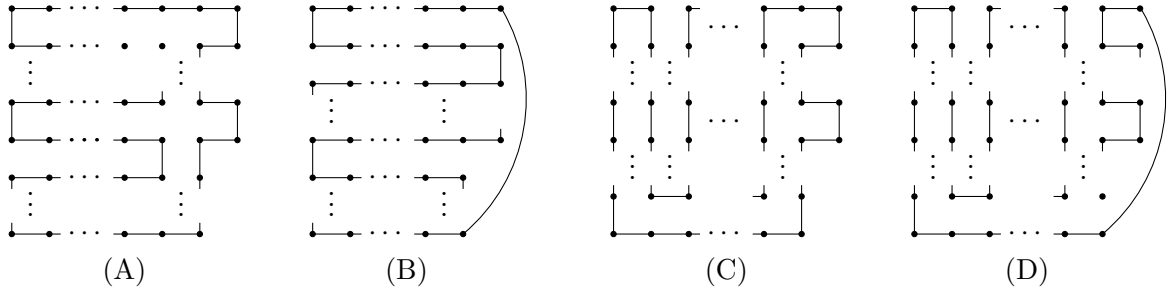
\begin{figure}[h!]
\begin{center}
\begin{tikzpicture}[scale=0.5]

\filldraw

(0,6) circle(2pt) 
(1,6) circle(2pt) 
(3,6) circle(2pt) 
(4,6) circle(2pt) 
(5,6) circle(2pt) 
(6,6) circle(2pt) 

(0.5,4.5) node{$\vdots$}
(4.5,4.5) node{$\vdots$}

(0,5) circle(2pt) 
(1,5) circle(2pt) 
(3,5) circle(2pt) 
(4,5) circle(2pt) 
(5,5) circle(2pt) 
(6,5) circle(2pt)

(2,6) node{$\cdots$}
(2,5) node{$\cdots$}

(0,3.5) circle(2pt) 
(1,3.5) circle(2pt) 
(3,3.5) circle(2pt) 
(4,3.5) circle(2pt) 
(5,3.5) circle(2pt) 
(6,3.5) circle(2pt)

(0,2.5) circle(2pt) 
(1,2.5) circle(2pt) 
(3,2.5) circle(2pt) 
(4,2.5) circle(2pt) 
(5,2.5) circle(2pt) 
(6,2.5) circle(2pt)

(2,3.5) node{$\cdots$}
(2,2.5) node{$\cdots$}

(0,1.5) circle(2pt) 
(1,1.5) circle(2pt) 
(3,1.5) circle(2pt) 
(4,1.5) circle(2pt) 
(5,1.5) circle(2pt)

(0.5,1) node{$\vdots$}
(4.5,1) node{$\vdots$}

(2,2.5) node{$\cdots$}
(2,2.5) node{$\cdots$}

(0,0) circle(2pt) 
(1,0) circle(2pt) 
(3,0) circle(2pt) 
(4,0) circle(2pt) 
(5,0) circle(2pt)

(2,1.5) node{$\cdots$}
(2,0) node{$\cdots$};

\draw

(3,-1) node{(A)}

(0,6)--(1.3,6)
(0,5)--(1.3,5)
(0,6)--(0,5)
(2.7,6)--(6,6)--(6,5)--(5,5)--(5,4.7)
%
(0,3.5)--(1.3,3.5)
(0,2.5)--(1.3,2.5)
(0,3.5)--(0,2.5)
(2.7,3.5)--(4,3.5)--(4,3.8)
(5,3.8)--(5,3.5)--(6,3.5)--(6,2.5)--(5,2.5)--(5,1.2)
(0,1.2)--(0,1.5)--(1.3,1.5)
(2.7,1.5)--(4,1.5)--(4,2.5)--(2.7,2.5)
(0,0.3)--(0,0)--(1.3,0)
(2.7,0)--(5,0)--(5,0.3);

 \begin{scope}[xshift=80mm]

\filldraw
(0,0) circle (2pt)
(1,0) circle (2pt)
(3,0) circle (2pt)
(4,0) circle (2pt)

(0,1.5) circle (2pt)
(1,1.5) circle (2pt)
(3,1.5) circle (2pt)
(4,1.5) circle (2pt)

(0,2.5) circle (2pt)
(1,2.5) circle (2pt)
(3,2.5) circle (2pt)
(4,2.5) circle (2pt)
(5,2.5) circle (2pt)

(0,4) circle (2pt)
(1,4) circle (2pt)
(3,4) circle (2pt)
(4,4) circle (2pt)
(5,4) circle (2pt)

(0,5) circle (2pt)
(1,5) circle (2pt)
(3,5) circle (2pt)
(4,5) circle (2pt)
(5,5) circle (2pt)

(0,6) circle (2pt)
(1,6) circle (2pt)
(3,6) circle (2pt)
(4,6) circle (2pt)
(5,6) circle (2pt)

(2,0) node{$\cdots$}
(2,1.5) node{$\cdots$}
(2,2.5) node{$\cdots$}
(2,4) node{$\cdots$}
(2,5) node{$\cdots$}
(2,6) node{$\cdots$}
(0.5,1) node{$\vdots$}
(3.5,1) node{$\vdots$}
(0.5,3.5) node{$\vdots$}
(3.5,3.5) node{$\vdots$};

\draw

(2.5,-1) node{(B)}
(4,0) arc(130:210:-4.7)

(0,0.3)--(0,0)--(1.3,0)
(2.7,0)--(4,0)
(5,6)--(2.7,6)
(1.3,6)--(0,6)--(0,5)--(1.3,5)
(2.7,5)--(5,5)--(5,4)--(2.7,4)
(1.3,4)--(0,4)--(0,3.7)
(5,2.8)--(5,2.5)--(2.7,2.5)
(1.3,2.5)--(0,2.5)--(0,1.5)--(1.3,1.5)
(2.7,1.5)--(4,1.5)--(4,1.2);

 \end{scope}
 
 \begin{scope}[xshift=160mm]
 
\filldraw

(0,0) circle(2pt)
(1,0) circle(2pt)
(2,0) circle(2pt)
(4,0) circle(2pt)
(5,0) circle(2pt)

(0,1) circle(2pt)
(1,1) circle(2pt)
(2,1) circle(2pt)
(4,1) circle(2pt)
(5,1) circle(2pt)

(0,2.5) circle(2pt)
(1,2.5) circle(2pt)
(2,2.5) circle(2pt)
(4,2.5) circle(2pt)
(5,2.5) circle(2pt)
(6,2.5) circle(2pt)

(0,3.5) circle(2pt)
(1,3.5) circle(2pt)
(2,3.5) circle(2pt)
(4,3.5) circle(2pt)
(5,3.5) circle(2pt)
(6,3.5) circle(2pt)

(0,5) circle(2pt)
(1,5) circle(2pt)
(2,5) circle(2pt)
(4,5) circle(2pt)
(5,5) circle(2pt)
(6,5) circle(2pt)

(0,6) circle(2pt)
(1,6) circle(2pt)
(2,6) circle(2pt)
(4,6) circle(2pt)
(5,6) circle(2pt)
(6,6) circle(2pt)

(0.5,4.5) node{$\vdots$}
(1.5,4.5) node{$\vdots$}
(4.5,4.5) node{$\vdots$}
(0.5,2) node{$\vdots$}
(1.5,2) node{$\vdots$}
(4.5,2) node{$\vdots$}

(3,3) node{$\cdots$}
(3,0) node{$\cdots$}
(3,5.5) node{$\cdots$};

\draw
(3,-1) node{(C)}

(4,4.7)--(4,6)--(6,6)--(6,5)--(5,5)--(5,4.7)
(5,3.8)--(5,3.5)--(6,3.5)--(6,2.5)--(5,2.5)--(5,2.2)

(5,1.3)--(5,0)--(3.7,0)
(2.3,0)--(0,0)--(0,1.3)
(1,1.3)--(1,1)--(2,1)--(2,1.3)
(0,4.7)--(0,6)--(1,6)--(1,4.7)
(2,4.7)--(2,6)--(2.3,6)
(3.7,1)--(4,1)--(4,1.3)

(0,2.2)--(0,3.8)
(1,2.2)--(1,3.8)
(2,2.2)--(2,3.8)
(4,2.2)--(4,3.8);

  \end{scope}
  
   \begin{scope}[xshift=240mm]
   
   \filldraw

(0,0) circle(2pt)
(1,0) circle(2pt)
(2,0) circle(2pt)
(4,0) circle(2pt)
(5,0) circle(2pt)

(0,1) circle(2pt)
(1,1) circle(2pt)
(2,1) circle(2pt)
(4,1) circle(2pt)
(5,1) circle(2pt)

(0,2.5) circle(2pt)
(1,2.5) circle(2pt)
(2,2.5) circle(2pt)
(4,2.5) circle(2pt)
(5,2.5) circle(2pt)
(6,2.5) circle(2pt)

(0,3.5) circle(2pt)
(1,3.5) circle(2pt)
(2,3.5) circle(2pt)
(4,3.5) circle(2pt)
(5,3.5) circle(2pt)
(6,3.5) circle(2pt)

(0,5) circle(2pt)
(1,5) circle(2pt)
(2,5) circle(2pt)
(4,5) circle(2pt)
(5,5) circle(2pt)
(6,5) circle(2pt)

(0,6) circle(2pt)
(1,6) circle(2pt)
(2,6) circle(2pt)
(4,6) circle(2pt)
(5,6) circle(2pt)
(6,6) circle(2pt)

(0.5,4.5) node{$\vdots$}
(1.5,4.5) node{$\vdots$}
(4.5,4.5) node{$\vdots$}
(0.5,2) node{$\vdots$}
(1.5,2) node{$\vdots$}
(4.5,2) node{$\vdots$}

(3,3) node{$\cdots$}
(3,0) node{$\cdots$}
(3,5.5) node{$\cdots$};

\draw

(3,-1) node{(D)}
(5,0) arc(130:210:-4.7)

(2.3,0)--(0,0)--(0,1.3)
(5,3.8)--(5,3.5)--(6,3.5)--(6,2.5)--(5,2.5)--(5,2.2)

(0,2.2)--(0,3.8)
(1,2.2)--(1,3.8)
(2,2.2)--(2,3.8)
(4,2.2)--(4,3.8)

(0,4.7)--(0,6)--(1,6)--(1,4.7)
(2,4.7)--(2,6)--(2.3,6)
(1,1.3)--(1,1)--(2,1)--(2,1.3)
(3.7,1)--(4,1)--(4,1.3)
(4,4.7)--(4,6)--(3.7,6)

(3.7,0)--(5,0)
(6,6)--(5,6)--(5,5)--(6,5)--(6,4.7);

    \end{scope}

\end{tikzpicture}
\end{center}
\caption{Snake necklaces} \label{snakes}
\end{figure}


\end{proof}

Next, we explicitly construct an $(a,b)$-necklace of length $3a+b$ when $2a \leq b$. To that end, we string together residue classes $\pmod{a}$ in a particular manner; see Figure~\ref{3a+b} for an example of this pattern. 
 
 \begin{figure}[h!]
\begin{center}
\begin{tikzpicture}[scale=0.75]

\draw
(0,1) node{20}
(1,1) node{23}
(2,1) node{26}
(0,0) node{0}
(1,0) node{3}
(2,0) node{6}
(3,0) node{9}
(4,0) node{12}
(5,0) node{15}
(6,0) node{18}
(7,0) node{21}
(8,0) node{24}
(9,0) node{27}
(7,-1) node{1}
(8,-1) node{4}
(9,-1) node{7}
(10,-1) node{10}
(11,-1) node{13}
(12,-1) node{16}
(13,-1) node{19}
(14,-1) node{22}
(15,-1) node{25}
(16,-1) node{28}
(14,-2) node{2}
(15,-2) node{5}
(16,-2) node{8}
(17,-2) node{11}
(18,-2) node{14}
(19,-2) node{17};

\draw[-] (0,0.7)--(0,0.3);
\draw[-] (0.3,0)--(0.7,0);
\draw[-] (1,0.3)--(1,0.7);
\draw[-] (1.3,1)--(1.7,1);
\draw[-] (2,0.7)--(2,0.3);
\draw[-] (2.3,0)--(2.7,0);
\draw[-] (3.3,0)--(3.7,0);
\draw[-] (4.3,0)--(4.7,0);
\draw[-] (5.3,0)--(5.7,0);
\draw[-] (6.3,0)--(6.7,0);

\draw[-] (7,-0.3)--(7,-0.7);
\draw[-] (7.3,-1)--(7.7,-1);
\draw[-] (8,-0.7)--(8,-0.3);
\draw[-] (8.3,0)--(8.7,0);
\draw[-] (9,-0.3)--(9,-0.7);

\draw[-] (9.3,-1)--(9.7,-1);
\draw[-] (10.3,-1)--(10.7,-1);
\draw[-] (11.3,-1)--(11.7,-1);
\draw[-] (12.3,-1)--(12.7,-1);
\draw[-] (13.3,-1)--(13.7,-1);

\draw[-] (14,-1.3)--(14,-1.7);
\draw[-] (14.3,-2)--(14.7,-2);
\draw[-] (15,-1.7)--(15,-1.3);
\draw[-] (15.3,-1)--(15.7,-1);
\draw[-] (16,-1.3)--(16,-1.7);

\draw[-] (16.3,-2)--(16.7,-2);
\draw[-] (17.3,-2)--(17.7,-2);
\draw[-] (18.3,-2)--(18.7,-2);

\draw (19,-1.7) arc (62:100.2:29);

\end{tikzpicture}
\end{center}
\caption{Stringing together residue classes$\pmod{3}$ in a $(3,20)$-necklace of length 29.}
\label{3a+b}
\end{figure}
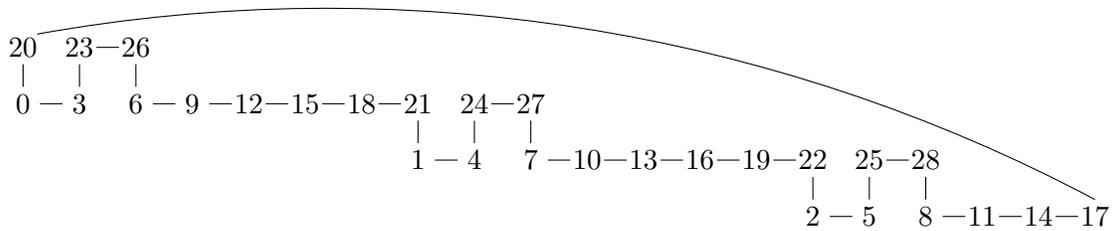

\begin{lemma} \label{L:3a+b}
Let $a,b$ be positive coprime integers with $a \ge 2$ and $2a \le b$. Then there exists an $(a,b)$-necklace of length $3a+b$. 
\end{lemma}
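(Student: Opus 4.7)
The plan is to construct the cycle explicitly by stringing together the residue classes modulo $a$, as suggested by Figure~\ref{3a+b}. Writing $b = qa + c$ with $0 < c < a$, note that $q \ge 2$ since $b \ge 2a$, and $\gcd(a,c) = 1$ since $\gcd(a,b) = 1$.

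First I would partition the bead set $\{0, 1, \ldots, 3a+b-1\}$ into its $a$ residue classes modulo~$a$. An elementary count shows that classes with representative $r < c$ contain $q+4$ beads while classes with $r \geq c$ contain $q+3$ beads, and within any class successive beads differ by $a$. A second observation is that a jump of $-b$ shifts residue class by $-c \equiv a-c \pmod{a}$; since $\gcd(a, a-c) = 1$, iterating this shift produces a permutation of the classes in the cyclic order $r_0 = 0,\ r_1 = a-c,\ r_2 = 2(a-c),\ \ldots,\ r_{a-1} \equiv c \pmod{a}$ visiting each class exactly once.

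With these observations in hand, I assemble the cycle in pieces. It opens with an initial zigzag $b, 0, a, b+a, b+2a$ using jumps $-b, +a, +b, +a$, consuming the three largest beads of class $c$ together with the two smallest of class~$0$. For each $k = 1, \ldots, a-1$, the endpoint $e$ of the preceding long run is followed by a four-bead transition zigzag $e - b,\ e - b + a,\ e + a,\ e + 2a$ (jumps $-b, +a, +b, +a$), which consumes the two smallest beads of class $r_k$ and the two largest of class $r_{k-1}$; after this, a single $-b$ jump launches a long run through the middle beads of class $r_k$ by successive $+a$ steps. A final $+a$ jump closes the cycle at $c + (q-1)a \mapsto c + qa = b$.

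The main obstacle is the bookkeeping: verifying that each bead in $\{0, 1, \ldots, 3a+b-1\}$ appears exactly once and that every intermediate bead lies in this range. Both checks reduce to elementary inequalities in $a$, $c$, and $q$, with slack supplied by $q \ge 2$. A mild degeneracy arises when $q = 2$: then $c + 2a = b$ coincides with the starting bead, so the long run of class $c$ is empty and the closing step is instead the $-b$ jump at the tail of the last transition zigzag, which itself lands on $b$.
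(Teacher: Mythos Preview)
Your construction is the same as the paper's: your residues $r_k = k(a-c)\bmod a$ coincide with the paper's $t_k\equiv -bk\pmod a$, and stringing together your zigzags and long runs reproduces exactly the concatenation $S_0S_1\cdots S_{a-1}$ (just sliced into pieces at slightly different points). One expository slip worth fixing: when $k=1$ you invoke ``the endpoint $e$ of the preceding long run,'' but you never described a long run for class $r_0=0$; after the initial five-bead zigzag you need the same ``single $-b$ jump, then $+a$ steps through the middle beads'' that you spell out for $k\ge 1$, running $2a,3a,\ldots,(q+1)a$ --- without this the $q$ middle beads of class~$0$ are never visited. With that sentence inserted, your argument and the paper's are identical.
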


\begin{proof}  
For all $i \ge 0$, put $t_i \equiv -bi \pmod{a}$ with $0 \leq t_i \leq a-1$. Let $k_i$ be the unique integer such that $b \leq t_i + k_ia < b+a$. Since $t_{i+1} + b \equiv t_i \pmod{a}$ and $b \le t_{i+1} + b \le b+a-1$, we see that $t_{i+1} + b = t_i + k_ia$ for all $i \ge 0$. Moreover, $k_i a \ge b - t_i \ge 2a - (a-1) > a$, so $k_i \ge 2$.  

For each $i \geq 0$, define the sequence 
\[ S_i  : \quad t_i+b \, , \ t_i \, , \ t_i+a \, , \ t_i+a+b \, , \ t_i+2a+b \, , \quad t_i+2a \, , \ t_i+3a \, , \ \ldots \, , \ t_i+(k_i-1)a \]
of length $k_i + 3$; here, the terms starting at $t_i+2a$ are omitted when $k_i = 2$, leaving only the first five terms. Then all the terms in $S_i$ are integers between $0$ and $3a+b-1$.

We claim that the concatenation $S = S_0 S_1 \ldots S_{a-1}$ is an $(a,b)$-necklace of length $3a+b$. To see that all the terms in $S$ are distinct, note that every term in $S$ belongs to a residue class of the form $t_i \pmod{a}$ or $t_i+ b \pmod{a}$ for some $i \in \{ 0, 1, \ldots , a-1 \}$. Let $i, j$ be integers with $0 \leq i, j \leq a-1$. If $t_j \equiv t_i \pmod{a}$, then $bj \equiv bi \pmod{a}$, which implies $i = j$ as $\gcd(a,b) = 1$. Similarly, $t_j \equiv t_i+b \pmod{a}$ forces $i = j$ or $j = i+1$; in the latter case, $t_{i+1} = t_i - b + k_ia \neq t_i$. 

The length of $S$ is
\[ \sum_{i=0}^{a-1} (k_i+3) 
    = \left ( \sum_{i=0}^{a-1} \frac{t_{i+1} - t_i + b}{a} \right ) + 3a 
    = \frac{t_a - t_0}{a} + b + 3a = b+3a \ , \]
as $t_0 = t_a = 0$. Hence, $S$ is a permutation of the integers $0, 1, \ldots , 3a+b-1$. Finally, any two neighbours in $S_i$ have absolute difference $a$ or $b$. For $0 \leq i \leq a-1$, the first term in $S_{i+1}$ is $t_{i+1} + b = t_i + k_ia$ which has difference $a$ from the last term $t_i+(k_i-1)a$ in $S_i$; since $S_a = S_0$. This proves our claim.

\end{proof} 

Note that the construction of the $(a,b)$-necklace of length $3a+b$ in Lemma~\ref{L:3a+b} relies crucially on the fact that $k_i \geq 2$ for all $i$, which was guaranteed by the inequality $2a \le b$. The approach fails if $k_i = 1$ for some $i$, since $t_{i+1} + b = t_i + a$ in this case, so $S_i$ and $S_{i+1}$ are no longer disjoint. For example, it is easy to check that there is no $(2,3)$ necklace of length~9. 

If $a \geq 2$, then two $(a,b)$-necklaces can be combined to create a longer $(a,b)$-necklace by adding the length of one necklace to all the beads in the other and then gluing along a pair of suitable links. Specifically, two $(a,b)$-necklaces $X$ and $Y$ of respective lengths $m$ and $n$ can produce an $(a,b)$-necklace of length $m+n$ if $X$ contains two adjacent beads $x, x'$ whose difference is $\pm a$ such that $n-b+x,n-b+x'$ are adjacent beads in $Y$. In this case, the beads in $X$ are shifted by~$n$ and then the two necklaces are glued together at the four specified beads, with $n+x$ placed adjacent to $n-b+x$ and $n+x'$ next to $n-b+x'$ in the new necklace of length~$m+n$. Figure~\ref{glued} shows two $(2,7)$-necklaces of lengths 9 and 13, respectively. Beads $5, 3$ are adjacent in the length 9 necklace (on the left) and beads $11, 9$ are adjacent in the length 13 necklace (on the right). In Figure~\ref{glued2}, all the beads in the length 9 necklace are shifted up by 13; in particular, 5, 3 are shifted to 18, 16 and attached to 11, 9, respectively, to form a $(2,7)$-necklace of length~22.

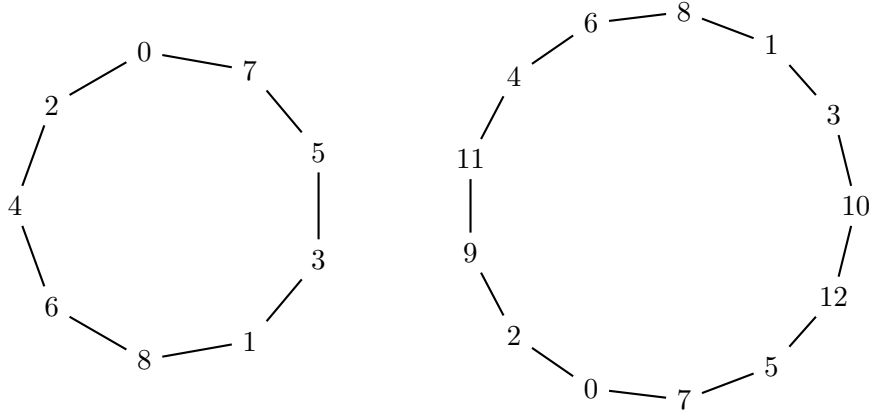
\begin{figure}[h!]
\begin{center}
\begin{tikzpicture}[-,thick,scale=1.3]
\node (10) at (5*360/9 + 360/18: 1.6cm) {6};
\node (11) at (4*360/9 + 360/18: 1.6cm) {4};
\node (12) at (3*360/9 + 360/18: 1.6cm) {2};
\node (13) at (2*360/9 + 360/18: 1.6cm) {0};
\node (14) at (1*360/9 + 360/18: 1.6cm) {7};
\node (15) at (0*360/9 + 360/18: 1.6cm) {5};
\node (16) at (-1*360/9 + 360/18: 1.6cm) {3};
\node (17) at (-2*360/9 + 360/18: 1.6cm) {1};
\node (18) at (-3*360/9 + 360/18: 1.6cm) {8};

\draw[-] (11) to (10);
\draw[-] (12) to (11);
\draw[-] (13) to (12);
\draw[-] (14) to (13);
\draw[-] (15) to (14);
\draw[-] (16) to (15);
\draw[-] (17) to (16);
\draw[-] (18) to (17);
\draw[-] (10) to (18);

    \begin{scope}[xshift=50mm]
\node (20) at (5*360/13 : 2cm) {4};
\node (21) at (4*360/13 : 2cm) {6};
\node (22) at (3*360/13 : 2cm) {8};
\node (23) at (2*360/13 : 2cm) {1};
\node (24) at (1*360/13: 2cm) {3};
\node (25) at (0*360/13 : 2cm) {10};
\node (26) at (-1*360/13 : 2cm) {12};
\node (27) at (-2*360/13 : 2cm) {5};
\node (28) at (-3*360/13 : 2cm) {7};
\node (29) at (-4*360/13 : 2cm) {0};
\node (30) at (-5*360/13 : 2cm) {2};
\node (31) at (-6*360/13 : 2cm) {9};
\node (32) at (-7*360/13 : 2cm) {11};

\draw[-] (21) to (20);
\draw[-] (22) to (21);
\draw[-] (23) to (22);
\draw[-] (24) to (23);
\draw[-] (25) to (24);
\draw[-] (26) to (25);
\draw[-] (27) to (26);
\draw[-] (28) to (27);
\draw[-] (29) to (28);
\draw[-] (30) to (29);
\draw[-] (31) to (30);
\draw[-] (32) to (31);
\draw[-] (20) to (32);
    \end{scope}


\end{tikzpicture}
\end{center} 
\caption{Two $(2,7)$-necklaces of respective lengths 9 and 13.}
\label{glued}
\end{figure}

\begin{figure}[h!]
\begin{center}
\begin{tikzpicture}[-,thick,scale=1.3]
\node (10) at (5*360/9 + 360/18: 1.6cm) {19};
\node (11) at (4*360/9 + 360/18: 1.6cm) {17};
\node (12) at (3*360/9 + 360/18: 1.6cm) {15};
\node (13) at (2*360/9 + 360/18: 1.6cm) {13};
\node (14) at (1*360/9 + 360/18: 1.6cm) {20};
\node (15) at (0*360/9 + 360/18: 1.6cm) {18};
\node (16) at (-1*360/9 + 360/18: 1.6cm) {16};
\node (17) at (-2*360/9 + 360/18: 1.6cm) {14};
\node (18) at (-3*360/9 + 360/18: 1.6cm) {21};

\draw[-] (11) to (10);
\draw[-] (12) to (11);
\draw[-] (13) to (12);
\draw[-] (14) to (13);
\draw[-] (15) to (14);
\draw[-] (17) to (16);
\draw[-] (18) to (17);
\draw[-] (10) to (18);

    \begin{scope}[xshift=50mm]
\node (20) at (5*360/13 : 2cm) {4};
\node (21) at (4*360/13 : 2cm) {6};
\node (22) at (3*360/13 : 2cm) {8};
\node (23) at (2*360/13 : 2cm) {1};
\node (24) at (1*360/13: 2cm) {3};
\node (25) at (0*360/13 : 2cm) {10};
\node (26) at (-1*360/13 : 2cm) {12};
\node (27) at (-2*360/13 : 2cm) {5};
\node (28) at (-3*360/13 : 2cm) {7};
\node (29) at (-4*360/13 : 2cm) {0};
\node (30) at (-5*360/13 : 2cm) {2};
\node (31) at (-6*360/13 : 2cm) {9};
\node (32) at (-7*360/13 : 2cm) {11};

\draw[-] (21) to (20);
\draw[-] (22) to (21);
\draw[-] (23) to (22);
\draw[-] (24) to (23);
\draw[-] (25) to (24);
\draw[-] (26) to (25);
\draw[-] (27) to (26);
\draw[-] (28) to (27);
\draw[-] (29) to (28);
\draw[-] (30) to (29);
\draw[-] (31) to (30);
\draw[-] (20) to (32);
    \end{scope}

\draw[-] (16) to (31) ;
\draw[-] (32) to (15) ;

\end{tikzpicture}
\end{center} 
\caption{The two $(2,7)$-necklaces of Figure \ref{glued} glued together to form a $(2,7)$-necklace of length 22, with all the beads in the length 9 necklace shifted up by 13.}
\label{glued2}
\end{figure}
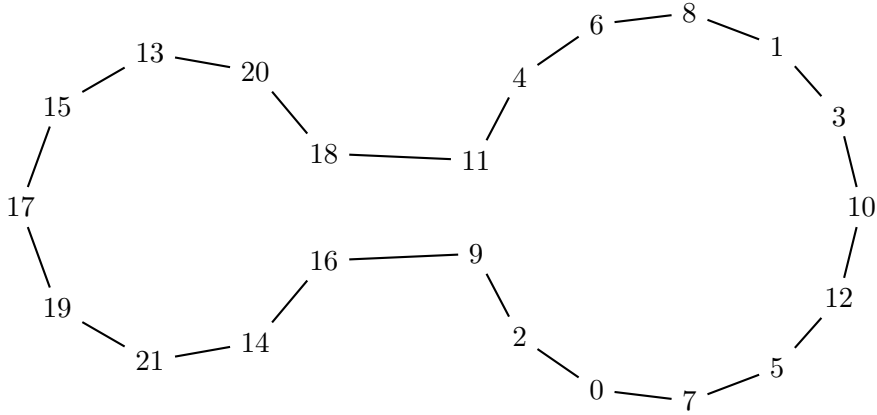

\begin{lemma} \label{L:glue} 
Let $X$ and $Y$ be two $(a,b)$-necklaces of respective lengths $m$ and $n$. If $a \ge 2$, then there exists an $(a,b)$-necklace of length $m+n$. 
\end{lemma}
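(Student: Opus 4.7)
The plan is to verify the gluing criterion described before the lemma: to find an index $i \in \{0, 1, \ldots, b-a-1\}$ with $(i, i+a)$ an edge of $X$ and $(n-b+i, n-b+i+a)$ an edge of $Y$. Once such $i$ is produced, the shift-and-glue construction yields an $(a, b)$-necklace of length $m+n$.

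I would first handle the easy case $\min(m, n) < 2b$. Assume $m < 2b$; the case $n < 2b$ is symmetric after shifting $Y$ by $m$ rather than $X$ by $n$. Then bead $b-1$ in $X$ has only the two graph-neighbors $b-1-a$ and $b-1+a$, since $b-1-b < 0$ and $b-1+b = 2b-1 \geq m$. Both must therefore be cycle edges, so $(b-a-1, b-1) \in X$. The matching edge $(n-1-a, n-1)$ in $Y$ is likewise forced because bead $n-1$ in $Y$ has only the two graph-neighbors $n-1-a$ and $n-1-b$ (its $+a$ and $+b$ neighbors are out of range). The index $i = b-a-1$ thus witnesses the gluing criterion.

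In the remaining case $m, n \geq 2b$, I would argue by contradiction via a forward chain of forced edges. Assume the criterion fails at every $i$. Fix $s \in \{0, \ldots, a-1\}$. Bead $s$ in $X$ has graph-degree $2$, forcing $(s, s+a) \in X$, and failure at $i = s$ gives $(n-b+s, n-b+s+a) \notin Y$. Provided $s + 2a \leq b - 1$, bead $n-b+s+a$ in $Y$ has graph-degree $3$ (its graph-neighbors are $n-b+s$, $n-b+s+2a$, and $n-2b+s+a$, the last valid by $n \geq 2b$); excluding the non-edge forces the remaining two to be cycle edges, so $(n-b+s+a, n-b+s+2a) \in Y$. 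Iterating alternately between $X$ and $Y$, one derives edges $E_k$ connecting $s+ka$ and $s+(k+1)a$---in $X$ when $k$ is even, in $Y$ when $k$ is odd---for as long as the pivotal intermediate bead retains graph-degree $3$, which persists while $s + (k+2)a \leq b - 1$.

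At step $k_c(s) := \lfloor (b-1-s)/a \rfloor - 1$, the relevant $Y$-bead $n-b+s+(k_c+1)a$ drops to graph-degree $2$ since its would-be $+a$ neighbor falls out of range; if $k_c(s)$ is even, the $Y$-non-edge derived from failure at $i = s + k_c a$ contradicts the degree-$2$ forcing requiring this edge to lie in the cycle. Writing $b = qa + r$ with $1 \leq r \leq a-1$ (forced by $\gcd(a, b) = 1$ and $a \geq 2$), one has $\lfloor (b-1-s)/a \rfloor = q$ for $s \in \{0, \ldots, r-1\}$ and $q-1$ for $s \in \{r, \ldots, a-1\}$, so at least one $s$ yields an even $k_c(s)$ and the contradiction closes. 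The main technical obstacle is verifying at each intermediate step that the bead being forced truly has graph-degree exactly $3$---a routine but case-heavy computation leaning on $m, n \geq 2b$ and $\gcd(a, b) = 1$.
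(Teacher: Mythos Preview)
Your argument is correct. Both your proof and the paper's exploit the same underlying mechanism: chase along an arithmetic progression with step $a$, using degree constraints (beads near the ends of $\{0,\ldots,m-1\}$ or $\{0,\ldots,n-1\}$ have degree $2$ or $3$ in $G_{a,b}$) to force alternating edges and non-edges in $X$ and $Y$ until a compatible gluing pair is found. The organization, however, differs in two respects worth noting.

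First, the paper works along the single residue class $0 \pmod a$, using the beads $x_i = (q-i)a$ in $X$ and $y_i = n-b+x_i$ in $Y$; it anchors the argument at $y_0 = n-r$, which has degree $2$ in $Y$, and then runs a maximality argument on the set $I = \{\,i : x_{2i},x_{2i+1} \text{ non-adjacent in } X,\ y_{2i},y_{2i+1} \text{ adjacent in } Y\,\}$. Taking $k = \max I$ and unwinding two more steps immediately produces the required pair. You instead anchor at a small bead $s$ in $X$ (degree $2$), iterate upward, and close by choosing the residue $s \in \{0,\ldots,a-1\}$ so that the terminal index $k_c(s)$ has the right parity; this is where you genuinely need $\gcd(a,b)=1$ (to guarantee $r \ge 1$ so that both parities of $k_c$ occur among the residues). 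The paper's maximality trick sidesteps the parity issue entirely and stays in a single residue class, which is slightly slicker.

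Second, you split off the easy case $\min(m,n) < 2b$ up front, whereas the paper treats all $m,n \ge a+b$ uniformly (the degree bounds it needs are weaker and do not require $m,n \ge 2b$). Your separation is harmless and makes the main-case degree checks cleaner, at the cost of a small extra argument.
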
 
\begin{proof} 
Write $b = qa + r$ where $q,r$ are positive integers with $r \le a-1$. Put
\[ x_i = (q-i)a  , \quad y_i = n - b + x_i \qquad (0 \leq i \leq q) \ , \]
and consider the $x_i$ as beads in $X$ and the $y_i$ as beads in $Y$. Since $x_i < b$ and $y_i \ge n-b$, the neighbours of $x_i$ in $X$ are two among $x_i+b$, $x_i+a$, $x_i-a = x_{i+1}$ and the neighbours of $y_i$ in $Y$ are two among $y_i-b$, $y_i+a$, $y_i-a = y_{i+1}$. To prove the lemma, it suffices to establish the existence of an index $i$ with $0 \leq i \leq q-1$ such that $x_i, x_{i+1}$ are adjacent in~$X$ and $y_i, y_{i+1}$ are adjacent in $Y$, for $0 \leq i \le q-1$.

In $Y$, $y_0 = n-b+qa = n-r$ has neighbours $y_0-b$ and $y_0-a = y_1$. If $x_0$ and $x_1$ are adjacent in $X$, then the claim is proved, so suppose that this is not the case. Define the set of integers
\[ I = \{ i \mid 0 \le i \le \left \lfloor \frac{q-1}{2} \right \rfloor, \mbox{ $x_{2i}$ is not adjacent to $x_{2i+1}$ in $X$, $y_{2i}$ is adjacent to $y_{2i+1}$ in $Y$} \} \, . \]
Then $0 \in I$. Let $k = \max \{ i \mid i \in I \} \ge 0$ be the largest element of $I$. Then $y_{2k}$,  $y_{2k+1}$ are adjacent in $Y$ and $x_{2k}, x_{2k+1}$ are not adjacent in $X$. Thus, $x_{2k+1}$ has neighbours $x_{2k+1}+b$ and $x_{2k+1}+a = x_{2k+2}$. If $y_{2k+1}$ and $y_{2k+2}$ are neighbours in $Y$, then the lemma is again proved, so suppose they are non-adjacent beads in $Y$. Then $y_{2k+2}$ has neighbours $y_{2k+2}-b$ and $y_{2k+2}-a =  y_{2k+3}$. Since $k+1 \notin I$, this forces $x_{2k+2}$ and $x_{2k+3}$ to be adjacent in $X$, which once again proves the lemma. \end{proof}

Note that this gluing technique does not work in all cases. For example, the $(1,5)$-necklace $0,1,6,7,2,3,8,9,4,5$ of length~10 cannot be glued to itself in this manner, since it contains no adjacent beads $x,x'$ such that $5+x, 5+x'$ are also adjacent. The requirement that $a \ge 2$ in Lemma \ref{L:glue} is a sufficient but not a necessary condition. For example, the $(1,5)$-necklace $0,1,2,3,4,5$ can be glued to itself. In fact, any $(a.b)$-necklace of length $a+b$ can be glued to itself; see Theorem~\ref{expest}. 

We now have all the ingredients for our main existence result for $(a,b)$-necklaces.

\begin{theorem}\label{exist} 
Let $a$ and $b$ be positive coprime integers with $2a \leq b$. If $ab$ is even, then there exists an $(a,b)$-necklace of length $n$ for all $n \ge (a+b-1)(3a+b-1)$. If $ab$ is odd, then there exists an $(a,b)$-necklace of even length $n$ for all $n \ge (a+b-2)(3a+b-2)/2$.
\end{theorem}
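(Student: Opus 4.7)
The plan is to split into two cases: $a=1$, and $a \ge 2$. The case $a=1$ is handled entirely by Lemma~\ref{a=1}, as a short arithmetic check shows that the theorem's threshold dominates (or, when $b=3$, is equivalent via the even-length condition to) the bound $n \ge 2b$ appearing in that lemma. So I would dispose of $a=1$ in a few lines at the start.

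For $a \ge 2$, the strategy is to use Lemma~\ref{L:glue} to build necklaces of any length that is a non-negative integer combination of two building-block lengths, and then invoke the Sylvester--Frobenius theorem to identify which combinations are achievable. The two building blocks are the $(a,b)$-necklace of length $a+b$ (the cycle in the circulant graph mentioned at the start of Section~\ref{S:exist}) and the $(a,b)$-necklace of length $3a+b$ produced by Lemma~\ref{L:3a+b}. Because $a \ge 2$, Lemma~\ref{L:glue} applies to any ordered pair of $(a,b)$-necklaces, including two copies of the same one, so iterating the gluing procedure produces an $(a,b)$-necklace of every length of the form $\alpha(a+b) + \beta(3a+b)$ with $\alpha, \beta \in \mathbb{Z}_{\ge 0}$, not both zero.

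It remains to determine which integers admit such a representation. Set $d = \gcd(a+b,\,3a+b) = \gcd(a+b,\,2a)$; since $\gcd(a,b)=1$ we have $\gcd(a+b, a) = 1$, and therefore $d = \gcd(a+b,\,2)$. When $ab$ is even, $a+b$ is odd and $d=1$, so $a+b$ and $3a+b$ are coprime; by Sylvester--Frobenius every $n \ge (a+b-1)(3a+b-1)$ is a non-negative integer combination of $a+b$ and $3a+b$. When $ab$ is odd, $a+b$ is even and $d=2$; setting $p=(a+b)/2$ and $q=(3a+b)/2 = p+a$, any common divisor of $p$ and $q$ divides $a$ and hence divides $b$ as well (via $b = 2p - a$), forcing $\gcd(p,q)=1$. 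Applying Sylvester--Frobenius to $p$ and $q$ shows that every $n' \ge (p-1)(q-1) = (a+b-2)(3a+b-2)/4$ is representable as $\alpha p + \beta q$, and multiplying by $2$ gives the theorem's bound for even $n$.

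The substantive work has already been done in Lemmas~\ref{a=1}, \ref{L:3a+b}, and \ref{L:glue}; the present theorem is an assembly step with no serious obstacle. The items needing genuine attention are the two gcd computations (ensuring Sylvester--Frobenius applies) and verification that the stated thresholds match $(p-1)(q-1)$ exactly. The potentially tricky edge to double-check is the smallest odd-$b$ instance of $a=1$, where one must confirm the parity clause of the conclusion is compatible with the smaller bound provided by Lemma~\ref{a=1}.
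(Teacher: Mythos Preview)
Your proposal is correct and follows essentially the same approach as the paper: dispose of $a=1$ via Lemma~\ref{a=1}, and for $a\ge 2$ combine the length $a+b$ and length $3a+b$ necklaces via Lemma~\ref{L:glue}, then apply the Frobenius coin problem after computing $\gcd(a+b,3a+b)$. If anything, you are slightly more careful than the paper, which simply asserts ``Lemma~\ref{a=1} states this result for $a=1$'' without flagging the $b=3$ edge case (where the theorem's threshold $(b^2-1)/2=4$ actually undercuts Lemma~\ref{a=1}'s bound of $2b=6$, and the missing even length $n=4=a+b$ must be supplied by the circulant necklace).
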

\begin{proof} Lemma~\ref{a=1} states this result for $a=1$, so assume now that $a \geq 2$. Then there exists an $(a,b)$-necklace of length $a+b$ and one of length $3a+b$ by Lemma~\ref{L:3a+b}. By Lemma~\ref{L:glue}, there exists an $(a,b)$-necklace of length $x(a+b) + y(3a+b)$ for all $x,y \geq 0$.

If $ab$ is even, then $a+b$ and $3a+b$ are coprime, so by the solution to the well-known Frobenius Coin Problem~\cite[Theorem 2.1.1]{JR}, there exists an $(a,b)$-necklace of length $n$ for all $n \geq (a+b-1)(3a+b-1)$. If $ab$ is odd, then gcd$(a+b,3a+b) = 2$, so  there exists an $(a,b)$-necklace of even length $2m$ for all $m \geq \big ( (a+b)/2-1 \big ) \big ( (3a+b)/2-1 \big)$.
\end{proof}

The lower bounds in Theorem~\ref{exist} are far from tight. For example, the theorem asserts the existence of a $(2,3)$-necklace of any length $n \geq 32$, but in the next section, we will see that $(2,3)$-necklaces of any length $n \geq 10$ exist. 

We strongly believe that the restriction $2a \le b$ is not necessary to guarantee the existence of necklaces of any sufficiently large length. Numerical computations seem to support this assertion.

\begin{conjecture}\label{conje} 
Let $a$ and $b$ be positive coprime integers. Then there is a positive integer~$n_{a,b}$ such that an $(a,b)$-necklace of every length $n \geq n_{a,b}$ exists when $ab$ is even and an $(a,b)$-necklace of every even length $n \geq n_{a,b}$ exists when $ab$ is odd.
\end{conjecture}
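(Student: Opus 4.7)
The plan is to mirror the strategy of Theorem~\ref{exist}: build a handful of explicit base $(a,b)$-necklaces, glue them via Lemma~\ref{L:glue}, and conclude with the Frobenius coin problem. The case $a=1$ is already handled by Lemma~\ref{a=1}, and $2a\le b$ by Theorem~\ref{exist}, so the new work is confined to coprime pairs $(a,b)$ with $a\ge 2$ and $a<b<2a$. Write $b=a+r$ with $1\le r\le a-1$ and $\gcd(a,r)=1$. Since Lemma~\ref{L:glue} only requires $a\ge 2$, gluing remains available; what is missing is enough base lengths so that, together with the length-$(a+b)$ circulant necklace, the Frobenius coin problem yields every sufficiently large (even) integer.

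First I would try to salvage Lemma~\ref{L:3a+b}. Its failure mode when $b<2a$ is precisely that some parameters $k_i$ equal $1$, causing blocks $S_i$ and $S_{i+1}$ to overlap. My plan is to identify maximal runs of indices with $k_i=1$ and replace each such run, together with its flanking blocks, by a single extended zigzag that still traverses the relevant residue classes modulo $a$ without repetition. The residues of consecutive blocks satisfy $t_{i+1}\equiv t_i-r\pmod{a}$, and since $\gcd(a,r)=1$ they still cycle through all of $\mathbb{Z}/a\mathbb{Z}$; this suggests that a merged construction produces an $(a,b)$-necklace of some controllable length $L(a,b)$. I would then compute $\gcd(L(a,b),a+b)$ and show it equals $1$, or $2$ in the odd case. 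If a single new base length does not suffice, a second one could be generated by appending a short detour along an $a$-jump or by reversing the terminal zigzag, giving a short list of constructible lengths whose gcd meets the Frobenius hypothesis.

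The main obstacle I expect is the combinatorial bookkeeping when $r$ is close to $a$: long runs of $k_i=1$ produce large merged blocks, and verifying distinctness of beads together with correctness of neighbour differences becomes delicate. A secondary obstacle is that the snake constructions of Lemma~\ref{a=1} do not generalize cleanly to $a\ge 2$, because the columns of height $b$ are no longer internally connected by single $a$-steps once $a>1$; any generalization has to interleave residue classes consistently with the modular arithmetic. I therefore expect the final argument to require case analysis on the residue $r$ of $b$ modulo $a$, and the statement is left as a conjecture precisely because a uniform construction has so far eluded us. A genuinely new idea, such as a structural Hamiltonicity argument inside the $4$-regular graph $G_{a,b}(n)$ or an inductive step that passes from $(a,b)$-necklaces to $(a,b+a+b)$-necklaces, may ultimately be needed.
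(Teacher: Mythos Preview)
The statement you are addressing is Conjecture~\ref{conje}, and the paper does \emph{not} prove it; it is presented precisely as an open problem, supported only by numerical evidence and the partial result of Theorem~\ref{exist}. There is therefore no ``paper's own proof'' to compare against.

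Your write-up is honest about this: it is a strategy sketch, not a proof, and you yourself flag the decisive gap when you say that ``a uniform construction has so far eluded us'' and that ``a genuinely new idea \ldots\ may ultimately be needed.'' The concrete obstacles you identify are real. The repair of Lemma~\ref{L:3a+b} by merging runs of indices with $k_i=1$ into ``extended zigzags'' is plausible heuristics, but you give no actual construction, no verification that the merged blocks stay inside $\{0,\ldots,3a+b-1\}$, and no argument that the resulting length $L(a,b)$ has the right gcd with $a+b$. Without at least one worked family (say all $b=a+1$, or all $r=1$) the proposal does not move beyond the paper's own position. As written, this is a reasonable research plan for attacking the conjecture, but it is not a proof and should not be presented as one.
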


Recall that $N_{a,b}(n)$ denotes the number of distinct $(a,b)$-necklaces of length $n$, or equivalently, the number of Hamiltonian cycles in $G_{a,b}(n)$. We can apply our gluing idea above to necklaces whose length is a multiple of $a+b$ to obtain a lower bound on $N_{a,b}(n)$ in this case. 

\begin{theorem}\label{expest} Let $a$ and $b$ be positive coprime integers. Then for all positive integers $k$ we have
\[ N_{a,b}(k(a+b)) \geq \begin{cases} (b-a)^{k-1} & \text{if $2a \geq b$ or $k=1$} , \\ (b-a)(b-a-1)^{k-2} & \text{if $2a<b$ and $k \geq 2$} . \end{cases}\]
\end{theorem}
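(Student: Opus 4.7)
The plan is to build explicit $(a,b)$-necklaces of length $k(a+b)$ by iteratively gluing $k$ shifted copies of the unique $(a,b)$-necklace of length $a+b$ in a chain, parameterizing each gluing by a single integer and verifying that distinct parameter sequences yield distinct necklaces. Let $C$ denote the unique $(a,b)$-necklace of length $a+b$; as noted at the beginning of Section~\ref{S:exist}, $C$ coincides with the whole circulant graph on $\{0,\ldots,a+b-1\}$, so its edge set contains every possible difference-$a$ edge $\{u,u+a\}$ with $0\le u\le b-1$. I would partition $\{0,1,\ldots,k(a+b)-1\}$ into layers $L_j=\{j(a+b),\ldots,(j+1)(a+b)-1\}$ for $0\le j\le k-1$, each carrying a shifted copy of $C$. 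The case $k=1$ is trivial since $C$ itself counts as one necklace, so the main work is for $k\ge 2$.

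Starting from $Z_0=C$ on $L_0$, for each $j=1,\ldots,k-1$ I would attach a fresh copy of $C$ on $L_j$ to the existing necklace $Z_{j-1}$ via the gluing construction in Lemma~\ref{L:glue}. The $a\ge 2$ hypothesis in that lemma is only used to guarantee that compatible gluing edges exist in general; here compatibility will be verified directly, so the construction applies uniformly for $a\ge 1$. A choice of parameter $x_j\in\{0,1,\ldots,b-a-1\}$ simultaneously selects the bottom edge $\{j(a+b)+x_j,\,j(a+b)+x_j+a\}$ to remove from $L_j$, the top edge $\{(j-1)(a+b)+a+x_j,\,(j-1)(a+b)+2a+x_j\}$ to remove from $L_{j-1}$, and the two difference-$b$ bridge edges joining $L_{j-1}$ to $L_j$. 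The bound $x_j\le b-a-1$ is precisely what keeps both endpoints of the top edge inside $L_{j-1}$.

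For the $j$-th gluing to succeed, the top edge must still be present in $Z_{j-1}$. The only prior modification affecting $L_{j-1}$ is the bottom edge $\{(j-1)(a+b)+x_{j-1},\,(j-1)(a+b)+x_{j-1}+a\}$ removed at step $j-1$ (vacuous for $j=1$), and a direct comparison shows that the current top edge coincides with this previously removed one exactly when $x_{j-1}=a+x_j$. Thus at most one value of $x_j$ is ever forbidden, and such a forbidden value can actually occur only if some $x_{j-1}\in[a,b-a-1]$ is available, i.e., only if $2a<b$. Consequently the first gluing admits $b-a$ choices, while each subsequent gluing admits $b-a$ choices when $2a\ge b$ and at least $b-a-1$ choices when $2a<b$; multiplying yields the totals $(b-a)^{k-1}$ and $(b-a)(b-a-1)^{k-2}$ respectively. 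Distinctness follows because the two bridge edges between $L_{j-1}$ and $L_j$ are uniquely encoded by $x_j$, so different sequences produce different edge sets, and equivalent necklaces share an edge set. The hard part will be the careful bookkeeping in this feasibility step: one must confirm that the \emph{only} possible edge conflict is the single coincidence between the top edge currently being removed and the bottom edge removed one step earlier in the same layer $L_{j-1}$, ruling out interference from bridge edges or features of other layers, after which the case split $2a\ge b$ versus $2a<b$ falls out immediately.
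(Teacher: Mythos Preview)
Your proposal is correct and follows essentially the same approach as the paper: iteratively gluing shifted copies of the unique length-$(a+b)$ necklace, counting $b-a$ choices for the first insertion and observing that at most one choice is blocked at each subsequent step (and only when $2a<b$). Your parameterization by $x_j$ and the conflict condition $x_{j-1}=a+x_j$ are exactly the paper's argument in different coordinates, and your distinctness justification via bridge edges is a slightly more explicit version of what the paper leaves implicit.
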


\begin{proof} Let $X$ be the unique $(a,b)$-necklace of length $a+b$. For any $m \in \mathbb{N}$, let $X+m$ denote the necklace obtained from $X$ by adding $m$ to each bead. Our aim is to determine the number of distinct ways in which $X + (k-1)(a+b)$ can be inserted into a necklace of length $(k-1)(a+b)$ via gluing to yield distinct necklaces of length $k(a+b)$. The result will then follow inductively. 

We first consider the case $k = 2$, where $X$ is glued to itself. Inserting $X+a+b$ into $X$ requires a link $x+a,x+2a$ in $X$ and a link $x+a+b,x+2a+b$ in $X+a+b$; the gluing process then attaches $x+a$ to $x+a+b$ and $x+2a$ to $x+2a+b$; see Figure~\ref{glued3}. The existence of such a pair of links is equivalent to the subsequence $x,x+a,x+2a$ appearing in $X$. Thus, the number of ways to insert $X+a+b$ into $X$ is bounded below by the number of subsequences of the form $x,x+a,x+2a$ appearing in $X$. It is easy to see that $X$ has the following adjacencies:

\begin{itemize} \itemsep -3pt
\item[] Beads $x$ with $0 \le x < a$ have neighbours $x+a$ and $x+b$;
\item[] Beads $x$ with $a \le x < b$ have neighbours $x+a$ and $x-a$;
\item[] Beads $x$ with $b \le x < a+b$ have neighbours $x-a$ and $x-b$.
\end{itemize}
Therefore, for any $x$ with $0 \le x < b-a$, the subsequence $x,x+a,x+2a$ appears in $X$. It follows that $X+a+b$ can be inserted into $X$ in at least $b-a$ different ways, yielding at least $b-a$ distinct $(a,b)$-necklaces of length $b-a$. 

For the case $k = 3$, let $Y$ be an $(a,b)$-necklace of length $2(a+b)$ obtained via the above construction, and let $x+a,x+a+b$ and $x+2a,x+2a+b$ be the links in $Y$ created by inserting $X + a+b$ into $X$.
To insert $X+2(a+b)$ into $Y$, there must exist an integer $y \in [a+b,2b-1]$ such that $y+a,y+2a$ is a link in $Y$ and $y+a+b,y+2a+b$ is a link in $X+2(a+b)$. This requires $y-a-b,y-b,y+a-b$ to be a subsequence in $X$. By our reasoning above, there are $b-a$ such subsequences in $X$. However, we must exclude the case $(y+a,y+2a) = (x+a+b,x+2a+b)$ since this is the link that was broken when inserting $X+a+b$ into $X$. If $(y+a,y+2a) = (x+a+b,x+2a+b)$, then the sequence $x,x+a,x+2a,x+3a$ appears in $X$, forcing $3a<a+b$. Therefore, if $2a \geq b$, then $X+2(a+b)$ can be inserted into $Y$ in at least $b-a$ different ways, whereas if $2a<b$, there are $b-a-1$ distinct possibilities for such an insertion. It follows that there are at least $(b-a)^2$ necklaces of length $3(a+b)$ when $2a \geq b$ and at least $(b-a)(b-a-1)$ such necklaces when $2a \geq b$. The theorem now follows by inductively inserting shifts of $X$ into itself. 
\end{proof}

\begin{figure}[h!]
    \centering
    \begin{subfigure}[t]{0.6\textwidth}
        \centering
     \begin{tikzpicture}[scale=0.6]
\draw (0,1.5) node {$x$};
\draw (0,0) node {$x$+$a$};
\draw (3,1.5) node {$x$+$a$};
\draw (3,0) node {$x$+$2a$};

\draw[-]  (-2,1.5)--(-1,1.5) (1,1.5)--(2,1.5) (4,1.5)--(5,1.5); 
\draw[-]  (-2,0)--(-1,0) (1,0)--(2,0) (4,0)--(5,0);
    \end{tikzpicture}
        \caption{Two fragments of the $(a,b)$-necklace of length $a+b$.}
    \end{subfigure}%
    ~ 
    \begin{subfigure}[t]{0.4\textwidth}
        \centering
     \begin{tikzpicture}[scale=0.6]
\draw (0.5,1.5) node {$x$+$a$+$b$};
\draw (0.5,0) node {$x$+$a$};
\draw (4,1.5) node {$x$+$2a$+$b$};
\draw (4,0) node {$x$+$2a$};

\draw[-]  (-2,1.5)--(-1,1.5) (0.5,1.1)--(.5,0.4) (5.5,1.5)--(6.5,1.5); 
\draw[-]  (-2,0)--(-1,0) (4,1.1)--(4,0.4) (5.5,0)--(6.5,0);

    \end{tikzpicture}
        \caption{Gluing along the two fragments.}
    \end{subfigure}
    \caption{}
    \label{glued3}
\end{figure}

Tables~\ref{table1} and~\ref{table2} suggest that the bound in Theorem~\ref{expest} is sharp for $k = 2$ except when $a = 1$ and $b$ is odd, where we always obtained $N_{1,b}(2(b+1)) = b$. This is because an additional $(1,b)$-necklace arises from snake pattern (D) in Figure~\ref{snakes}. In this necklace, bead $b+1$ is adjacent to $b$ and $2b+1$, whereas in every necklace obtained from the gluing construction, $b+1$ is adjacent to $b+2$ and $2b+1$. 

Theorem~\ref{expest} shows that $N_{a,b}(n)$ is unbounded for all permissible pairs $a,b$ unless $(a,b) = (1,3)$ or $a+1 = b$. Due to the resemblance of $G_{a,a+1}(n)$ to a grid graph, the same result is also almost certainly true when $b = a+1$ except for the case $(a,b) = (1,2)$.

In the following section we will determine $N_{a,b}(n)$ explicitly for some pairs~$(a,b)$.


\section{Counting $(a,b)$-Necklaces} \label{S:count}

In this section, we derive linear recurrence relations for $N_{a,b}(n)$ when $(a,b) \in \{(1,2)$, $(1,3)$,$(2,3)$,$(1,4)\}$. We assume $n \ge a+b$ throughout, as $N_{a,b}(n) = 0$ for $n < a+b$.

Our counting technique makes use of the following construction. Let $k$ be a positive integer and $H$ a path or a cycle whose vertices are non-negative integers. For brevity, let $H+k$ denote the path or cycle obtained by adding $k$ to each vertex in $H$; $H-k$ is similarly defined (and may have negative vertices). Now define $S_k(H)$ to be the path or cycle obtained from $H-k$ by removing all the negative vertices in $H-k$. For example, for $k = 4$ and the path $H$ given by $7-3-4-0-2-5-9-6$, the path $S_4(H)$ is $3-0-1-5-2$. Counting Hamiltonian cycles in $G_{a,b}(n)$ is greatly facilitated by the following simple but crucial observation.

\begin{lemma} \label{downshift}
Let $a, b, k, n$ be positive integers with $a < b$, $\gcd(a,b) = 1$ and $k \ge n-3$. Let~$P$ be a path in $G_{a,b}(n)$ such that every element of $\{ 0, 1, \ldots , k-1 \}$ is an internal vertex of~$P$ and~$S_k(P)$ is a sub-graph of~$G_{a,b}(n-k)$. Then the Hamiltonian cycles  in~$G_{a,b}(n)$ that contain~$P$ are in one-to-one correspondence with the Hamiltonian cycles in~$G_{a,b}(n-k)$ that contain~$S_k(P)$ via the correspondence $C \rightarrow S_k(C)$.
\end{lemma}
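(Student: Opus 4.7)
The plan is to exhibit an explicit inverse to the map $C \mapsto S_k(C)$, which simultaneously establishes well-definedness, injectivity, and surjectivity. First I would confirm that $S_k(C)$ really is a Hamiltonian cycle in $G_{a,b}(n-k)$ containing $S_k(P)$. The hypothesis that every element of $\{0,1,\ldots,k-1\}$ is an internal vertex of $P$ means that all vertices of $C$ which become negative after subtracting $k$ lie strictly inside the sub-path $P$ of $C$; consequently, deleting them from $C$ does not disconnect the portion of $C$ lying outside $P$, and the resulting object is genuinely a single cycle. The vertex set after this operation is $\{0,1,\ldots,n-k-1\}$, which matches $V(G_{a,b}(n-k))$. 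For the edges, those of $C$ not lying on $P$ involve only vertices that were at least $k$ in $C$, and shifting by $k$ preserves absolute differences, so they are valid edges of $G_{a,b}(n-k)$; the remaining edges of $S_k(C)$ are exactly the edges of $S_k(P)$, which are valid in $G_{a,b}(n-k)$ by assumption.

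Next I would construct the inverse. Given a Hamiltonian cycle $C'$ in $G_{a,b}(n-k)$ containing $S_k(P)$ as a sub-path, I would shift $C'$ upward by $k$ and then cut out the sub-path $S_k(P)+k$ and splice the original path $P$ into its place. Since the endpoints of $P$ are not among $\{0,\ldots,k-1\}$, they coincide with the endpoints of $S_k(P)+k$, so this surgery yields a valid cycle $C$ on the full vertex set $\{0,1,\ldots,n-1\}$. Because $P$ is a sub-path of $G_{a,b}(n)$ by hypothesis and the edges of $C$ outside $P$ come from shifting edges of $C'$ upward by $k$ (which again preserves absolute differences), the cycle $C$ lies in $G_{a,b}(n)$ and evidently contains $P$.

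Finally I would verify that the two constructions are mutually inverse: shifting up by $k$ cancels shifting down by $k$, and the re-insertion of $P$ exactly fills the gap created by deleting its internal low-labelled vertices, so starting from $C$, applying $S_k$, and then carrying out the surgery recovers $C$, and similarly in the other direction. This simultaneously yields injectivity and surjectivity and completes the bijection. The most delicate step will be the first one: tracking what happens to edges at the boundary between $P$ and the rest of $C$, and checking that collapsing maximal runs of deleted vertices along $P$ produces precisely the edges of $S_k(P)$, so that no extraneous shortcut edges appear outside the $P$-segment where we have no control over absolute differences.
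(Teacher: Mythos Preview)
Your proposal is correct and follows essentially the same approach as the paper: verify that $S_k$ lands in the right codomain, construct the inverse by shifting up by $k$ and splicing $P$ back in place of $S_k(P)+k$, and observe that the two operations undo each other. In fact you spell out more carefully than the paper does why the edges of $S_k(C)$ outside $S_k(P)$ are valid in $G_{a,b}(n-k)$ (the paper simply says this ``is easily verified'') and why the endpoints match up in the splicing step.
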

\begin{proof}
The vertices of $S_k(P)$ are integers between 0 and $n-k-1$. When constructing~$S_k(P)$ from~$P-k$, the downshifts of the $k$ internal vertices $0, 1, \ldots , k-1$, and only those, are removed from $P$. In particular, the downshifts by $k$ of the end points of $P$ are the end points of $S_k(P)$. 

Let $C$ be a Hamiltonian cycle in $G_{a,b}(n)$ containing $P$. Then $S_k(C)$ is a cycle with vertex set $\{ 0, 1, \ldots , n-k-1 \}$ that contains $S_k(P)$ and is easily verified to be a subgraph, and hence a Hamiltonian cycle, of $G_{a,b}(n-k)$. Conversely, let $C$ be a Hamiltonian cycle in $G_{a,b}(n-k)$ containing $S_k(P)$. Then $C+k$ is a cycle with vertex set $\{k, k+1, \ldots , n-1\}$ that contains $S_k(P)+k$. Replacing $S_k(P)+k$ by $P$ in $C+k$ adds the vertices $0, 1, \ldots , k-1$ and is readily seen to produce a Hamiltonian cycle in $G_{a,b}(n)$.
\end{proof}

For example, consider the path $P$ in $G_{1,4}(n)$ given by $9-5-6-2-1-0-4-3-7$. Here, 0, 1, 2, 3, 4 are internal vertices of $P$, so we can construct the path $S_5(P)$ given by $4-0-1-2$ which is also contained in $G_{1,4}(n)$. By Lemma \ref{downshift}, the Hamiltonian cycles in $G_{1,4}(n)$ containing $9-5-6-2-1-0-4-3-7$ are in one-to-one correspondence with the Hamiltonian cycles in $G_{1,4}(n-5)$ containing $4-0-1-2$. In general, note that $S_k(P)$ is not always a subgraph of $G_{a,b}(n-k)$. For example, consider the path $P$ in $G_{1,3}(n)$ given by $4-3-0-1-2-5$. Now $S_3(P)$ is $1-0-2$, which is not a path in $G_{1,3}(n-3)$.

\subsection{Counting $(1,2)$-Necklaces}

The case of $(1,2)$-necklaces is straightforward. Every Hamiltonian cycle in $G_{1,2}(n)$ contains the path $1-0-2$, and it is now easy to see that this determines the cycle uniquely, traced around the periphery of $G_{1,2}(n)$. Hence,
\[ N_{1,2}(n) = 1 \quad \mbox{for all $n \geq 3$} . \]

\subsection{Counting $(1,3)$-Necklaces}

Only Hamiltonian cycles of even length exist in this setting. Every Hamiltonian cycle $C$ in $G_{1,3}(n)$ contains the path $1-0-3$. Vertex 1 is thus adjacent to either 2 or 4 in $C$, so $C$ contains exactly one of the paths $2-1-0-3$ or $4-1-0-3$. If $n \ge 6$, then 2 is easily seen to be adjacent to 5 in $C$. Hence, if $C$ contains $2-1-0-3$, then it contains $5-2-1-0-3$. If $C$ contains $4-1-0-3$, then 2 has neighbours 3 and 5, and if $n \geq 8$, then 4 is easily verified to be adjacent to 7; hence $C$ contains $7-4-1-0-3-2-5$. 

If $C$ contains $5-2-1-0-3$, then $C-2$ contains $3-0-1$ and thus corresponds to a unique Hamiltonian cycle in $G_{1,3}(n-2)$ by Lemma \ref{downshift}. Similarly, if $C$ contains $7-4-1-0-3-2-5$, then $C-4$ contains $3-0-1$ and hence again corresponds to a unique Hamiltonian cycle in $G_{1,3}(n-4)$ by Lemma \ref{downshift}. Let $N_{1,3}^{(2)}(n)$ and $N_{1,3}^{(4)}(n)$ be the number of Hamiltonian cycles in $G_{1,3}(n)$ that contain $5-2-1-0-3$ and $7-4-1-0-3-2-5$, respectively. These quantities and their relationship to each other is illustrated in Figure~\ref{13cases}.

\begin{figure}[h!]
\begin{center}
\begin{tikzpicture}[scale=1]

\node[draw][circle](1) at (0,4) {0};
\node[draw][circle](4) at (1,4) {3};
\node[draw][circle](7) at (2,4) {6};

\node[draw][circle](2) at (0,3) {1};
\node[draw][circle](5) at (1,3) {4};
\node[draw][circle](8) at (2,3) {7};

\node[draw][circle](3) at (0,2) {2};
\node[draw][circle](6) at (1,2) {5};
\node[draw][circle](9) at (2,2) {8};

\node (10) at (3,4) {} ;
\node (11) at (3,3) {};
\node (12) at (3,2) {};
\node at (1,1) {$N_{1,3}(n)$};


\draw[dotted]
(1)--(2)--(3)
(4)--(5)--(6)
(7)--(8)--(9)
(1)--(4)--(7)--(10)
(2)--(5)--(8)--(11)
(3)--(6)--(9)--(12)
(3)--(4)
(6)--(7)
(9)--(10);

\draw[very thick]
(4)--(1)--(2);


\def\x{5}

\node[draw][circle](13) at (\x+0,4) {0};
\node[draw][circle](16) at (\x+1,4) {3};
\node[draw][circle](19) at (\x+2,4) {6};

\node[draw][circle](14) at (\x+0,3) {1};
\node[draw][circle](17) at (\x+1,3) {4};
\node[draw][circle](20) at (\x+2,3) {7};

\node[draw][circle](15) at (\x+0,2) {2};
\node[draw][circle](18) at (\x+1,2) {5};
\node[draw][circle](21) at (\x+2,2) {8};

\node (22) at (\x+3,4) {} ;
\node (23) at (\x+3,3) {};
\node (24) at (\x+3,2) {};
\node at (\x+1,1) {$N_{1,3}^{(2)}(n)$};
\node at (\x*0.5+1,1) {$=$};


\draw[dotted]
(13)--(14)--(15)
(16)--(17)--(18)
(19)--(20)--(21)
(13)--(16)--(19)--(22)
(14)--(17)--(20)--(23)
(15)--(18)--(21)--(24)
(15)--(16)
(18)--(19)
(21)--(22);

\draw[very thick]
(16)--(13)--(14)--(15)--(18);


\def\y{9}

\node[draw][circle](13) at (\y+0,4) {0};
\node[draw][circle](16) at (\y+1,4) {3};
\node[draw][circle](19) at (\y+2,4) {6};

\node[draw][circle](14) at (\y+0,3) {1};
\node[draw][circle](17) at (\y+1,3) {4};
\node[draw][circle](20) at (\y+2,3) {7};

\node[draw][circle](15) at (\y+0,2) {2};
\node[draw][circle](18) at (\y+1,2) {5};
\node[draw][circle](21) at (\y+2,2) {8};

\node (22) at (\y+3,4) {} ;
\node (23) at (\y+3,3) {};
\node (24) at (\y+3,2) {};
\node at (\y+1,1) {$N_{1,3}^{(3)}(n)$};
\node at (\x*0.5+\y*0.5+1,1) {$+$};


\draw[dotted]
(13)--(14)--(15)
(16)--(17)--(18)
(19)--(20)--(21)
(13)--(16)--(19)--(22)
(14)--(17)--(20)--(23)
(15)--(18)--(21)--(24)
(15)--(16)
(18)--(19)
(21)--(22);

\draw[very thick]
(18)--(15)--(16)--(13)--(14)--(17)--(20);

\end{tikzpicture}
\end{center}
\caption{Counting $(1,3)$-Necklaces.}
\label{13cases}
\end{figure}
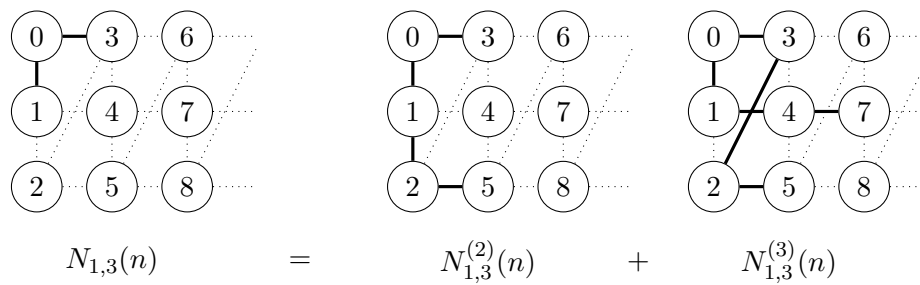

We thus see that
\begin{equation*}
    N_{1,3}(n) = N_{1,3}(n-2) + N_{1,3}(n-4) \qquad (n \ge 8) \ .
\end{equation*}
It is easy to check that $N_{1,3}(4)=1$ and $N_{1,3}(6) = 2$, so 
\[ N_{1,3}(n) = \begin{cases} F_{n/2} & \text{if } n \ge 4 \text{ is even,}\\ 0 & \text{if } n \ge 5 \text{ is odd,}
 \end{cases} 
 \]
where $F_n$ denotes the $n^{\textrm{th}}$ Fibonacci number (with $F_0 = 0$ and $F_1 = 1$).


\subsection{Counting $(2,3)$-Necklaces}

By considering the neighbours of 0 and 1 in any Hamiltonian cycle $C$ of $G_{2,3}(n)$, we see that~$C$ contains the path $4-1-3-0-2$, which extends to $4-1-3-0-2-5$ when $n \ge 6$. In addition, 4 is adjacent to either 6 or 7 in $C$. In the latter case, 5 is adjacent to 8 to avoid the cycle $7-4-1-3-0-2-5$. Moreover, 6 must be adjacent to 8 and 9, which forces 7 to be adjacent to 10 and shows that $C$ contains the path $10-7-4-1-3-0-2-5-8-6-9$. Similar to the previous case, we split the collection of Hamiltonian cycles in $G_{2,3}(n)$ into two subsets, depending on whether 4 has neighbour 6 or 7 in the cycle. Let $N_{2,3}^{(2)}(n)$ and $N_{2,3}^{(3)}(n)$ count the number of Hamiltonian cycles containing the respective paths $6-4-1-3-0-2-5$ and $10-7-4-1-3-0-2-5-8-6-9$. Then we obtain the situation of Figure~\ref{23cases}.

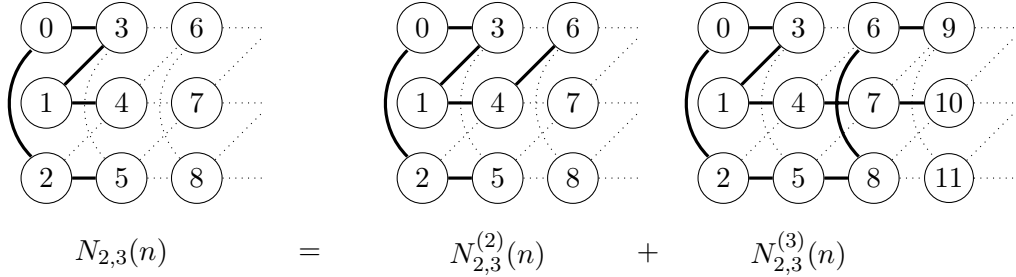
\begin{figure}[h!]
\begin{center}
\begin{tikzpicture}[scale=1]

\node[draw][circle](1) at (0,4) {0};
\node[draw][circle](4) at (1,4) {3};
\node[draw][circle](7) at (2,4) {6};

\node[draw][circle](2) at (0,3) {1};
\node[draw][circle](5) at (1,3) {4};
\node[draw][circle](8) at (2,3) {7};

\node[draw][circle](3) at (0,2) {2};
\node[draw][circle](6) at (1,2) {5};
\node[draw][circle](9) at (2,2) {8};

\node (10) at (3,4) {} ;
\node (11) at (3,3) {};
\node (12) at (3,2) {};
\node at (1,1) {$N_{2,3}(n)$};


\draw[dotted]
(2)--(4)
(3)--(5)
(5)--(7)
(6)--(8)
(8)--(10)
(9)--(11)
(1)--(4)--(7)--(10)
(2)--(5)--(8)--(11)
(3)--(6)--(9)--(12)

(0.8,3.65) arc (135:225:1)
(1.8,3.65) arc (135:225:1);

\draw[very thick]
(5)--(2)--(4)--(1)
(-0.2,3.7) arc (135:225:1)
(3)--(6);


\def\x{5}

\node[draw][circle](13) at (\x+0,4) {0};
\node[draw][circle](16) at (\x+1,4) {3};
\node[draw][circle](19) at (\x+2,4) {6};

\node[draw][circle](14) at (\x+0,3) {1};
\node[draw][circle](17) at (\x+1,3) {4};
\node[draw][circle](20) at (\x+2,3) {7};

\node[draw][circle](15) at (\x+0,2) {2};
\node[draw][circle](18) at (\x+1,2) {5};
\node[draw][circle](21) at (\x+2,2) {8};

\node (22) at (\x+3,4) {} ;
\node (23) at (\x+3,3) {};
\node (24) at (\x+3,2) {};
\node at (\x+1,1) {$N_{2,3}^{(2)}(n)$};
\node at (\x*0.5+1,1) {$=$};


\draw[dotted]
(14)--(16)
(15)--(17)
(17)--(19)
(18)--(20)
(20)--(22)
(21)--(23)
(13)--(16)--(19)--(22)
(14)--(17)--(20)--(23)
(15)--(18)--(21)--(24)

(0.8+\x,3.7) arc (135:225:1)
(1.8+\x,3.7) arc (135:225:1);

\draw[very thick]
(17)--(14)--(16)--(13)
(-0.2+\x,3.7) arc (135:225:1)
(17)--(19)

(15)--(18);


\def\y{9}

\node[draw][circle](13) at (\y+0,4) {0};
\node[draw][circle](16) at (\y+1,4) {3};
\node[draw][circle](19) at (\y+2,4) {6};

\node[draw][circle](14) at (\y+0,3) {1};
\node[draw][circle](17) at (\y+1,3) {4};
\node[draw][circle](20) at (\y+2,3) {7};

\node[draw][circle](15) at (\y+0,2) {2};
\node[draw][circle](18) at (\y+1,2) {5};
\node[draw][circle](21) at (\y+2,2) {8};

\node[draw][circle] (22) at (\y+3,4) {9} ;
\node[draw][circle,inner sep=2pt] (23) at (\y+3,3) {10};
\node[draw][circle,inner sep=2pt] (24) at (\y+3,2) {11};

\node (25) at (\y+4,4) {} ;
\node (26) at (\y+4,3) {};
\node (27) at (\y+4,2) {};

\node at (\y+1,1) {$N_{2,3}^{(3)}(n)$};
\node at (\x*0.5+\y*0.5+1,1) {$+$};


\draw[dotted]
(14)--(16)
(15)--(17)
(17)--(19)
(18)--(20)
(20)--(22)
(21)--(23)
(23)--(25)
(24)--(26)
(13)--(16)--(19)--(22)--(25)
(14)--(17)--(20)--(23)--(26)
(15)--(18)--(21)--(24)--(27)

(0.8+\y,3.7) arc (135:225:1)

(2.8+\y,3.7) arc (135:225:1);

\draw[very thick]
(17)--(14)--(16)--(13)
(17)--(20)
(18)--(21)
(-0.2+\y,3.7) arc (135:225:1)
(1.8+\y,3.7) arc (135:225:1)
(15)--(18)
(19)--(22)
(20)--(23);

\end{tikzpicture}
\end{center}
\caption{Counting $(2,3)$-necklaces.}
\label{23cases}
\end{figure}

By Lemma~\ref{downshift}, we have $N_{2,3}^{(3)}(n) = N_{2,3}(n-5)$. To determine $N_{2,3}^{(2)}(n)$, consider the path $P$ given by  $6-4-1-3-0-2-5$. Here, $S_1(P)$ is the path $5-3-0-2-1-4$ which is not contained in $G_{2,3}(m)$ for any $m$. However, the permutation exchanging~2 and~3 in $S_1(P)$ produces the path $5-2-0-3-1-4$ which is contained in $G_{2,3}(n)$. So it is evident that via the bijection $C \rightarrow S_5(C)$ followed by the permutation that swaps 2 and 3 in $S_5(C)$, the Hamiltonian cycles in $G_{2,3}(n-1)$ containing $6-4-1-3-0-2-5$ are in one-to-one correspondence with the Hamiltonian cycles in $G_{2,3}(n-1)$ that contain $5-2-0-3-1-4$. It follows that $N_{2,3}^{(2)}(n) = N_{2,3}(n-1)$, and hence
\begin{equation*}
    N_{2,3}(n) = N_{2,3}(n-1) + N_{2,3}(n-5)  \qquad (n \ge 8) \ .
\end{equation*}
It is easy to find initial values of $N_{2,3}(n)$ by hand, they are shown in Table~\ref{23terms}. We remark that the sequence given by $N_{2,3}(n)$ is A017899 in OEIS. In addition A003520 is the same modulo a shift. Note that the denominator of the generating function of $N_{2,3}(n)$ is irreducible and $N_{2,3}(n)$ cannot satisfy a linear recurrence of order less than 5.

\begin{table} \centering
 \begin{tabular}{| c || c | c |c |} 
 \hline
         $n$  & $5$ & $6$ & $7$ \\ 
 \hline
 $N_{2,3}(n)$ & $1$ & $0$ & $0$  \\ 
 \hline
\end{tabular}
\caption{Initial values of $N_{2,3}(n)$.}
\label{23terms}
\end{table}

\subsection{Counting $(1,4)$-Necklaces} \label{count14}
 
Similar to the case of $(1,3)$-necklaces, we partition the set of all Hamiltonian cycles in~$G_{1,4}(n)$ into two types, depending on the neighbours of vertex~1. Every Hamiltonian cycle $C$ in $G_{1,4}(n)$ contains the path $4-0-1$ and either the edge $1-2$ or the edge $1-5$. In the former case, $C$ contain the path $4-0-1-2$; in the latter case, $C$ contains the paths $4-0-1-5$ and $3-2-6$. These two cases are depicted in Figure~\ref{14cases1}, with $N_{1,4}^{(2)}(n)$ and $N_{1,4}^{(2)}(n)$ denoting the respective counts of Hamiltonian cycles in $G_{1,4}(n)$.

\begin{figure}[h!]
\begin{center}
\begin{tikzpicture}[scale=1]

\node[draw][circle](1) at (0,4) {0};
\node[draw][circle](5) at (1,4) {4};
\node[draw][circle](9) at (2,4) {8};

\node[draw][circle](2) at (0,3) {1};
\node[draw][circle](6) at (1,3) {5};
\node[draw][circle](10) at (2,3) {9};

\node[draw][circle](3) at (0,2) {2};
\node[draw][circle](7) at (1,2) {6};
\node[draw][circle,inner sep=2pt](11) at (2,2) {10};

\node[draw][circle](4) at (0,1) {3};
\node[draw][circle](8) at (1,1) {7};
\node[draw][circle,inner sep=2pt](12) at (2,1) {11};

\node (13) at (3,4) {} ;
\node (14) at (3,3) {};
\node (15) at (3,2) {};
\node (16) at (3,1) {};
\node at (1,0) {$N_{1,4}(n)$};


\draw[dotted]
(1)--(2)--(3)--(4)
(4)--(5)--(6)--(7)
(7)--(8)--(9)--(10)
(1)--(5)--(9)--(11)
(2)--(6)--(10)--(14)
(3)--(7)--(11)--(15)
(4)--(8)--(12)--(16)
(4)--(5)
(8)--(9)
(12)--(13);

\draw[very thick]
(5)--(1)--(2);


\def\x{5}

\node[draw][circle](1) at (\x+0,4) {0};
\node[draw][circle](5) at (\x+1,4) {4};
\node[draw][circle](9) at (\x+2,4) {8};

\node[draw][circle](2) at (\x+0,3) {1};
\node[draw][circle](6) at (\x+1,3) {5};
\node[draw][circle](10) at (\x+2,3) {9};

\node[draw][circle](3) at (\x+0,2) {2};
\node[draw][circle](7) at (\x+1,2) {6};
\node[draw][circle,inner sep=2pt](11) at (\x+2,2) {10};

\node[draw][circle](4) at (\x+0,1) {3};
\node[draw][circle](8) at (\x+1,1) {7};
\node[draw][circle,inner sep=2pt](12) at (\x+2,1) {11};

\node (13) at (\x+3,4) {} ;
\node (14) at (\x+3,3) {};
\node (15) at (\x+3,2) {};
\node (16) at (\x+3,1) {};
\node at (\x+1,0) {$N_{1,4}^{(2)}(n)$};
\node at (\x*0.5+1,0) {$=$};


\draw[dotted]
(1)--(2)--(3)--(4)
(4)--(5)--(6)--(7)
(7)--(8)--(9)--(10)
(1)--(5)--(9)--(11)
(2)--(6)--(10)--(14)
(3)--(7)--(11)--(15)
(4)--(8)--(12)--(16)
(4)--(5)
(8)--(9)
(12)--(13);

\draw[very thick]
(5)--(1)--(2)--(3);


\def\y{9}

\node[draw][circle](1) at (\y+0,4) {0};
\node[draw][circle](5) at (\y+1,4) {4};
\node[draw][circle](9) at (\y+2,4) {8};

\node[draw][circle](2) at (\y+0,3) {1};
\node[draw][circle](6) at (\y+1,3) {5};
\node[draw][circle](10) at (\y+2,3) {9};

\node[draw][circle](3) at (\y+0,2) {2};
\node[draw][circle](7) at (\y+1,2) {6};
\node[draw][circle,inner sep=2pt](11) at (\y+2,2) {10};

\node[draw][circle](4) at (\y+0,1) {3};
\node[draw][circle](8) at (\y+1,1) {7};
\node[draw][circle,inner sep=2pt](12) at (\y+2,1) {11};

\node (13) at (\y+3,4) {} ;
\node (14) at (\y+3,3) {};
\node (15) at (\y+3,2) {};
\node (16) at (\y+3,1) {};
\node at (\y+1,0) {$N_{1,4}^{(3)}(n)$};
\node at (\x*0.5+0.5*\y+1,0) {$+$};


\draw[dotted]
(1)--(2)--(3)--(4)
(4)--(5)--(6)--(7)
(7)--(8)--(9)--(10)
(1)--(5)--(9)--(11)
(2)--(6)--(10)--(14)
(3)--(7)--(11)--(15)
(4)--(8)--(12)--(16)
(4)--(5)
(8)--(9)
(12)--(13);

\draw[very thick]
(5)--(1)--(2)--(6)
(4)--(3)--(7);

\end{tikzpicture}
\end{center}
\caption{Counting $(1,4)$-Necklaces.}
\label{14cases1}
\end{figure}


Any Hamiltonian cycle in $G_{1,4}(n)$ containing the path $4-0-1-2$ contains exactly one of the edges $2-3$ or $2-6$. The first of these scenarios forces $C$ to contain the edge $3-7$, provided $n \ge 6$. In the second case, 3 has neighbours 4 and 7 in $C$ and 5 has neighbours 6 and 9 in $C$, yielding the path $9-5-6-2-1-0-4-3-7$. Denoting the respective Hamiltonian cycle counts in these two cases by $N_{1,4}^{(4)}(n)$ and $N_{1,4}^{(5)}(n)$. we obtain Figure~\ref{14cases2}. 
 

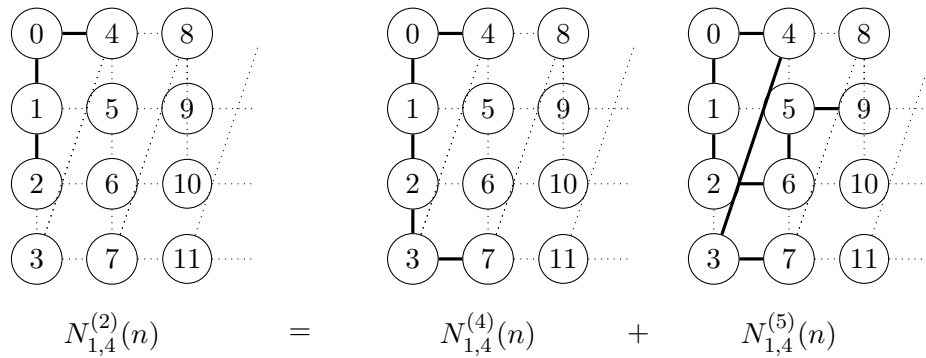
\begin{figure}[h!]
\begin{center}
\begin{tikzpicture}[scale=1]

\node[draw][circle](1) at (0,4) {0};
\node[draw][circle](5) at (1,4) {4};
\node[draw][circle](9) at (2,4) {8};

\node[draw][circle](2) at (0,3) {1};
\node[draw][circle](6) at (1,3) {5};
\node[draw][circle](10) at (2,3) {9};

\node[draw][circle](3) at (0,2) {2};
\node[draw][circle](7) at (1,2) {6};
\node[draw][circle,inner sep=2pt](11) at (2,2) {10};

\node[draw][circle](4) at (0,1) {3};
\node[draw][circle](8) at (1,1) {7};
\node[draw][circle,inner sep=2pt](12) at (2,1) {11};

\node (13) at (3,4) {} ;
\node (14) at (3,3) {};
\node (15) at (3,2) {};
\node (16) at (3,1) {};
\node at (1,0) {$N_{1,4}^{(2)}(n)$};


\draw[dotted]
(1)--(2)--(3)--(4)
(4)--(5)--(6)--(7)
(7)--(8)--(9)--(10)
(1)--(5)--(9)--(11)
(2)--(6)--(10)--(14)
(3)--(7)--(11)--(15)
(4)--(8)--(12)--(16)
(4)--(5)
(8)--(9)
(12)--(13);

\draw[very thick]
(5)--(1)--(2)--(3);


\def\x{5}

\node[draw][circle](1) at (\x+0,4) {0};
\node[draw][circle](5) at (\x+1,4) {4};
\node[draw][circle](9) at (\x+2,4) {8};

\node[draw][circle](2) at (\x+0,3) {1};
\node[draw][circle](6) at (\x+1,3) {5};
\node[draw][circle](10) at (\x+2,3) {9};

\node[draw][circle](3) at (\x+0,2) {2};
\node[draw][circle](7) at (\x+1,2) {6};
\node[draw][circle,inner sep=2pt](11) at (\x+2,2) {10};

\node[draw][circle](4) at (\x+0,1) {3};
\node[draw][circle](8) at (\x+1,1) {7};
\node[draw][circle,inner sep=2pt](12) at (\x+2,1) {11};

\node (13) at (\x+3,4) {} ;
\node (14) at (\x+3,3) {};
\node (15) at (\x+3,2) {};
\node (16) at (\x+3,1) {};
\node at (\x+1,0) {$N_{1,4}^{(4)}(n)$};
\node at (\x*0.5+1,0) {$=$};


\draw[dotted]
(1)--(2)--(3)--(4)
(4)--(5)--(6)--(7)
(7)--(8)--(9)--(10)
(1)--(5)--(9)--(11)
(2)--(6)--(10)--(14)
(3)--(7)--(11)--(15)
(4)--(8)--(12)--(16)
(4)--(5)
(8)--(9)
(12)--(13);

\draw[very thick]
(5)--(1)--(2)--(3)--(4)--(8);


\def\y{9}

\node[draw][circle](1) at (\y+0,4) {0};
\node[draw][circle](5) at (\y+1,4) {4};
\node[draw][circle](9) at (\y+2,4) {8};

\node[draw][circle](2) at (\y+0,3) {1};
\node[draw][circle](6) at (\y+1,3) {5};
\node[draw][circle](10) at (\y+2,3) {9};

\node[draw][circle](3) at (\y+0,2) {2};
\node[draw][circle](7) at (\y+1,2) {6};
\node[draw][circle,inner sep=2pt](11) at (\y+2,2) {10};

\node[draw][circle](4) at (\y+0,1) {3};
\node[draw][circle](8) at (\y+1,1) {7};
\node[draw][circle,inner sep=2pt](12) at (\y+2,1) {11};

\node (13) at (\y+3,4) {} ;
\node (14) at (\y+3,3) {};
\node (15) at (\y+3,2) {};
\node (16) at (\y+3,1) {};
\node at (\y+1,0) {$N_{1,4}^{(5)}(n)$};
\node at (\x*0.5+0.5*\y+1,0) {$+$};


\draw[dotted]
(1)--(2)--(3)--(4)
(4)--(5)--(6)--(7)
(7)--(8)--(9)--(10)
(1)--(5)--(9)--(11)
(2)--(6)--(10)--(14)
(3)--(7)--(11)--(15)
(4)--(8)--(12)--(16)
(4)--(5)
(8)--(9)
(12)--(13);

\draw[very thick]
(8)--(4)--(5)--(1)--(2)--(3)--(7)--(6)--(10);

\end{tikzpicture}
\end{center}

\caption{Counting $(1,4)$-necklaces containing the path $4-0-1-2$.}
\label{14cases2}
\end{figure}


Similarly, if $C$ contains the paths $4-0-1-5$ and $3-2-6$, then it contains either the edge $3-4$ or the edge $3-7$. The former situation yields the path $6-2-3-4-0-1-5$ in $C$, and 5 must be adjacent to~9 if $n \ge 8$. If $C$ contains $4-0-1-5$ and $7-3-2-6$, then 4 must be adjacent to 8 in $C$ to avoid forming the small cycle $4-0-1-5$. This yields the decomposition of $N_{1,4}^{(3)}(n)$ shown in Figure~\ref{14cases3}. 


\begin{figure}[h!]
\begin{center}
\begin{tikzpicture}[scale=1]

\node[draw][circle](1) at (0,4) {0};
\node[draw][circle](5) at (1,4) {4};
\node[draw][circle](9) at (2,4) {8};

\node[draw][circle](2) at (0,3) {1};
\node[draw][circle](6) at (1,3) {5};
\node[draw][circle](10) at (2,3) {9};

\node[draw][circle](3) at (0,2) {2};
\node[draw][circle](7) at (1,2) {6};
\node[draw][circle,inner sep=2pt](11) at (2,2) {10};

\node[draw][circle](4) at (0,1) {3};
\node[draw][circle](8) at (1,1) {7};
\node[draw][circle,inner sep=2pt](12) at (2,1) {11};

\node (13) at (3,4) {} ;
\node (14) at (3,3) {};
\node (15) at (3,2) {};
\node (16) at (3,1) {};
\node at (1,0) {$N_{1,4}^{(3)}(n)$};


\draw[dotted]
(1)--(2)--(3)--(4)
(4)--(5)--(6)--(7)
(7)--(8)--(9)--(10)
(1)--(5)--(9)--(11)
(2)--(6)--(10)--(14)
(3)--(7)--(11)--(15)
(4)--(8)--(12)--(16)
(4)--(5)
(8)--(9)
(12)--(13);

\draw[very thick]
(5)--(1)--(2)--(6)
(4)--(3)--(7);


\def\x{5}

\node[draw][circle](1) at (\x+0,4) {0};
\node[draw][circle](5) at (\x+1,4) {4};
\node[draw][circle](9) at (\x+2,4) {8};

\node[draw][circle](2) at (\x+0,3) {1};
\node[draw][circle](6) at (\x+1,3) {5};
\node[draw][circle](10) at (\x+2,3) {9};

\node[draw][circle](3) at (\x+0,2) {2};
\node[draw][circle](7) at (\x+1,2) {6};
\node[draw][circle,inner sep=2pt](11) at (\x+2,2) {10};

\node[draw][circle](4) at (\x+0,1) {3};
\node[draw][circle](8) at (\x+1,1) {7};
\node[draw][circle,inner sep=2pt](12) at (\x+2,1) {11};

\node (13) at (\x+3,4) {} ;
\node (14) at (\x+3,3) {};
\node (15) at (\x+3,2) {};
\node (16) at (\x+3,1) {};
\node at (\x+1,0) {$N_{1,4}^{(6)}(n)$};
\node at (\x*0.5+1,0) {$=$};


\draw[dotted]
(1)--(2)--(3)--(4)
(4)--(5)--(6)--(7)
(7)--(8)--(9)--(10)
(1)--(5)--(9)--(11)
(2)--(6)--(10)--(14)
(3)--(7)--(11)--(15)
(4)--(8)--(12)--(16)
(4)--(5)
(8)--(9)
(12)--(13);

\draw[very thick]
(7)--(3)--(4)--(5)--(1)--(2)--(6)--(10);


\def\y{9}

\node[draw][circle](1) at (\y+0,4) {0};
\node[draw][circle](5) at (\y+1,4) {4};
\node[draw][circle](9) at (\y+2,4) {8};

\node[draw][circle](2) at (\y+0,3) {1};
\node[draw][circle](6) at (\y+1,3) {5};
\node[draw][circle](10) at (\y+2,3) {9};

\node[draw][circle](3) at (\y+0,2) {2};
\node[draw][circle](7) at (\y+1,2) {6};
\node[draw][circle,inner sep=2pt](11) at (\y+2,2) {10};

\node[draw][circle](4) at (\y+0,1) {3};
\node[draw][circle](8) at (\y+1,1) {7};
\node[draw][circle,inner sep=2pt](12) at (\y+2,1) {11};

\node (13) at (\y+3,4) {} ;
\node (14) at (\y+3,3) {};
\node (15) at (\y+3,2) {};
\node (16) at (\y+3,1) {};
\node at (\y+1,0) {$N_{1,4}^{(7)}(n)$};
\node at (\x*0.5+0.5*\y+1,0) {$+$};


\draw[dotted]
(1)--(2)--(3)--(4)
(4)--(5)--(6)--(7)
(7)--(8)--(9)--(10)
(1)--(5)--(9)--(11)
(2)--(6)--(10)--(14)
(3)--(7)--(11)--(15)
(4)--(8)--(12)--(16)
(4)--(5)
(8)--(9)
(12)--(13);

\draw[very thick]
(9)--(5)--(1)--(2)--(6)
(8)--(4)--(3)--(7);

\end{tikzpicture}
\end{center}

\caption{Counting $(1,4)$-necklaces containing the path $4-0-1-5$.}
\label{14cases3}
\end{figure}
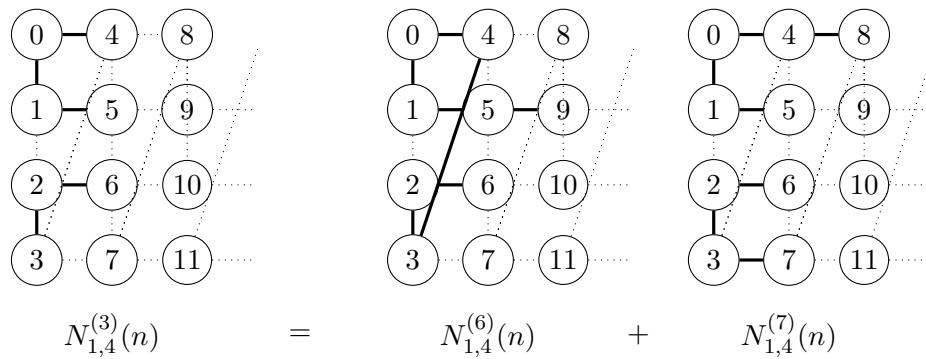

Applying Lemma~\ref{downshift} to the Hamiltonian cycles in $G_{1,4}(n)$ counted by $N_{1,4}^{(i)}(n)$ for \mbox{$i = 4, 5, 6, 7$} yields
\begin{align*}
    N_{1,4}^{(4)}(n) &= N_{1,4}(n-3) \ , \\
    N_{1,4}^{(5)}(n) &= N_{1,4}^{(2)}(n-5) \ , \\
    N_{1,4}^{(6)}(n) &= N_{1,4}(n-5) \ , \\
    N_{1,4}^{(7)}(n) &= N_{1,4}^{(3)}(n-2) \ , 
\end{align*}
where for $N_{1,4}^{(7)}(n)$, the construction of $S_2(C)$ is applied simultaneously to the path $8-4-0-1-5$ which becomes $6-2-3$, and the path $7-3-2-6$ also contained in $C$ becomes $5-1-0-4$ in $S_2(C)$. 
Solving for $N_{1,4}(n)$ produces the linear recurrence 
\begin{align}\label{14rec}
N_{1,4}(n) & = N_{1,4}(n-3)+N_{1,4}(n-5)+N_{1,4}(n-7)+N_{1,4}(n-8)\nonumber \\
           & \qquad +N_{1,4}(n-9)+N_{1,4}(n-10)+N_{1,4}(n-11)+N_{1,4}(n-13)  
\end{align}
for $n\geq 16$. The first 13 terms can be obtained by hand and/or computer search and are listed in Table~\ref{14terms}.

\begin{table} \centering
 \begin{tabular}{| c || c | c |c |c |c |c |c |c |c |c |c |} 
 \hline
          $n$  & $5$ & $6$ & $7$ & $8$ & $9$ & $10$ & $11$ & $12$ & $13$ & $14$ & $15$\\ 
 \hline
 $N_{1,4}(n) $ & $1$ & $0$ & $1$ & $1$ & $1$ & $3$  & $2$  & $3$  & $6$  & $5$  & $10$ \\ 
 \hline
\end{tabular}
\caption{Initial values of $N_{1,4}(n)$.}
\label{14terms}
\end{table}

We remark that the characteristic polynomial corresponding to the recurrence in~(\ref{14rec}) can be factored into a product of two irreducible polynomials of degree 4 and 9, thereby giving the following lower order recurrence for $N_{1,4}(n)$:
\begin{align*}
N_{1,4}(n) & = -N_{1,4}(n-1) + N_{1,4}(n-3) + N_{1,4}(n-4) + 2N_{1,4}(n-5) \\
           & \qquad + 2N_{1,4}(n-6) + N_{1,4}(n-7) + N_{1,4}(n-8) + N_{1,4}(n-9) \ .
\end{align*}
Here, order~9 is optimal in the sense that $N_{1,4}(n)$ does not obey a linear recurrence of order~8 or less.


The downshifting technique employed in Lemma \ref{downshift} can in principle be used to count $(a,b)$-necklaces for any permissible pair $(a,b)$. The approach can be generalized considerably; we already encountered a version applied to Hamiltonian cycles with additional graph containment restrictions in the derivation of $N_{1,4}^{(7)}(n)$ and a variant allowing suitable permutations of degree~2 vertices in the determination of $N_{(2,3)}^{(2)}(n)$. The main obstacle in applying the method to larger parameters $(a,b)$ is a combinatorial explosion of the number of cases partitioning  Hamiltonian cycles according to suitable subgraph configurations. Although it is difficult to enumerate $(a,b)$-necklaces for specific values $a,b$, we prove in the next section that the count always satisfies a linear homogeneous recurrence relation.

\section{Necklaces as Walks in a Weighted Digraph} \label{S:general}

Throughout this section, fix positive coprime integers $a, b$ with $a < b$. The techniques used in the previous section can be generalized to prove that $N_{a,b}(n)$ obeys a linear homogeneous recurrence relation of fixed order with integer coefficients. To that end, we will construct a weighted digraph $D_{a,b}$ in which certain walks whose edge weights sum to $n$ are in one-to-one correspondence with $(a,b)$-necklaces of length~$n$. Lemma~\ref{recur} shows that the number of such walks satisfies a linear recurrence; hence, so does the count $N_{a,b}(n)$ of $(a,b)$-necklaces. 

Let $D$ be a weighted directed graph with a weight function $w$ defined on its edges. For any walk $W$ in $D$ consisting of edges $e_1, e_2, \ldots , e_k$, define
\begin{equation} \label{weightsum}
   S(W) = \sum_{i=1}^k w(e_i) 
\end{equation} 
to be the sum of the edge weights of $P$. 

\begin{lemma} \label{recur}
Let $D$ be weighted directed graph. For $n \in \mathbb{Z}^{> 0}$, let $f(n)$ denote the number of walks~$W$ in $D$ such that $S(W) = n$. Then $f(n)$ satisfies a linear homogeneous recurrence relation.
\end{lemma}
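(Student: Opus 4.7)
The plan is to encode all walks in $D$ by a single generating function, show this generating function is rational, and then invoke the classical equivalence between rational generating functions and sequences satisfying linear homogeneous recurrences with constant coefficients. For this to make sense, $f(n)$ must be finite for every $n$, which is guaranteed as long as every edge weight is a positive integer—an assumption that will be automatic for the weighted digraph $D_{a,b}$ to be constructed in the sequel.

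First I would set up a transfer matrix. Let $V$ be the vertex set of $D$ and define the $|V| \times |V|$ matrix $M(x)$ with entries
\[
M(x)_{ij} \; = \; \sum_{e \in E(i,j)} x^{w(e)},
\]
where $E(i,j)$ is the set of edges from $i$ to $j$. A straightforward induction on $k$ shows that the $(i,j)$-entry of $M(x)^k$ equals $\sum_W x^{S(W)}$, summed over walks $W$ with exactly $k$ edges from $i$ to $j$. Summing over $k \ge 0$ and over all pairs of endpoints yields
\[
F(x) \; := \; \sum_{n \ge 0} f(n) x^n \; = \; \mathbf{1}^T \bigl(I - M(x)\bigr)^{-1} \mathbf{1},
\]
where $\mathbf{1}$ is the all-ones column vector. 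Because $w(e) \ge 1$ for every edge, $M(x)$ has no constant term, so $I - M(x)$ is invertible over the ring of formal power series.

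The heart of the argument is to observe that $F(x)$ is rational. By Cramer's rule, every entry of $(I - M(x))^{-1}$ is a quotient of a polynomial in $x$ by $Q(x) := \det(I - M(x))$, and $Q(0) = 1$. Hence $F(x) = P(x)/Q(x)$ for some polynomial $P(x)$ and some polynomial $Q(x)$ with $Q(0) = 1$. I would finish by quoting the standard characterization of C-finite sequences (see, e.g., Stanley, \emph{Enumerative Combinatorics}, Vol.~1, Theorem 4.1.1): if $Q(x) = 1 - c_1 x - c_2 x^2 - \cdots - c_d x^d$, then
\[
f(n) \; = \; c_1 f(n-1) + c_2 f(n-2) + \cdots + c_d f(n-d)
\]
for all sufficiently large $n$, which is the required linear homogeneous recurrence.

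I do not anticipate a serious obstacle here; this is a textbook application of the transfer matrix method. The only real point that needs care is to guarantee that $F(x)$ is well-defined as a formal power series, i.e.\ that $f(n)$ is finite, and this is immediate from the positivity of the edge weights that will arise in the construction of $D_{a,b}$.
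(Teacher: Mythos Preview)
Your argument is correct. The paper takes a slightly different route: instead of working with a polynomial-entry transfer matrix, it constructs an \emph{unweighted} digraph $D^*$ from $D$ by subdividing each edge of weight $m$ into a directed path of $m$ edges, so that walks of total weight $n$ in $D$ correspond bijectively to walks of length $n$ in $D^*$; the result then follows from the standard fact (Stanley, Theorem~4.7.1 and Corollary~4.7.4) that walk counts by length in an unweighted digraph are C-finite. Your approach stays inside $D$ and produces the rational generating function directly from $\det(I-M(x))$, while the paper's subdivision trick reduces immediately to the textbook unweighted case at the expense of a larger vertex set (and hence a potentially higher-order recurrence). Both rest on the same rationality principle and both need positive integer edge weights, which you correctly note.
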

\begin{proof}
Construct an unweighted digraph $D^*$ from $D$ by subdividing every edge $e = (u,v)$ of weight $m$ in $D$ into a directed unweighted path of length $m$ from $u$ to $v$ in $D^*$. Then for any pair of vertices $x, y$ in $D$ and any $n > 0$, the walks $W$ from $x$ to $y$ in $D$ with $S(W) = n$ are in one-to-one correspondence with the walks of length $n$ from $x$ to $y$ in $D^*$. It is well-known that in a directed graph, the number of walks of fixed length connecting any pair of vertices obeys a linear homogeneous recurrence; see Theorem~4.7.1 and Corollary~4.7.4 of~\cite{RS}. 
\end{proof}

For example of this construction, see Figure~\ref{sumdigraph}.

\begin{figure}[h!]
    \centering
    \begin{subfigure}[t]{0.5\textwidth}
        \centering
     \begin{tikzpicture}[scale=0.6]
     
\node[draw][circle](1) at (0,3) {$A$};
\node[draw][circle](2) at (-3,0) {$B$};
\node[draw][circle](3) at (3,0) {$C$};

\path[->,thick]
(1) edge[bend right] node [left] {} (2)
(2) edge[bend right] node [left] {} (1)
(2) edge[bend right] node [left] {} (3)
(3) edge[bend right] node [left] {} (1);

\draw
(2.4,2.4) node{$1$}
(0,-0.5) node{$3$}
(-1,1.4) node{$1$}
(-2.4,2.4) node{$2$};

     \end{tikzpicture}
        \caption{ $D$}
    \end{subfigure}%
    ~ 
    \begin{subfigure}[t]{0.5\textwidth}
        \centering
     \begin{tikzpicture}[scale=0.6]
\node[draw][circle](1) at (0,3) {$A$};
\node[draw][circle](2) at (-3,0) {$B$};
\node[draw][circle](3) at (3,0) {$C$};
\node[draw][circle, inner sep=3pt](4) at (-1,-1) {};
\node[draw][circle, inner sep=3pt](5) at (1,-1) {};
\node[draw][circle, inner sep=3pt](6) at (-2.4,2.4) {};

\path[->,thick]
(2) edge[bend right] node [left] {} (4)
(4) edge[right] node [left] {} (5)
(5) edge[bend right] node [left] {} (3)
(2) edge[bend right] node [left] {} (1)
(3) edge[bend right] node [left] {} (1)
(1) edge[bend right] node [left] {} (6)
(6) edge[bend right] node [left] {} (2);

    \end{tikzpicture}
        \caption{$D^\ast$}
    \end{subfigure}
    \caption{}
    \label{sumdigraph}
\end{figure}

Our approach to establishing a linear recurrence for the count of $(a,b)$-necklaces via the aforementioned digraph construction draws on the \emph{transfer matrix method} described in Section~4.7 of~\cite{RS}. This technique has been deployed for enumerating Hamiltonian cycles in a variety of families of graphs; see the survey~\cite{G} for the application to grid, cylindrical, and torus graphs, for example. 

In our construction of $D_{a,b}$, we make extensive use of the topology of the ragged rectangular depiction of the graph $G_{a,b}(n)$. Write
\begin{equation} \label{ndiv}
n-1 = qb + r, \quad 0 \leq r < b, \quad q = \left \lfloor \frac{n-1}{b} \right \rfloor . 
\end{equation}
Then $G_{a,b}(n)$ has $b$ rows, labeled $0, 1, \ldots , b-1$, and $q+1$ columns, labeled $0, 1, \ldots , q$. The number of vertices in any column of $G_{a,b}(n)$ is referred to as its \emph{length}. Columns $0, 1, \ldots , q-1$ all have length $b$, while column $q$ has length $r+1 \le b$. Row~$i$ and column~$j$ intersect at vertex $jb+i$. It is easy to see that every edge in $G_{a,b}(n)$ joins two vertices in either the same column or adjacent columns. We use the same labeling and terminology for any subgraph of $G_{a,b}$.

\subsection{Construction for 2-Regular Spanning Subgraphs of $G_{a,b}$} \label{D'ab}

The description of the aforementioned directed graph $D_{a,b}$ is rather technical, so we first present a significantly simpler construction of a related digraph~$D'_{a,b}$ in which walks $W$ with $S(W) = n$ correspond to 2-regular spanning subgraphs (i.e.\ disjoint unions of cycles on all the vertices) of~$G_{a,b}(n)$. The idea underlying both constructions is to decompose any such subgraph in $G_{a,b}(n)$, with $n>2b$, into a unique sequence of smaller subgraphs on pairs of consecutive columns, called \emph{blocks}. For example, Figure~\ref{HC23} depicts Hamiltonian cycles in $G_{2,3}(14)$ and $G_{2,3}(17)$. The constituent blocks of these two Hamiltonian cycles are pictured in~ Figure~\ref{blocks}. Up to isomorphism, the two Hamiltonian cycles can be reconstructed through ``gluing'' together the appropriate blocks by identifying the vertices in the right column of a block with those in the left column of the next block in the sequence. Specifically, the Hamiltonian cycle in Figure~\ref{HC23}(a) is obtained in this way from the blocks $B_1, B_2, B_3, B_4$, and the Hamiltonian cycle in Figure~\ref{HC23}(b) from the sequence $B_1, B_2, B_2, B_3, B_4$. Note that the edges $6-8$ in both graphs and the edge $9-11$ in $G_{2,3}(17)$ are omitted from the left column of block~$B_3$ but included in the right column of~$B_2$. In general, edges joining two vertices in the left column of any block other than the first block will removed. 

\begin{figure}[h!]
    \centering
    \begin{subfigure}[t]{0.5\textwidth}
        \centering
     \begin{tikzpicture}[scale=0.6]
     
\node[draw][circle](1) at (0,3) {0};
\node[draw][circle](2) at (0,1.5) {1};
\node[draw][circle](3) at (0,0) {2};

\node[draw][circle](4) at (2,3) {3};
\node[draw][circle](5) at (2,1.5) {4};
\node[draw][circle](6) at (2,0) {5};

\node[draw][circle](7) at (4,3) {6};
\node[draw][circle](8) at (4,1.5) {7};
\node[draw][circle](9) at (4,0) {8};

\node[draw][circle](10) at (6,3) {9};
\node[draw][circle, inner sep=2pt](11) at (6,1.5) {10};
\node[draw][circle, inner sep=2pt](12) at (6,0) {11};

\node[draw][circle, inner sep=2pt](13) at (8,3) {12};
\node[draw][circle, inner sep=2pt](14) at (8,1.5) {13};

\draw 
(1)--(4)--(2)--(5)--(7)
(9)--(12)--(14)--(11)--(13)--(10)--(8)--(6)--(3)

(-0.45,2.7) arc (130:230:1.5)
(3.55,2.7) arc (130:230:1.5);

     \end{tikzpicture}
        \caption{ Hamiltonian cycle in $G_{2,3}(14)$.}
    \end{subfigure}%
    ~ 
    \begin{subfigure}[t]{0.5\textwidth}
        \centering
     \begin{tikzpicture}[scale=0.6]
\node[draw][circle](1) at (0,3) {0};
\node[draw][circle](2) at (0,1.5) {1};
\node[draw][circle](3) at (0,0) {2};

\node[draw][circle](4) at (2,3) {3};
\node[draw][circle](5) at (2,1.5) {4};
\node[draw][circle](6) at (2,0) {5};

\node[draw][circle](7) at (4,3) {6};
\node[draw][circle](8) at (4,1.5) {7};
\node[draw][circle](9) at (4,0) {8};

\node[draw][circle](10) at (6,3) {9};
\node[draw][circle, inner sep=2pt](11) at (6,1.5) {10};
\node[draw][circle, inner sep=2pt](12) at (6,0) {11};

\node[draw][circle, inner sep=2pt](13) at (8,3) {12};
\node[draw][circle, inner sep=2pt](14) at (8,1.5) {13};
\node[draw][circle, inner sep=2pt](15) at (8,0) {14};

\node[draw][circle, inner sep=2pt](16) at (10,3) {15};
\node[draw][circle, inner sep=2pt](17) at (10,1.5) {16};

\draw 
(8)--(6)--(3)
(1)--(4)--(2)--(5)--(7)
(8)--(10)
(12)--(15)--(17)--(14)--(16)--(13)--(11)--(9)

(-0.45,2.7) arc (130:230:1.5)
(3.55,2.7) arc (130:230:1.5)
(5.55,2.7) arc (130:230:1.5);

    \end{tikzpicture}
        \caption{Hamiltonian cycle in $G_{2,3}(17)$.}
    \end{subfigure}
    \caption{}
    \label{HC23}
\end{figure}

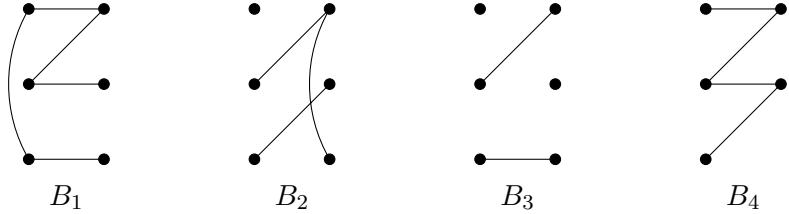
\begin{figure}[h!]
\begin{center}
\begin{tikzpicture}[scale=1]

\def\x{3}
\def\y{6}
\def\z{9}

\filldraw

(0,0) circle (2pt)
(0,1) circle (2pt)
(0,2) circle (2pt)

(1,0) circle (2pt)
(1,1) circle (2pt)
(1,2) circle (2pt)

(\x,0) circle (2pt)
(\x,1) circle (2pt)
(\x,2) circle (2pt)

(1+\x,0) circle (2pt)
(1+\x,1) circle (2pt)
(1+\x,2) circle (2pt)

(0+\y,0) circle (2pt)
(0+\y,1) circle (2pt)
(0+\y,2) circle (2pt)

(1+\y,0) circle (2pt)
(1+\y,1) circle (2pt)
(1+\y,2) circle (2pt)

(\z,0) circle (2pt)
(\z,1) circle (2pt)
(\z,2) circle (2pt)

(1+\z,1) circle (2pt)
(1+\z,2) circle (2pt);

\draw

(0,2) arc (150:210:2)
(0,2)--(1,2)--(0,1)--(1,1)
(0,0)--(1,0);

\draw

(\x,1)--(\x+1,2)
(\x+1,2) arc (150:210:2)
(\x,0)--(\x+1,1);

\draw

(\y,1)--(\y+1,2)
(\y,0)--(\y+1,0);

\draw 

(\z,0)--(\z+1,1)--(\z,1)--(\z+1,2)--(\z,2);

\draw

(0.5,-0.5) node{$B_1$} 
(3.5,-0.5) node{$B_2$} 
(6.5,-0.5) node{$B_3$} 
(9.5,-0.5) node{$B_4$} ;

\end{tikzpicture}
\end{center}
\caption{The constituent blocks of the Hamiltonian cycles in Figure~\ref{HC23}.}
\label{blocks}
\end{figure}

For every $n > b$, any two adjacent columns of $G_{a,b}(n)$ induce a subgraph that is isomorphic to $G_{a,b}(b+r+1)$ if the right column is the last column (column $q$) of $G_{a,b}(n)$ and to $G_{a,b}(2b)$ otherwise. Up to isomorphism, a block is a subgraph of the two-column graph $G_{a,b}(2b)$  whose maximum vertex degree is~2, subject to certain additional conditions. We formalize this notion in Definition~\ref{decomp2renate}.

\begin{defn}\label{decomp2renate} 
An $(a,b)$-block $B$ is a graph that is isomorphic to a spanning subgraph of $G_{a,b}(b+s)$ such that every vertex in $B$ has degree at most~2, subject to exactly one of following additional conditions: 
\begin{itemize} \itemsep 0pt
    \item $s = b$ and every vertex in the left column of $B$ has degree~2. In this case, $B$ is a \emph{start block}. 
    \item $s = b$, the right column of $B$ contains at least one vertex of degree $\le 1$, and no two vertices in the left column of $B$ are adjacent. In this case, $B$ is \emph{mid block}.  
    \item $1 \le s \le b$, every vertex in the right column of $B$ has degree~2, and no two vertices in the left column of $B$ are adjacent. In this case $B$ is an \emph{end block}.
\end{itemize}
For an ordered sequence of (not necessarily distinct) $(a,b)$-blocks $\mathbf{B} = (B_1, B_2, \ldots , B_m)$ such that $B_1, B_2, \ldots , B_{m-1}$ all have right columns of length $b$, define $\mathcal{G}(\mathbf{B})$ to be the graph obtained by identifying the $i$-th vertex the right column of $B_j$ with the $i$-th vertex in the left column of $B_{j+1}$, for $0 \leq i \leq b-1$ and $1 \leq j \le m-1$. 
\end{defn}

For example, in Figure~\ref{blocks}, $B_1$ is a $(2,3)$-start block, $B_2$ and $B_3$ are  $(2,3)$-mid blocks, and $B_4$ is a  $(2,3)$-end block. The Hamiltonian cycles in Figures~\ref{HC23}(a) and~\ref{HC23}(b) are isomorphic to $\mathcal{G}(B_1, B_2, B_3, B_4)$ and $\mathcal{G}(B_1, B_2, B_2, B_3, B_4)$, respectively. The graph $\mathcal{G}(B_1, B_3, B_4)$ is isomorphic to a Hamiltonian cycle of $G_{2,3}(11)$. 

When the values of $a$ and $b$ are clear from context, we will refer to an $(a,b)$-block as simply a block. Note that every block has a left column of length $b$ and a right column of length at most $b$, and only end blocks may have a right column of length less than~$b$. Only start blocks may contain edges joining vertices in their left column. Moreover, for any $n > b$, the only neighbours of $0$ in $G_{a,b}(n)$ are $a$ in column~0 and $b$ in column~1. Hence, the top left corner vertex of any mid block or end block has degree at most~1.

In order to obtain a Hamiltonian cycle from a sequence of blocks, adjacent blocks must fit together in such a way that every vertex in their shared column has degree~2. This is captured in the notion of compatibility.

\begin{defn}
Let $B, B'$ be two (not necessarily distinct) blocks. Then the ordered pair $(B, B')$ is \emph{compatible} if the right column of $B$ has length~$b$ and every vertex in the middle column of $\mathcal{G}(B, B')$ has degree 2. A finite ordered sequence of blocks $(B_1, B_2, \ldots , B_m)$ is compatible if $B_1$ is a start block and $(B_j, B_{j+1})$ is compatible for $1 \le j \le m-1$.
\end{defn}

In Figure~\ref{blocks}, the pairs of compatible blocks are $(B_1, B_2)$, $(B_1, B_3)$, $(B_2, B_2)$, $(B_2, B_3)$ and $(B_3, B_4)$. Examples of non-compatible block pairs include $(B_1, B_4)$ and $(B_3, B_2)$, since the middle column of $\mathcal{G}(B_1, B_4)$ and $\mathcal{G}(B_3, B_2)$ contain vertices of degree~3 and~1, respectively. A family of compatible sequences is given by $(B_1, B_2, \ldots , B_2, B_3, B_4)$, with zero or more occurrences of $B_2$. 

\begin{lemma}\label{usecompatible}
Let $q \ge 2$ and let $\mathbf{B} = (B_1, B_2, \ldots , B_q)$ be a sequence of compatible blocks such that $B_q$ is an end block. Put $n = qb + s$ where $s$ is the length of the right column of $B_q$. Then $\mathcal{G}(\mathbf{B})$ is isomorphic to a $2$-regular subgraph of $G_{a,b}(n)$ on $n$ vertices. 
\end{lemma}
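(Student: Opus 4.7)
The plan is to establish the claim in three stages: verify that $\mathcal{G}(\mathbf{B})$ has the right number of vertices, exhibit an explicit embedding into $G_{a,b}(n)$, and then check the $2$-regularity vertex by vertex using the definitions of the three block types together with compatibility.

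First I would count vertices. Since $(B_j, B_{j+1})$ is compatible for $1 \le j \le q-1$, each right column of $B_1, \ldots , B_{q-1}$ has length $b$, and by Definition~\ref{decomp2renate} each left column of $B_1, \ldots , B_q$ also has length $b$. Only the right column of the end block $B_q$ has (arbitrary) length $s$. The identification in the definition of $\mathcal{G}(\mathbf{B})$ collapses each shared column exactly once, so the total vertex count is $2b + (q-2)b + s = qb + s = n$.

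Next, I would construct the embedding. Label the $i$-th vertex of the left column of $B_j$ by $(j-1)b + i$ for $0 \le i \le b-1$, and label the $i$-th vertex of the right column of $B_q$ by $qb+i$ for $0 \le i \le s-1$. The identification rule in Definition~\ref{decomp2renate} makes this labeling well-defined, and the resulting vertex set is exactly $\{0, 1, \ldots , n-1\}$. Because each block $B_j$ is (by definition) isomorphic to a spanning subgraph of $G_{a,b}(2b)$ (or of $G_{a,b}(b+s)$ when $j=q$) via an isomorphism that sends its left column to $0,1,\ldots,b-1$ and its right column to $b,b+1,\ldots$, every edge within $B_j$ joins vertices whose original difference is $\pm a$ or $\pm b$. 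Shifting all labels inside $B_j$ upward by $(j-1)b$ preserves these differences, so every edge of $\mathcal{G}(\mathbf{B})$ is an edge of $G_{a,b}(n)$.

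Finally, I would verify that each vertex of $\mathcal{G}(\mathbf{B})$ has degree exactly $2$. There are three kinds of vertices. The vertices in the left column of $B_1$ have degree $2$ because $B_1$ is a start block. The vertices in the right column of $B_q$ have degree $2$ because $B_q$ is an end block. For a middle-column vertex, i.e.\ one identified between the right column of $B_j$ and the left column of $B_{j+1}$ for some $1 \le j \le q-1$, its degree in $\mathcal{G}(\mathbf{B})$ is exactly the sum of its degrees inside $B_j$ and inside $B_{j+1}$; this sum equals $2$ precisely by the compatibility of $(B_j, B_{j+1})$. Combining these three cases shows $\mathcal{G}(\mathbf{B})$ is $2$-regular.

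The main obstacle is essentially bookkeeping: one must be careful that when $q=2$ there is a single interface between a start block and an end block, and that no edge is double-counted across the identifications. A small subtlety is that a middle-column vertex might have degree~$0$ entirely within $B_j$ or $B_{j+1}$; this is allowed, and compatibility simply forces the other block to supply the missing incidences. No additional input is needed beyond the explicit degree constraints in Definition~\ref{decomp2renate}.
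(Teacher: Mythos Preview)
Your proof is correct and follows the same line as the paper's, only far more explicitly: the paper's own proof is the single sentence ``The graph $\mathcal{G}(\mathbf{B})$ has $qb+s = n$ vertices and is 2-regular since $\mathbf{B}$ is compatible,'' whereas you spell out the vertex count, the explicit labeling that realizes the embedding in $G_{a,b}(n)$, and the case split for $2$-regularity. One small remark: your intermediate claim that a middle-column vertex's degree is literally the \emph{sum} of its degrees in $B_j$ and $B_{j+1}$ tacitly assumes no edge is shared between the right column of $B_j$ and the left column of $B_{j+1}$; what compatibility actually gives you directly is that the degree in $\mathcal{G}(B_j,B_{j+1})$---and hence in $\mathcal{G}(\mathbf{B})$---equals $2$, which is the statement you need and which you already flag in your double-counting caveat.
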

\begin{proof}
The graph $\mathcal{G}(\mathbf{B})$ has $qb+s = n$ vertices and is 2-regular since $\mathbf{B}$ is compatible. 
\end{proof}

\begin{lemma} \label{decomp22} 
Fix $n > 2b$, let $q, r$ be as defined in \eqref{ndiv} and let $H$ be a $2$-regular subgraph of $G_{a,b}(n)$ on $n$ vertices. Then there exists a unique compatible sequence $\mathbf{B} = (B_1, B_2, \ldots , B_q)$ of blocks, with $B_q$ an end block whose right column has length $r+1$, such that $H$ is isomorphic to $\mathcal{G}(\mathbf{B})$. Furthermore, if $H$ is a Hamiltonian cycle of $G_{a,b}(n)$, then $B_2, B_3 \ldots , B_{q-2}$ are mid blocks. 
\end{lemma}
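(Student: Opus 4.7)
The plan is to construct $\mathbf{B}$ by slicing $H$ along column boundaries and verifying that each slice has the required structure. For each $j\in\{1,\ldots,q\}$, I let $B_j$ be the subgraph on the vertices in columns $j-1$ and $j$ that contains: every edge of $H$ between these two columns, every edge of $H$ internal to column $j$, and additionally (only when $j=1$) every edge of $H$ internal to column $0$. Since every edge of $G_{a,b}(n)$ joins vertices in the same or adjacent columns, this assignment partitions the edge set of $H$ among the $B_j$ with no overlaps and no omissions.

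The next step is to check that each $B_j$ is a legal block of the advertised type. Every vertex in column $0$ has all its $G_{a,b}(n)$-neighbours in columns $0$ and $1$, so all its $H$-edges sit in $B_1$, forcing it to have degree $2$ in $B_1$; hence $B_1$ is a start block. A symmetric argument shows that every vertex in column $q$ has $B_q$-degree equal to its $H$-degree $2$, so $B_q$ is an end block with right column of length $r+1$. For $1<j<q$, the construction places no edges between two left-column vertices of $B_j$, so $B_j$ is either a mid block or an end block, depending on whether its right column contains a vertex of degree less than $2$. Compatibility is then immediate: the middle column of $\mathcal{G}(B_j,B_{j+1})$ is column $j$ of $G_{a,b}(n)$, every $H$-edge at a column-$j$ vertex is counted in exactly one of $B_j$ and $B_{j+1}$, and so each column-$j$ vertex inherits $H$-degree $2$ in the glued graph. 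Together with $B_1$ being a start block, the sequence is compatible, and $\mathcal{G}(\mathbf{B})\cong H$ holds tautologically.

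For uniqueness, I would argue that any compatible $\mathbf{B}'=(B_1',\ldots,B_q')$ with $\mathcal{G}(\mathbf{B}')\cong H$ must, under the isomorphism, align the column boundaries of $\mathcal{G}(\mathbf{B}')$ with those of $G_{a,b}(n)$. The requirement that $B_1'$ be a start block, and hence the only block in the sequence whose left column may contain internal edges, pins down the starting offset, forcing $B_j'$ to be isomorphic to $B_j$ for each $j$. The main technical nuisance here is formalising this column alignment cleanly, since blocks are defined only up to isomorphism, and I expect this to be the chief obstacle.

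The Hamiltonian-cycle strengthening is then a short cut argument. If $H$ is a Hamiltonian cycle and $2\le j\le q-2$, then the edge cut in $H$ separating columns $\{0,\ldots,j\}$ from $\{j+1,\ldots,q\}$ is non-trivial on both sides, so $H$ meets it in an even number of edges, at least $2$. All these cut edges lie between columns $j$ and $j+1$ and are assigned to $B_{j+1}$ by construction, so at least one vertex in column $j$ has strictly fewer than two of its $H$-edges in $B_j$; thus the right column of $B_j$ contains a vertex of degree at most $1$, making $B_j$ a mid block.
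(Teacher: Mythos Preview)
Your construction of the blocks $B_j$ and the verification that they form a compatible sequence are essentially identical to the paper's proof, which defines $B_1$ as the subgraph induced by columns $0$ and $1$, and $B_j$ for $j\ge 2$ as the subgraph induced by columns $j-1$ and $j$ with all edges internal to column $j-1$ deleted. Your uniqueness discussion is actually more explicit than the paper's, which simply asserts that $\mathbf{B}$ is the only compatible sequence with the stated properties.

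The one genuine point of departure is the Hamiltonian-cycle strengthening. The paper argues as follows: if $B_j$ were an end block for some $j<q$, then by Lemma~\ref{usecompatible} the graph $\mathcal{G}(B_1,\ldots,B_j)$ would already be $2$-regular, hence a union of cycles on a proper subset of the vertices, contradicting connectedness of the Hamiltonian cycle $H$; a symmetric argument handles start blocks appearing after $B_1$. Your edge-cut argument is a valid alternative: the cycle $H$ must cross the column cut between $j$ and $j+1$ at least twice, and each crossing edge is assigned to $B_{j+1}$ rather than $B_j$, so some right-column vertex of $B_j$ has degree at most~$1$. This is arguably more self-contained, since it does not invoke Lemma~\ref{usecompatible}, though the paper's approach has the pleasant feature of simultaneously ruling out interior start blocks and interior end blocks with a single connectivity observation. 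Both routes are short and equally convincing.
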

\begin{proof}
Observe that $H$ has at least three columns, since $n > 2b$ forces $q \ge 2$. Let $B_1$ be the subgraph induced by the first two columns (columns~0 and~1) of $H$, and for $2 \le j \le q$, let $B_j$ be the subgraph induced by columns~$j-1$ and~$j$ of~$H$, with every edge joining two vertices in column $j-1$ removed. Since~$H$ is 2-regular, it is clear that $\mathbf{B}$ is a compatible sequence of blocks such that $B_1$ is a start block, $B_q$ is an end block whose right column has length $r+1$, and $H$ is isomorphic to $\mathcal{G}(\mathbf{B})$. Furthermore, $\mathbf{B}$ is the only compatible sequence of blocks with these properties. 

Suppose that $H$ is a Hamiltonian cycle. Let $B_j$ be the first end block in $\mathbf{B}$. Then $\mathcal{G}(B_1, B_2, \ldots , B_j)$ is a 2-regular subgraph of $H$ by Lemma~\ref{usecompatible}. Since $H$ is connected, this subgraph must be all of $H$, so $j = q$. Similarly, let $B_k$ be the last start block in $\mathbf{B}$. Then again by Lemma~\ref{usecompatible}, $\mathcal{G}(B_k, B_{k+1}, \ldots , B_q)$ is a 2-regular subgraph of $H$, forcing $k = 1$. It follows that none of $B_2, B_3 \ldots , B_{q-2}$ is a start block or an end block, so they are all mid blocks. 
\end{proof}

The converse of the second statement of Lemma~\ref{decomp22} is not true, as sequences consisting of a start block, zero or more mid blocks and an end block may produce disconnected graphs. Figure~\ref{14blocks} shows eight $(1,4)$-blocks, where $B_1, B_2$ are start blocks, $B_3, B_4, B_5$ are mid blocks and $B_6, B_7, B_8$ are end blocks. The graph 
$\mathcal{G}(B_1, B_4, B_6)$ is isomorphic to the disjoint union of two 8-cycles as shown in Figure~\ref{BL}(c). 

\begin{figure}[h!]
\begin{center}
\begin{tikzpicture}[scale=0.75]

\def\u{2.5}
\def\v{5}
\def\w{7.5}
\def\x{10}
\def\y{12.5}
\def\z{15}
\def\zz{17.5}

\filldraw

(0,0) circle (2pt)
(0,1) circle (2pt)
(0,2) circle (2pt)
(0,3) circle (2pt)

(1,0) circle (2pt)
(1,1) circle (2pt)
(1,2) circle (2pt)
(1,3) circle (2pt)

(\u,0) circle (2pt)
(\u,1) circle (2pt)
(\u,2) circle (2pt)
(\u,3) circle (2pt)

(\u+1,0) circle (2pt)
(\u+1,1) circle (2pt)
(\u+1,2) circle (2pt)
(\u+1,3) circle (2pt)
(\v,0) circle (2pt)
(\v,1) circle (2pt)
(\v,2) circle (2pt)
(\v,3) circle (2pt)

(\v+1,0) circle (2pt)
(\v+1,1) circle (2pt)
(\v+1,2) circle (2pt)
(\v+1,3) circle (2pt)
%

%
(\w,0) circle (2pt)
(\w,1) circle (2pt)
(\w,2) circle (2pt)
(\w,3) circle (2pt)

(\w+1,0) circle (2pt)
(\w+1,1) circle (2pt)
(\w+1,2) circle (2pt)
(\w+1,3) circle (2pt)
(\x,0) circle (2pt)
(\x,1) circle (2pt)
(\x,2) circle (2pt)
(\x,3) circle (2pt)

(\x+1,0) circle (2pt)
(\x+1,1) circle (2pt)
(\x+1,2) circle (2pt)
(\x+1,3) circle (2pt)
(\y,0) circle (2pt)
(\y,1) circle (2pt)
(\y,2) circle (2pt)
(\y,3) circle (2pt)

(\y+1,0) circle (2pt)
(\y+1,1) circle (2pt)
(\y+1,2) circle (2pt)
(\y+1,3) circle (2pt)

(\z,0) circle (2pt)
(\z,1) circle (2pt)
(\z,2) circle (2pt)
(\z,3) circle (2pt)

(\z+1,0) circle (2pt)
(\z+1,1) circle (2pt)
(\z+1,2) circle (2pt)
(\z+1,3) circle (2pt)

(\zz,0) circle (2pt)
(\zz,1) circle (2pt)
(\zz,2) circle (2pt)
(\zz,3) circle (2pt)

(\zz+1,3) circle (2pt);


\draw

(1,3)--(0,3)--(0,2)--(1,2)
(1,1)--(0,1)--(0,0)--(1,0);

\draw 
(\u+1,3)--(\u+0,3)--(\u+0,0)--(\u+1,0)
(\u+1,1)--(\u+1,2);

\draw 
(\v+1,3)--(\v+0,3)
(\v+0,0)--(\v+1,0);


\draw 
(\w+0,0)--(\w+1,0)
(\w+0,1)--(\w+1,1)
(\w+0,2)--(\w+1,2)
(\w+0,3)--(\w+1,3);

\draw
(\x+0,0)--(\x+1,0)
(\x+0,1)--(\x+1,1)--(\x+1,2)--(\x+0,2)
(\x+0,3)--(\x+1,3);

\draw 
(\y+0,0)--(\y+1,0)--(\y+1,1)--(\y+0,1)
(\y+0,2)--(\y+1,2)--(\y+1,3)--(\y+0,3);

\draw 
(\z+0,0)--(\z+1,0)--(\z+1,3)--(\z+0,3);

\draw
(\zz,0)--(\zz+1,3)--(\zz,3);

\draw

(0.5,-0.5) node{$B_1$} 
(3,-0.5) node{$B_2$} 
(5.5,-0.5) node{$B_3$} 
(8,-0.5) node{$B_4$} 
(10.5,-0.5) node{$B_5$} 
(13,-0.5) node{$B_6$}
(15.5,-0.5) node{$B_7$}
(18,-0.5) node{$B_8$};

\end{tikzpicture}
\end{center}
\caption{Some $(1,4)$ blocks.}
\label{14blocks}
\end{figure}
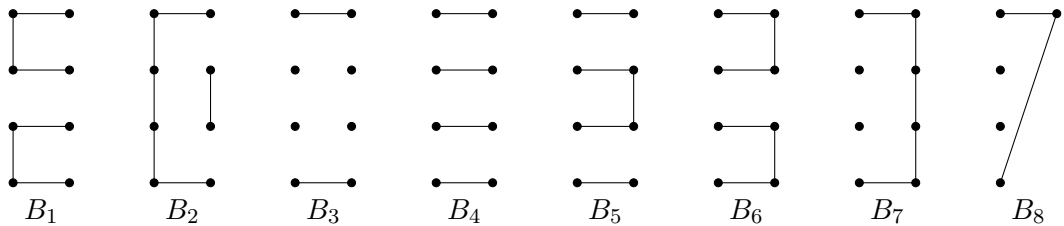

Lemmas~\ref{usecompatible} and~\ref{decomp22} establish that for any $n > 2b$, the 2-regular subgraphs of $G_{a,b}(n)$ are in one-to-one correspondence with the compatible sequences $\mathbf{B} = (B_1, B_2, \ldots , B_q)$ of $(a,b)$-blocks ending in an end block. This suggests the following construction of a weighted digraph $D'_{a,b}$. The vertex set of $D'_{a,b}$ consists of all $(a,b)$-blocks, identified as start, mid or end blocks, along with an additional distinguished start vertex $S$. Note that this is a finite set, since the collection of simple labeled graphs on at most~$2b$ vertices is finite and contains all blocks. The edge set of $D'_{a,b}$ is given as follows:

\begin{enumerate} \itemsep 0pt
    \item Place an edge $(S,B)$ of weight $2b$ joining $S$ to every start block $B$.
    \item For every compatible pair of blocks $(B, \hat{B})$, place an edge $(B, \hat{B})$ of weight $s$, where $s$ is the length of the right column of $\hat{B}$. 
\end{enumerate}

\begin{prop} \label{2regular}
For any $n > 2b$, the $2$-regular spanning subgraphs of $G_{a,b}(n)$ are in one-to-one correspondence with the walks $W$ in $D'_{a,b}$ from $S$ to any end block such that $S(W) = n$, with $S(W)$ as defined in \eqref{weightsum}. 
\end{prop}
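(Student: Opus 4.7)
The plan is to match walks in $D'_{a,b}$ with $2$-regular spanning subgraphs of $G_{a,b}(n)$ directly, using Lemma~\ref{decomp22} in one direction and Lemma~\ref{usecompatible} in the other; the construction of $D'_{a,b}$ is engineered exactly so these two lemmas produce a bijection.

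First I would unpack what a walk in $D'_{a,b}$ from $S$ to an end block looks like. By the edge set, any such walk has the form $W\colon S \to B_1 \to B_2 \to \cdots \to B_q$, where $B_1$ is a start block (the only blocks reachable from $S$), each pair $(B_j, B_{j+1})$ is compatible, and $B_q$ is an end block. Since only blocks with right column of length $b$ admit an outgoing type-$2$ edge, every $B_j$ with $1 \le j < q$ has right column of length $b$, so the sequence $\mathbf{B}=(B_1,\ldots,B_q)$ is a compatible sequence in the sense of Definition~\ref{decomp2renate} beginning at a start block and ending at an end block. Conversely, any such compatible sequence yields a walk from $S$ to an end block.

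Next I would compute the weight of $W$. The initial edge contributes $2b$; the $q-2$ intermediate edges each contribute $b$ (the common length of their targets' right columns); and the terminal edge contributes $s$, the length of the right column of $B_q$. Hence
\[
S(W) \;=\; 2b + (q-2)b + s \;=\; qb + s,
\]
which coincides with the number of vertices of $\mathcal{G}(\mathbf{B})$ as produced by Lemma~\ref{usecompatible}. Requiring $S(W)=n$ therefore forces $qb+s=n$, which by \eqref{ndiv} pins down $q=\lfloor(n-1)/b\rfloor$ and $s=r+1$.

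The bijection is now immediate. Given a $2$-regular spanning subgraph $H$ of $G_{a,b}(n)$, Lemma~\ref{decomp22} yields a unique compatible sequence $\mathbf{B}$ (with last block an end block of right column length $r+1$) such that $H\cong\mathcal{G}(\mathbf{B})$, and this corresponds to a walk from $S$ to an end block with $S(W)=qb+(r+1)=n$. Conversely, given such a walk, the associated sequence $\mathbf{B}$ is compatible, so by Lemma~\ref{usecompatible} the graph $\mathcal{G}(\mathbf{B})$ is isomorphic to a $2$-regular subgraph of $G_{a,b}(n)$ on $n$ vertices; the uniqueness clause of Lemma~\ref{decomp22} then forces these two assignments to be mutually inverse. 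The genuine content lives in Lemmas~\ref{usecompatible} and~\ref{decomp22}; the main obstacle here is simply a careful weight-accounting verification, with the one subtlety being that $B_q$ may, whereas earlier blocks may not, have a short right column.
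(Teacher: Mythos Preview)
Your proof is correct and follows essentially the same approach as the paper's own proof: both directions of the bijection are obtained by invoking Lemmas~\ref{usecompatible} and~\ref{decomp22}, and the weight computation $S(W)=2b+(q-2)b+s=qb+s=n$ is identical. Your write-up is somewhat more explicit than the paper's (you spell out why intermediate blocks must have full-length right columns and why $q$ and $s$ are determined by $n$ via~\eqref{ndiv}), but there is no substantive difference in strategy.
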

\begin{proof}
By Lemmas~\ref{usecompatible} and~\ref{decomp22}, the 2-regular spanning subgraphs of $G_{a,b}(n)$ are in bijection with the compatible sequences $\mathbf{B} = (B_1, B_2, \ldots , B_q)$  of blocks, where the right column of~$B_q$ has length $r+1$, with $r$ as in \eqref{ndiv}. Here, $B_1$ is a start block and $B_q$ is an end block. So every such sequence corresponds bijectively to the walk 
\[ W : S \stackrel{2b}{\longrightarrow} B_1 \stackrel{b}{\longrightarrow} B_2 \stackrel{b}{\longrightarrow} \ldots \stackrel{b}{\longrightarrow} B_{q-1} \stackrel{r+1}{\longrightarrow} B_q \]
in $D'_{a,b}$, and we have $S(W) = 2b + (q-2)b + r+1 = n$.
\end{proof}

Lemma \ref{recur} now immediately yields the following result.

\begin{cor}
The number of 2-regular spanning subgraphs of $G_{a,b}(n)$ satisfies a linear homogeneous recurrence relation.
\end{cor}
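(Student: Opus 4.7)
The plan is to derive the corollary as a direct consequence of Proposition~\ref{2regular} and Lemma~\ref{recur}, together with the standard fact that a finite sum of sequences satisfying linear homogeneous recurrences again satisfies such a recurrence. I will first dispose of the small cases $n \le 2b$ by absorbing them into initial conditions, since the count is trivially a finite list of integers there.

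For $n > 2b$, I would invoke Proposition~\ref{2regular} to rewrite the number of $2$-regular spanning subgraphs of $G_{a,b}(n)$ as
\[
  \sum_{B \text{ end block}} f_B(n),
\]
where $f_B(n)$ counts the walks $W$ in $D'_{a,b}$ from $S$ to $B$ with $S(W)=n$. The sum is finite because the vertex set of $D'_{a,b}$ is finite. Hence it suffices to show that each $f_B(n)$ satisfies a linear homogeneous recurrence and then combine these into a single recurrence.

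To handle the individual $f_B(n)$, I would apply the same construction used in Lemma~\ref{recur}: replace $D'_{a,b}$ by the unweighted digraph $(D'_{a,b})^\ast$ obtained by subdividing each edge of weight $m$ into a directed path of length $m$. Walks $W$ from $S$ to $B$ in $D'_{a,b}$ with $S(W) = n$ are then in bijection with walks of length $n$ from $S$ to $B$ in $(D'_{a,b})^\ast$. By the standard transfer matrix argument (Theorem~4.7.1 and Corollary~4.7.4 of~\cite{RS}), the number of walks of a given length between any fixed pair of vertices in a finite directed graph satisfies the linear homogeneous recurrence whose characteristic polynomial is (a factor of) the characteristic polynomial of the adjacency matrix of $(D'_{a,b})^\ast$. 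Thus each $f_B(n)$ obeys such a recurrence.

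Finally, I would combine these finitely many recurrences. Since the set of sequences satisfying any linear homogeneous recurrence with constant integer coefficients is closed under addition (one may, for instance, take as a common annihilator the product of the individual characteristic polynomials), the total $\sum_B f_B(n)$ satisfies a linear homogeneous recurrence as well. No step presents a genuine obstacle: the heavy lifting has already been completed in Proposition~\ref{2regular} and Lemma~\ref{recur}, and the only mild subtlety is the refinement of Lemma~\ref{recur} from walks with arbitrary endpoints to walks between a fixed pair of vertices, which is immediate from its proof.
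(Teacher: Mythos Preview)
Your proposal is correct and follows the same approach as the paper. The paper's proof is a single sentence invoking Lemma~\ref{recur} after Proposition~\ref{2regular}; your version merely makes explicit the step, already contained in the proof of Lemma~\ref{recur}, that walk counts between each fixed pair of vertices satisfy a linear homogeneous recurrence and that a finite sum of such counts does as well.
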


\subsection{Construction for Hamiltonian Cycles of $G_{a,b}$} \label{Dab}

The relationship between 2-regular spanning subgraphs $H$ of $G_{a,b}(n)$ and walks $W$  in $D'_{a,b}$ of weight sum $n$ connecting $S$ to an end block is highly constructive: the sequence of vertices in $W$, save its first vertex, is precisely the sequence of blocks constituting $H$. In order to obtain a Hamiltonian cycle $H$, the corresponding walk $W$ in $D'_{a,b}$ must not pass through blocks that result in multiple short cycles. The digraph $D_{a,b}$ will be constructed from $D'_{a,b}$ in a manner that avoids blocks which close such short cycles. Moreover, it will only contain walks in which $S$ is followed by a start block, zero or more mid blocks and an end block, as this is a necessary condition for generating a Hamiltonian cycle by Lemma~\ref{decomp22}. 

Consider for example the two compatible sequences depicted in Figure~\ref{14gluedgraphs}. The pathwise connected pairs of vertices in the rightmost column of~$\mathcal{G}(B_1,B_4)$ belong to different rows compared to those in the rightmost column of $\mathcal{G}(B_2,B_4)$. Appending $B_6$ to either of these graphs introduces two new paths connecting vertices in the rightmost column of the graph, one connecting the vertices in rows~0 and~1, and the other connecting the vertices in rows~2 and~3. In $\mathcal{G}(B_1,B_4)$, the two top vertices in the rightmost column are already pathwise connected, as are the two bottom vertices in its rightmost column. So appending $B_6$ to $\mathcal{G}(B_1,B_4)$ results in two short cycles as seen in Figure~\ref{BL}(c). The pathwise connected vertex pairs in the rightmost column of $\mathcal{G}(B_2,B_4)$ do not match up with those in the left column of $B_6$, so no short cycle is produced; instead, the graph  $\mathcal{G}(B_2,B_4, B_6)$ is one large cycle as shown in Figure~\ref{BL}(d). 

In general, we will see that for any compatible sequence $\mathbf{B}$, the pathwise connected pairs of vertices in the rightmost column of~$\mathcal{G}(\mathbf{B})$ determine whether appending a particular block produces a Hamiltonian cycle. We will also see that it is not necessary to know the entire sequence $\mathbf{B}$ in order to ascertain if appending a block to $\mathbf{B}$ creates an acyclic graph, a small cycle or a Hamiltonian cycle. To that end, we endow each block $B$ with an additional label that contains a list of pairs of endpoints of certain paths; these augmented blocks form the vertices of $D_{a,b}$. Edges in $D_{a,b}$ are defined in such a way that appending a mid block produces an acyclic graph and appending an end block produces one long cycle.

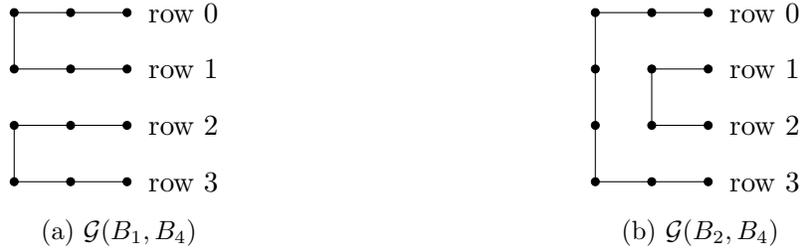
\begin{figure}[h!]
    \centering
    \begin{subfigure}[t]{0.5\textwidth}
        \centering
     \begin{tikzpicture}[scale=0.75]
     
\filldraw

(0,0) circle (2pt)
(0,1) circle (2pt)
(0,2) circle (2pt)
(0,3) circle (2pt)

(1,0) circle (2pt)
(1,1) circle (2pt)
(1,2) circle (2pt)
(1,3) circle (2pt)

(2,0) circle (2pt)
(2,1) circle (2pt)
(2,2) circle (2pt)
(2,3) circle (2pt);

\draw

(2,2)--(0,2)--(0,3)--(2,3)
(2,1)--(0,1)--(0,0)--(2,0);

\draw
(3,0) node{row 3}
(3,1) node{row 2}
(3,2) node{row 1}
(3,3) node{row 0};

\end{tikzpicture}
\caption{ $\mathcal{G}(B_1,B_4)$}
\label{hiya-ee}
\end{subfigure}%
~ 
\begin{subfigure}[t]{0.5\textwidth}
\centering
\begin{tikzpicture}[scale=0.75]
\filldraw

(0,0) circle (2pt)
(0,1) circle (2pt)
(0,2) circle (2pt)
(0,3) circle (2pt)

(1,0) circle (2pt)
(1,1) circle (2pt)
(1,2) circle (2pt)
(1,3) circle (2pt)

(2,0) circle (2pt)
(2,1) circle (2pt)
(2,2) circle (2pt)
(2,3) circle (2pt);

\draw

(2,0)--(0,0)--(0,3)--(2,3)
(2,2)--(1,2)--(1,1)--(2,1);

\draw 
(3,0) node{row 3}
(3,1) node{row 2}
(3,2) node{row 1}
(3,3) node{row 0};

\end{tikzpicture}
    \caption{$\mathcal{G}(B_2,B_4)$}
\end{subfigure}
\caption{Two sequences of blocks from Figure~\ref{14blocks}, with row labels.}
\label{14gluedgraphs}
\end{figure}

\begin{defn} \label{L(H)}
Let $H$ be a graph with maximum vertex degree~2 that is isomorphic to a spanning subgraph of $G_{a,b}(n)$ for some $n > b$. Define the set $\mathcal{L}(H)$ to consist of all unordered pairs $\{i,j\}$ with $0 \le i, j \le b-1$ such that  $H$ contains a path whose end vertices are located in its rightmost column at rows $i$ and $j$.
\end{defn}

For example, for the blocks $B_1, B_2$ in Figure~\ref{14blocks} and the two graphs in Figure~\ref{14gluedgraphs}, we have $\mathcal{L}(B_1) = \mathcal{L}(\mathcal{G}(B_1,B_4)) = \big \{ \{0,1\}, \{2,3\} \big \}$ and $\mathcal{L}(B_2) = \mathcal{L}(\mathcal{G}(B_2,B_4)) = \big \{ \{0,3\},\{1,2\} \big \}$. 

\begin{defn} \label{B^L}
Let $B$ be a mid or end block and $L$ any set of unordered pairs $\{i,j\}$ with $0 \leq i, j \leq b-1$. Define $B^L$ to be the graph obtained from $B$ by adding an edge joining the $i$-th and $j$-th vertices in the left column of $B$ whenever $\{i,j\} \in L$.
\end{defn}

Figure~\ref{BL} provides examples of graphs $B^L$ with $L = \mathcal{L}(\mathcal{G}(\mathbf{B}))$ for four compatible sequences $\mathbf{B}$ of blocks from Figure~\ref{14blocks}. 

\begin{figure}[h!]
\centering
 \begin{subfigure}[t]{0.44\textwidth}
    \centering
    \begin{tikzpicture}[scale=0.75]
     
\def\u{5}

\filldraw

(0,0) circle (2pt)
(0,1) circle (2pt)
(0,2) circle (2pt)
(0,3) circle (2pt)

(1,0) circle (2pt)
(1,1) circle (2pt)
(1,2) circle (2pt)
(1,3) circle (2pt)

(2,0) circle (2pt)
(2,1) circle (2pt)
(2,2) circle (2pt)
(2,3) circle (2pt);

\draw

(2,0)--(0,0)--(0,1)--(2,1)--(2,2)--(0,2)--(0,3)--(2,3);
 
\filldraw

(\u,0) circle (2pt)
(\u,1) circle (2pt)
(\u,2) circle (2pt)
(\u,3) circle (2pt)

(\u+1,0) circle (2pt)
(\u+1,1) circle (2pt)
(\u+1,2) circle (2pt)
(\u+1,3) circle (2pt);

\draw 

(\u+1,0)--(\u+0,0)--(\u+0,1)--(\u+1,1)--(\u+1,2)--(\u+0,2)--(\u+0,3)--(\u+1,3);
\end{tikzpicture}
\caption{$\mathcal{G}(B_1,B_5)$ and $B_5^{\mathcal{L}(B_1)}$} \label{B1B5}
\end{subfigure}
~
 \begin{subfigure}[t]{0.45\textwidth}
        \centering
\begin{tikzpicture}[scale=0.75]

\def\u{5}

\filldraw
(0,0) circle (2pt)
(0,1) circle (2pt)
(0,2) circle (2pt)
(0,3) circle (2pt)

(1,0) circle (2pt)
(1,1) circle (2pt)
(1,2) circle (2pt)
(1,3) circle (2pt)

(2,0) circle (2pt)
(2,1) circle (2pt)
(2,2) circle (2pt)
(2,3) circle (2pt);

\draw 

(2,0)--(0,0)--(0,3)--(2,3)
(2,2)--(1,2)--(1,1)--(2,1)--(2,2);

\filldraw

(\u,0) circle (2pt)
(\u,1) circle (2pt)
(\u,2) circle (2pt)
(\u,3) circle (2pt)

(\u+1,0) circle (2pt)
(\u+1,1) circle (2pt)
(\u+1,2) circle (2pt)
(\u+1,3) circle (2pt);

\draw

(\u+0,3) arc (130:230:2)
(\u+1,0)--(\u+0,0)
(\u+1,3)--(\u+0,3)
(\u+1,1)--(\u+0,1)--(\u+0,2)--(\u+1,2)--(\u+1,1);

\end{tikzpicture}
\caption{$\mathcal{G}(B_2,B_5)$ and $B_5^{\mathcal{L}(B_2)}$} \label{B2B5}
\end{subfigure}

\bigskip

\begin{subfigure}[b]{0.45\textwidth}
       
\centering
\begin{tikzpicture}[scale=0.75]
     
\def\u{5}

\filldraw

(0,0) circle (2pt)
(0,1) circle (2pt)
(0,2) circle (2pt)
(0,3) circle (2pt)

(1,0) circle (2pt)
(1,1) circle (2pt)
(1,2) circle (2pt)
(1,3) circle (2pt)

(2,0) circle (2pt)
(2,1) circle (2pt)
(2,2) circle (2pt)
(2,3) circle (2pt)

(3,0) circle (2pt)
(3,1) circle (2pt)
(3,2) circle (2pt)
(3,3) circle (2pt);

\draw

(3,0)--(0,0)--(0,1)--(3,1)--(3,0)
(3,2)--(0,2)--(0,3)--(3,3)--(3,2);

\filldraw
(\u,0) circle (2pt)
(\u,1) circle (2pt)
(\u,2) circle (2pt)
(\u,3) circle (2pt)

(\u+1,0) circle (2pt)
(\u+1,1) circle (2pt)
(\u+1,2) circle (2pt)
(\u+1,3) circle (2pt);

\draw 

(\u+1,0)--(\u+0,0)--(\u+0,1)--(\u+1,1)--(\u+1,0)
(\u+1,2)--(\u+0,2)--(\u+0,3)--(\u+1,3)--(\u+1,2);

\end{tikzpicture}
\caption{$\mathcal{G}(B_1,B_4, B_6)$ and $B_6^{\mathcal{L}(B_1, B_4)}$} \label{B1B4B6}
\end{subfigure}
~
\begin{subfigure}[b]{0.45\textwidth}
\centering
\begin{tikzpicture}[scale=0.75]

\def\w{5}

\filldraw
(0,0) circle (2pt)
(0,1) circle (2pt)
(0,2) circle (2pt)
(0,3) circle (2pt)

(1,0) circle (2pt)
(1,1) circle (2pt)
(1,2) circle (2pt)
(1,3) circle (2pt)

(2,0) circle (2pt)
(2,1) circle (2pt)
(2,2) circle (2pt)
(2,3) circle (2pt)

(3,0) circle (2pt)
(3,1) circle (2pt)
(3,2) circle (2pt)
(3,3) circle (2pt);

\draw 

(3,0)--(0,0)--(0,3)--(3,3)--(3,2)--(1,2)--(1,1)--(3,1)--(3,0);

\filldraw
(\w,0) circle (2pt)
(\w,1) circle (2pt)
(\w,2) circle (2pt)
(\w,3) circle (2pt)

(\w+1,0) circle (2pt)
(\w+1,1) circle (2pt)
(\w+1,2) circle (2pt)
(\w+1,3) circle (2pt);

\draw

(\w+0,3) arc (130:230:2)
(\w,3)--(\w+1,3)--(\w+1,2)--(\w,2)--(\w,1)--(\w+1,1)--(\w+1,0)--(\w,0);

\end{tikzpicture}
\caption{$\mathcal{G}(B_2,B_4, B_6)$ and $B_6^{\mathcal{L}(B_2, B_4)}$} \label{B2B4B6}
\end{subfigure}
\caption{Graphs $\mathcal{G}(\mathbf{B},B)$ and $B^{\mathcal{L}(\mathbf{B})}$ using blocks from Figure~\ref{14blocks}.}
\label{BL}
\end{figure}
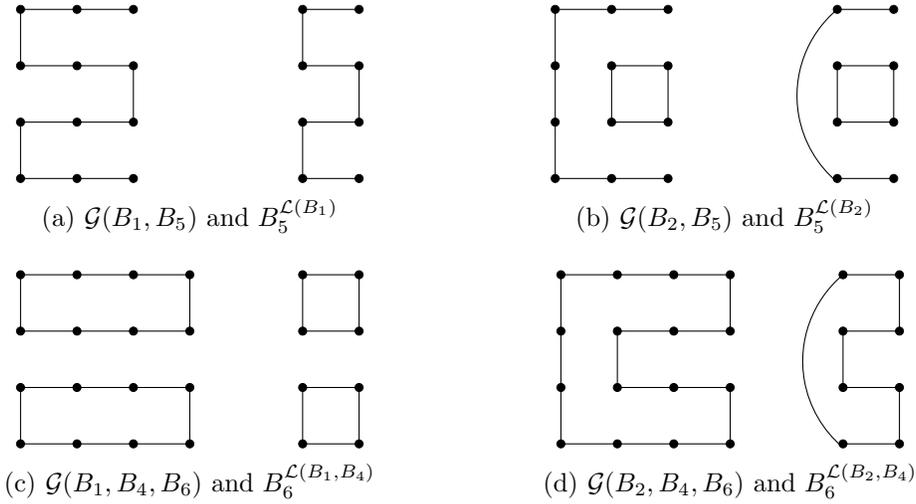

The motivation behind Definitions~\ref{L(H)} and~\ref{B^L} is illustrated in Figure~\ref{BL} which shows examples of graphs $\mathcal{G}(\mathbf{B},B)$ where $\mathbf{B}$ is one of four compatible sequences of blocks from Figure~\ref{14blocks} and $B$ is a block that is compatible with $\mathbf{B}$. For each of these graphs, the figure also depicts the associated graph $B^L$ where $L = \mathcal{L}(\mathcal{G}(\mathbf{B}))$. Note that every~$B^L$ has maximum  degree~2. For any two vertices $x,y$ in the left column of $B$, appending~$B$ to $\mathbf{B}$ potentially introduces a path through $\mathcal{G}(\mathbf{B})$ joining $x$ to $y$. In $B^L$, that path corresponds to an edge joining $x$ to $y$. Consequently, the path and cycle structure of $\mathcal{G}(\mathbf{B},B)$ is completely reflected in $B^L$. Paths with endpoints in the rightmost column of $\mathcal{G}(\mathbf{B},B)$ correspond to paths in $B^L$ with corresponding end points in its right column, so $\mathcal{L}(\mathcal{G}(\mathbf{B},B)) = \mathcal{L}(B^L)$. Cycles in $\mathcal{G}(\mathbf{B},B)$ correspond to cycles in $B^L$, and $\mathcal{G}(\mathbf{B},B)$ is a cycle if and only if $B^L$ is a cycle (along with possibly some isolated vertices). Consequently, it is sufficient to know~$B$ and $L$ only, rather than the entire sequence $\mathbf{B}$, to ascertain whether or not appending $B$ to $\mathbf{B}$ produces a Hamiltonian cycle. Here, inductively, $L$ is determined by the previous block, i.e.\ the last block in $\mathbf{B}$. The following lemma formulates this result more formally and provides a proof.

\begin{lemma}\label{useL}
Let $\mathbf{B}$ be a sequence of compatible blocks such that all but the first block in $\mathbf{B}$ are mid blocks. Let $B$ be a block such that $(\mathbf{B}, B)$ is compatible. Put $G = \mathcal{G}(\mathbf{B})$, $G' = \mathcal{G}(\mathbf{B}, B)$, $L = \mathcal{L}(G)$, and assume that $G$ is acyclic. Then the following conditions hold.

\begin{enumerate} \itemsep 0pt
    \item[(i)] $B^L$ has maximum degree $2$ and all the vertices in its left column have degree $0$ or $2$;
    \item[(ii)] $\mathcal{L}(G') = \mathcal{L}(B^L)$;
    \item[(iii)] The cycles in $G'$ are in one-to-one correspondence with those in $B^L$;
    \item[(iv)] Suppose $B$ is an end block. Then $G'$ is a cycle if and only if the graph obtained from $B^L$ by removing all isolated vertices is a cycle.
\end{enumerate}
\end{lemma}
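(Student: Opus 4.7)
The plan rests on one structural observation that I would establish first: because $\mathbf{B}$ consists of a start block followed by (possibly zero) mid blocks and all consecutive pairs are compatible, every vertex of $G=\mathcal{G}(\mathbf{B})$ outside the rightmost column has degree~$2$ in $G$. Together with the acyclicity hypothesis, this forces $G$ to be a disjoint union of simple paths (and possibly isolated vertices), with \emph{every} path endpoint lying in the rightmost column of $G$; consequently $L=\mathcal{L}(G)$ records exactly how the path components of $G$ attach to that shared column. For part~(i) I would then fix a left-column vertex $v$ of $B$ at row $i$ and compute $\deg_{B^L}(v)$ directly. Compatibility of $(\mathbf{B},B)$ gives $\deg_G(v)+\deg_B(v)=2$, and I case-split on $\deg_G(v)\in\{0,1,2\}$: if $\deg_G(v)=2$ then $v$ is interior to a $G$-path so $i$ appears in no pair of $L$ and $\deg_{B^L}(v)=0$; if $\deg_G(v)=1$ then $v$ is a path endpoint whose partner endpoint must, by the structural observation, lie in the shared column at some row $j$, contributing the unique pair $\{i,j\}\in L$ and giving $\deg_{B^L}(v)=1+1=2$; if $\deg_G(v)=0$ then $v$ is isolated in $G$, no pair of $L$ contains $i$, and $\deg_{B^L}(v)=2+0=2$. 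Right-column vertices of $B^L$ receive no added edges, so their degrees equal their degrees in $B$ and are at most~$2$.

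For (ii) and (iii) I would construct a bijection between the simple paths and cycles of $G'$ and those of $B^L$ that preserves endpoints. Every edge of $G'$ belongs to exactly one of $G$ or $B$ (since the mid/end block condition forbids $B$-edges inside the shared column), so any simple walk in $G'$ decomposes uniquely into an alternating sequence of \emph{$B$-arcs} (maximal subwalks of $B$-edges) and \emph{$G$-arcs} (maximal subwalks of $G$-edges), and each $G$-arc that is not a walk endpoint enters and leaves the shared column at two distinct rows which, by the structural observation, form a pair in $L$ and hence correspond to a single added edge of $B^L$. Replacing every such $G$-arc by its associated added edge turns the walk into a simple walk in $B^L$ with the same endpoints; the inverse expansion replaces each added edge in a $B^L$-walk by its unique realization as a $G$-path (vertex-disjoint for distinct added edges because distinct path components of $G$ are vertex-disjoint). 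Applying this correspondence to walks with endpoints in the right column of $B$ gives $\mathcal{L}(G')=\mathcal{L}(B^L)$, and applying it to cycles gives the bijection of~(iii); cycles of $G'$ lying entirely inside $B$ simply correspond to cycles of $B^L$ using no added edges.

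For~(iv) the end-block hypothesis forces every right-column vertex of $B$, and hence of $B^L$, to have degree~$2$, while~(i) gives degree~$0$ or~$2$ for every left-column vertex, so $B^L$ is a disjoint union of cycles and isolated vertices. The same end-block hypothesis together with full compatibility makes $G'$ itself $2$-regular, so $G'$ is a disjoint union of cycles. By~(iii) these cycles are in bijection with the cycles of $B^L$, so $G'$ is a single (necessarily Hamiltonian) cycle precisely when $B^L$ contains exactly one cycle, i.e.\ when deleting the isolated vertices of $B^L$ leaves a cycle. The main technical obstacle is making the walk-decomposition in (ii)/(iii) fully rigorous: one must verify carefully that simplicity of the walk prevents revisiting any shared-column row, that the alternating $B$-arc/$G$-arc decomposition is uniquely determined, and that the contraction and expansion operations are genuine two-sided inverses. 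Everything else reduces to degree bookkeeping driven by the structural observation in the first paragraph.
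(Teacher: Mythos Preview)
Your proposal is correct and follows essentially the same route as the paper. The paper's proof of (i) case-splits on $\deg_B(x)$ rather than on $\deg_G(v)$, but these are equivalent via the compatibility relation $\deg_G(v)+\deg_B(v)=2$; for (ii) and (iii) the paper writes the same alternating decomposition $P_1Q_1\cdots P_{k-1}Q_{k-1}P_k \leftrightarrow P_1e_1\cdots P_{k-1}e_{k-1}P_k$ (your ``$B$-arcs'' and ``$G$-arcs''), and (iv) is deduced from (i), (iii) and $2$-regularity exactly as you outline.
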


\begin{proof} 
(i) We note that all the vertices in the right columns of $B$ and $B^L$ have same degree (in fact, the same adjacencies), which is at most~2 since $B$ is a block. Now let $x$ be any vertex in the left column of $B$, let $i$ be its row index, and let $y$ be the $i$-th vertex in the rightmost column of $G$. If $x$ has degree~0 or~2 in~$B$, then $y$ has degree~2 or~0 in $G$, since $(\mathbf{B}, B)$ is compatible.  Either way, $y$ is not the end point of any path in $G$. So no pair in~$L$ contains row index $i$, and no edge incident with $x$ is added to $B$ to obtain $B^L$. Thus, $x$ has the same degree in $B$ and $B^L$, namely~0 or~2. If $x$ has degree~1 in $B$, then $y$ has degree~1 in $G$ by compatibility and is hence the end point of a path in $G$. This is the only path in $G$ ending at $y$, since~$G$ has maximum degree~2. Thus, a single edge incident with $x$ is added to $B$ to form $B^L$. Consequently, $x$ has degree~2 in $B^L$. 
    
(ii) We first note that both $B^L$ and $G'$ have maximum degree~2, so~$\mathcal{L}(G')$ and~$\mathcal{L}(B^L)$ are defined. All paths in~$G$ and~$G'$ have endpoints in their respective rightmost columns since all their constituents blocks are compatible, and the same holds for~$B^L$ by part (i). Let $i, j \in \{ 0, 1, \ldots , b-1 \}$ be arbitrary, with $i \ne j$. We wish to prove that ${i,j} \in \mathcal{L}(G')$ if and only if ${i,j} \in \mathcal{L}(B^L)$. Any path connecting the $i$-th and $j$-th vertices in the rightmost column of $G$ corresponds uniquely to an edge in $B^L$ joining the $i$-th and $j$-th vertices in its first column. We prove that this correspondence extends to a one-to-one correspondence between the paths in $G'$ and $B^L$.
Consider $G$ and $B$ as subgraphs of $G'$ that share the second rightmost column of $G'$. Then every path in $G'$ is of the form 
\begin{equation}\label{pathform2}
P_1Q_1 \cdots  P_{k-1}Q_{k-1}P_k, 
\end{equation} 
where $P_1, \ldots , P_k$ are paths in $B$ and $Q_1, \ldots , Q_{k-1}$ are paths in $G$. Since $B$ is a subgraph of $B^L$, all the paths $P_1, \ldots , P_k$ are also paths in $B^L$, and the path given in \eqref{pathform2} corresponds to the path
\begin{equation}\label{pathform}
P_1 e_1\cdots P_{k-1} e_{k-1} P_k.
\end{equation} 
in $B^L$ where each edge $e_i$ corresponds to the path $Q_i$; see Figure~\ref{paths}. Conversely, every path in $B^L$ is of the form \eqref{pathform}, where $P_1, \ldots , P_k$ are paths in $B$ and $e_1, \ldots e_{k-1}$ are edges in the left column of $B^L$. Each edge $e_i$ in $B^L$ corresponds to a path $Q_i$ in $G$ with the same end points, yielding a path in $G'$ of the form \eqref{pathform2}.

\begin{figure}
\centering
\begin{tikzpicture}[scale=0.75]

\filldraw

(2,1) circle (2pt)
(2,2) circle (2pt)
(2,3) circle (2pt)
(2,4) circle (2pt)

(3,1) circle (2pt)
(3,2) circle (2pt)
(3,3) circle (2pt)
(3,4) circle (2pt);

\draw[thick, dashed]
(1.1,4) arc (-90:90:-0.5)
(1.1,2) arc (-90:90:-0.5);

\draw
(1,4)--(2,4)
(1,3)--(2,3)
(1,2)--(2,2)
(1,1)--(2,1)
(0.2,3.5) node{$Q_1$}
(0.2,1.5) node{$Q_2$}
(2.6,4.4) node{$P_1$}
(3.1,2.5) node{$P_2$}
(2.6,0.6) node{$P_3$}
(1.6,-0.5) node{$G'$};

\draw[very thick]
(2,4)--(3,4)
(2,3)--(3,3)--(2,2)
(2,1)--(3,1);

\def\u{6}

\filldraw

(\u+2,1) circle (2pt)
(\u+2,2) circle (2pt)
(\u+2,3) circle (2pt)
(\u+2,4) circle (2pt)

(\u+3,1) circle (2pt)
(\u+3,2) circle (2pt)
(\u+3,3) circle (2pt)
(\u+3,4) circle (2pt);

\draw
(\u+2,1) arc (230:130:0.65)
(\u+2,3) arc (230:130:0.65)

(\u+1.3,3.5) node{$e_{Q_1}$}
(\u+1.3,1.5) node{$e_{Q_2}$}
(\u+2.6,4.4) node{$P_1$}
(\u+3.1,2.5) node{$P_2$}
(\u+2.6,0.6) node{$P_3$}
(\u+2.5,-0.5) node{$B^L$};

\draw[very thick]
(\u+2,4)--(\u+3,4)
(\u+2,3)--(\u+3,3)
(\u+2,2)--(\u+3,3)
(\u+2,1)--(\u+3,1);

\end{tikzpicture}
\caption{Corresponding paths in $G'$ and $B^L$.} \label{paths}
\end{figure}
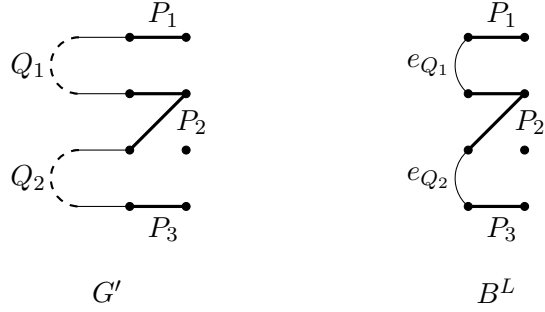

(iii) Since $G$ is acyclic, every cycle in $G'$ must contain a vertex in its rightmost column and is hence of the form~\eqref{pathform2}, where the initial vertex of $P_1$ and the terminal vertex of $P_k$ are identical and this vertex belongs to the rightmost column of $G'$. Part~(ii) now yields the desired result.

(iv) Suppose $G'$ is a cycle. By part (iii), $G'$ corresponds to a unique cycle $C$ in $B^L$, i.e. $C$ is the only cycle in $B^L$. Since $B$ is an end block, every vertex in the right column of $B^L$ has degree 2. By part (i), all the vertices in $B^L \setminus C$ have degree 0 and are hence isolated. 

Conversely, suppose $B^L$ consists of a cycle and zero or more isolated vertices. Since $B$ is an end block, $G'$ is 2-regular. By part (iii), $G'$ contains a unique cycle and is therefore itself a cycle. 
\end{proof}

We now have all the ingredients to construct $D_{a,b}$. This directed graph will contain four types of vertices:

\begin{enumerate} \itemsep 0pt
    \item A distinguished start vertex $S$;
    \item Vertices of the form $(B,L)$ where $B$ is any start block and $L = \mathcal{L}(B)$;
    \item Vertices of the form $(B,L)$ where $B$ is any mid block and $L$ is any set of zero or more pairs $\{ i, j \}$ with $0 \le i, j \le b-1$;
    \item End vertices $B$ where $B$ is an end block. 
\end{enumerate}
Once again, this collection of vertices is finite, since there are finitely many blocks and finitely many sets $L$. The edge set of $D_{a,b}$ is given as follows:

\begin{enumerate} \itemsep 0pt
    \item For any vertex of the form $(B, L)$, place an edge $\big (S,(B,L) \big )$ of weight $2b$ whenever $B$ is a start block, $L = \mathcal{L}(B)$ and $B$ is acyclic.
    \item For any two vertices of the form $(B,L), (\hat{B},\hat{L})$, place an edge $\big ( (B,L), (\hat{B},\hat{L}) \big)$ of weight $b$ whenever $\hat{B}$ is a mid block, $(B, \hat{B})$ is compatible, $\mathcal{L}(\hat{B}^L) = \hat{L}$ and $\hat{B}^L$ is acyclic. 
    \item For any two vertices of the form $(B,L), \hat{B}$, place an edge $\big ( (B,L), \hat{B} \big )$ of weight $s$, where $s$ is the length of the right column of $\hat{B}$, whenever $(B, \hat{B})$ is compatible and the graph obtained from $\hat{B}^L$ by removing all its isolated vertices is a cycle.
\end{enumerate}

Note that, implicitly, $B$ in step 2 is a start or mid block and $\hat{B}$ in step 3 is an end block.

Figure~\ref{D14} shows the subdigraph of $D_{1,4}$ induced by the vertices $S$ and $B^L$ where $B$ is any block of~Figure~\ref{14blocks} and $L_1 = \big \{\{0,1\},\{2,3\} \big \}$, $L_2 = \big \{\{0,3\},\{1,2\} \big \}$, and $L_3 = \big \{\{0,3\} \big \}$.

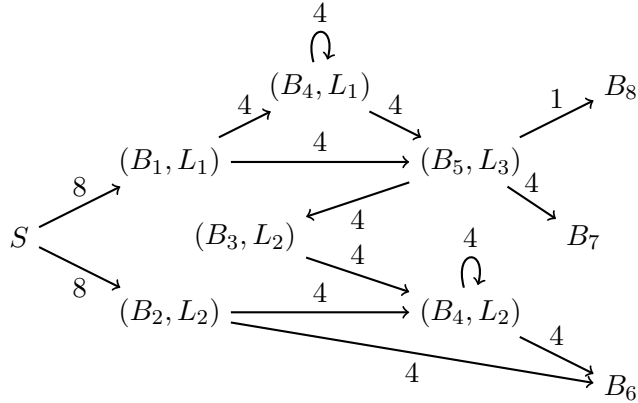
\begin{figure}[h!]
\begin{center}
\begin{tikzpicture}[scale=1]

\node (1) at (-1,0) {$S$};
\node (2) at (1,-1) {$(B_2,L_2)$};
\node (3) at (1,1) {$(B_1,L_1)$};
\node (4) at (3,2) {$(B_4,L_1)$};
\node (5) at (5,1) {$(B_5,L_3)$};
\node (6) at (5,-1) {$(B_4,L_2)$};
\node (7) at (2,0) {$(B_3,L_2)$};
\node (8) at (7,2) {$B_8$};
\node (9) at (6.5,0) {$B_7$};
\node (10) at (7,-2) {$B_6$};

\path[thick,->]
(1) edge node[below] {$8$} (2)
(1) edge node[above] {$8$} (3)
(2) edge node[below] {$4$} (10)
(2) edge node[above] {$4$} (6)
(3) edge node[above] {$4$} (4)
(3) edge node[above] {$4$} (5)
(4) edge node[above] {$4$} (5)
(5) edge node[below] {$4$} (7)
(7) edge node[above] {$4$} (6)
(5) edge node[above] {$1$} (8)
(5) edge node[above] {$4$} (9)
(6) edge node[above] {$4$} (10)
(4) edge[loop above] node {$4$} (4)
(6) edge[loop above] node {$4$} (6);

\end{tikzpicture}
\end{center}
\caption{A subdigraph of $D_{1,4}$}
\label{D14}
\end{figure}

\begin{theorem} \label{mainthm}
For any $n > 2b$, the Hamiltonian cycles of $G_{a,b}(n)$ are in one-to-one correspondence with the walks $W$ in $D_{a,b}$ from $S$ to any end vertex such that $S(W) = n$, with $S(W)$ as defined in \eqref{weightsum}.
\end{theorem}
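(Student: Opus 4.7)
The plan is to build explicit maps $\Phi$ (Hamiltonian cycles to walks) and $\Psi$ (walks to Hamiltonian cycles) and show they are mutually inverse. Given a Hamiltonian cycle $H$ in $G_{a,b}(n)$ with $n > 2b$, Lemma \ref{decomp22} supplies the unique compatible block decomposition $\mathbf{B} = (B_1, \ldots, B_q)$ with $B_1$ a start block, $B_q$ an end block of right-column length $r+1$ (where $n-1 = qb + r$), and $B_2, \ldots, B_{q-1}$ all mid blocks; note $q \ge 2$ since $n > 2b$. Setting $L_j = \mathcal{L}(\mathcal{G}(B_1, \ldots, B_j))$ for $1 \le j \le q-1$, I associate to $H$ the walk
\[ \Phi(H):\ S \xrightarrow{2b} (B_1, L_1) \xrightarrow{b} (B_2, L_2) \xrightarrow{b} \cdots \xrightarrow{b} (B_{q-1}, L_{q-1}) \xrightarrow{r+1} B_q, \]
which collapses to $S \to (B_1, L_1) \to B_q$ when $q = 2$. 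To check this is a valid walk in $D_{a,b}$, the initial edge needs $B_1$ acyclic, which holds because $B_1$ is a proper subgraph of the single cycle $H$ and hence a union of paths; each intermediate edge needs compatibility (built into Lemma \ref{decomp22}), the relation $L_{j+1} = \mathcal{L}(B_{j+1}^{L_j})$ (Lemma \ref{useL}(ii)), and acyclicity of $B_{j+1}^{L_j}$ (Lemma \ref{useL}(iii), since $\mathcal{G}(B_1, \ldots, B_{j+1})$ is a proper subgraph of the cycle $H$); and the terminal edge needs $B_q^{L_{q-1}}$ minus its isolated vertices to be a cycle, which is Lemma \ref{useL}(iv) applied to $H = \mathcal{G}(B_1, \ldots, B_q)$. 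A direct computation gives $S(\Phi(H)) = 2b + (q-2)b + (r+1) = qb + r + 1 = n$.

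Conversely, the out-edge structure of $D_{a,b}$ forces any walk $W$ from $S$ to an end vertex to have the form $S \to (B_1, L_1) \to \cdots \to (B_{q-1}, L_{q-1}) \to B_q$ with $B_1$ a start block, $B_2, \ldots, B_{q-1}$ mid blocks, and $B_q$ an end block, and the constraint $S(W) = n$ pins the right-column length of $B_q$ to $r+1$. I define $\Psi(W) = \mathcal{G}(B_1, \ldots, B_q)$. By Lemma \ref{usecompatible}, $\Psi(W)$ is a $2$-regular subgraph of $G_{a,b}(n)$ on $n$ vertices. A straightforward induction using Lemma \ref{useL}(ii) shows $L_j = \mathcal{L}(\mathcal{G}(B_1, \ldots, B_j))$ for each $j$, and Lemma \ref{useL}(iii) combined with the acyclicity of each $B_{j+1}^{L_j}$ built into the intermediate-edge rule implies $\mathcal{G}(B_1, \ldots, B_{q-1})$ is acyclic. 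The terminal-edge condition then triggers Lemma \ref{useL}(iv), yielding that $\Psi(W)$ is a single cycle, and hence a Hamiltonian cycle of $G_{a,b}(n)$.

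The two maps are inverse to one another: $\Psi \circ \Phi = \mathrm{id}$ because the block sequence underlying $\Phi(H)$ is exactly the decomposition furnished by Lemma \ref{decomp22}, while $\Phi \circ \Psi = \mathrm{id}$ because the labels $L_j$ along any valid walk are uniquely determined by the $B_j$'s via $L_1 = \mathcal{L}(B_1)$ and the recursion $L_{j+1} = \mathcal{L}(B_{j+1}^{L_j})$. The central difficulty, and the one I expect to occupy the most space in a careful write-up, is maintaining through each block transition the coupled invariant that the graph assembled so far is acyclic \emph{and} that its path-endpoint pattern in the current boundary column is recorded by $L_j$; Lemma \ref{useL} has been designed precisely to make this propagation automatic, so once that lemma is available the rest is a case-by-case check of the three edge-construction rules of $D_{a,b}$.
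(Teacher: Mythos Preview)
Your proposal is correct and follows essentially the same route as the paper's own proof: both directions use Lemma~\ref{decomp22} to pass between Hamiltonian cycles and compatible block sequences, and then invoke Lemma~\ref{useL} parts (ii)--(iv) to verify that the edge rules of $D_{a,b}$ hold step by step (coupling the label identity $L_j = \mathcal{L}(\mathcal{G}(B_1,\ldots,B_j))$ with acyclicity of the partial graph). Your explicit naming of the maps $\Phi,\Psi$ and the check that they are mutually inverse is slightly more formal than the paper's presentation, but the argument is the same.
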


\begin{proof} 
As in Proposition~\ref{2regular}, the proof is constructive and gives the explicit correspondence asserted in the theorem. 

Let $C$ be a Hamiltonian cycle in $G_{a,b}$, and let $q,r$ be as given in \eqref{ndiv}. By Lemma~\ref{decomp22}, there exists a unique compatible sequence $(B_1, B_2, \ldots , B_q)$ of blocks such that $B_1$ is a start block, $B_2, \ldots , B_{q-1}$ are mid blocks and $B_q$ is an end block whose right column has length $r+1$. Note that $q \ge 2$ as $n > 2b$. Put $H_j = \mathcal{G}(B_1,\ldots,B_j))$ and $L_j = \mathcal{L}(H_j)$ for $1 \leq j \leq q-1$. We claim that
\begin{equation}\label{walkform} 
W: S \longrightarrow (B_1,L_1) \longrightarrow (B_2,L_2) \longrightarrow \ldots \longrightarrow (B_{q-1},L_{q-1}) \longrightarrow B_q
\end{equation}
is a walk in $D_{a,b}$ with $S(W) = n$. We first establish that $W$ is a walk in $D_{a,b}$. First, note $G_j$ is acyclic for $1 \leq j \leq q-1$ since it is isomorphic to a proper subgraph of the cycle $C$. Since $L_1 = \mathcal{L}(H_1) = \mathcal{L}(B_1)$, $D_{a,b}$ contains the edge $\big ( (S, (B_1,L_1) \big )$. Let $j \in \{ 2, 3, \ldots , q-2 \}$. Then $B_j$ is a mid block and $(B_{j-1}, B_j)$ is compatible. Part (ii) of~Lemma~\ref{useL} now yields $\mathcal{L}(B_j^{L_{j-1}}) = \mathcal{L}(H_j) = L_j$, and part (iii) implies that $B_j^{L_j}$ is acyclic. It follows that $D_{a,b}$ contains the edge $\big ( (B_j,L_j), (B_{j+1},L_{j+1}) \big )$. Similarly, since $H_q$ is isomorphic to the cycle $C$, the graph obtained from $B_q^{L_{q-1}}$ by removing all its isolated vertices is a cycle by Lemma~\ref{useL}~(iii). So $D_{a,b}$ contains the edge $(B_q,L_q)$. 

The first edge $\big ( (S, (B_1,L_1) \big )$ in $W$ has weight $2b$. Each edge $\big ( (B_j,L_j), (B_{j+1},L_{j+1}) \big )$, with $1 \le j \leq q-2$, has weight $b$, and the edge $\big ( (B_{q-1},L_{q-1}), B_q \big )$ has weight $r+1$. So $S(W) = 2b + (q-2)b + r + 1 = n$.

Conversely, let $W$ be a walk in $D_{a,b}$ from $S$ to some end vertex such that $S(W) = n$. Then $W$ is of the form \eqref{walkform} for suitable sets of pairs $L_1, \ldots , L_q$ and suitable blocks $B_1, \ldots , B_q$. The construction of $D_{a,b}$ imposes the following conditions on these blocks:
\begin{enumerate} \itemsep 0pt
    \item $B_1$ is a start block, $B_2, \ldots , B_{q-1}$ are mid blocks and $B_q$ is an end block;
    \item $(B_j, B_{j+1})$ is compatible for $1 \leq j \le q$;
    \item $L_1 = \mathcal{L}(B_1)$ and $L_j = \mathcal{L}(B_j^{L_{j-1}})$ for $1 \leq j \leq q-1$;
    \item $B_1$ is acyclic and $B_{j+1}^{L_j}$ is acyclic for $1 \leq j \leq q-2$;
    \item The graph obtained from $B_q^{L_{q-1}}$ by removing all its vertices is a cycle.
\end{enumerate}

Put $H_j = \mathcal{G}(B_1,\ldots,B_j)$ for $1 \leq j \leq q$. We prove that $L_j = \mathcal{L}(H_j)$ and $H_j$ is acyclic for $1 \leq j \leq q-1$. Certainly $L_1 = \mathcal{L}(B_1) = \mathcal{L}(H_1)$ and $H_1 = B_1$ is acyclic. Assume inductively that $L_j = \mathcal{L}(H_j)$ and $H_j$ is acyclic for some $j \in \{ 1, \ldots , q-2 \}$. By Lemma~\ref{useL}~(ii), we have $\mathcal{L}(H_{j+1}) = \mathcal{L}(B_{j+1}^{\mathcal{L}(H_j)}) = \mathcal{L}(B_{j+1}^{L_j}) = L_{j+1}$. Since $B_{j+1}^{L_j}$ is acyclic, $H_{j+1}$ is acyclic by Lemma~\ref{useL}(iii).

Since $H_{q-1}$ is acyclic and $L_{q-1} = \mathcal{L}(H_{q-1})$, Lemma~\ref{useL}~(iv)
shows that $H_q$ is a cycle. The edge weights in $W$ yield $n = qb + s$ where $s$ is the length of the right column of~$B_q$. By Lemma~\ref{usecompatible}, $H_q$ is isomorphic to a 2-regular subgraph of $G_{a,b}(n)$ on $n$ vertices. It follows that $H_q$ is isomorphic to a Hamiltonian cycle in $G_{a,b}(n)$. 

The correspondence $H_q \longleftrightarrow W$ is the desired bijection. 
\end{proof}

Our main result is now again an immediate consequence of Lemma \ref{recur}.

\begin{cor} \label{mainresult}
The number of $(a,b)$-necklaces of length~$n$ satisfies a linear homogeneous recurrence relation.
\end{cor}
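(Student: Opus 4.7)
The plan is to combine Theorem~\ref{mainthm} with the transfer-matrix argument underlying Lemma~\ref{recur}. By Theorem~\ref{mainthm}, for every $n > 2b$, $N_{a,b}(n)$ equals the number of walks $W$ in $D_{a,b}$ that start at the distinguished vertex $S$, terminate at some end vertex, and satisfy $S(W) = n$. It therefore suffices to prove that this restricted count obeys a linear homogeneous recurrence.

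First I would apply the edge-subdivision construction from the proof of Lemma~\ref{recur}: every weight-$w$ edge of $D_{a,b}$ is replaced by a directed unweighted path of length $w$, producing an unweighted digraph $D^*_{a,b}$. Walks $W$ in $D_{a,b}$ with $S(W) = n$ connecting a fixed pair $(x,y)$ then correspond bijectively to walks of length $n$ from $x$ to $y$ in $D^*_{a,b}$. If $M$ denotes the adjacency matrix of $D^*_{a,b}$, the number of walks of length $n$ from $x$ to $y$ equals $(M^n)_{xy}$, and by the Cayley--Hamilton theorem (compare Theorem~4.7.1 and Corollary~4.7.4 of \cite{RS}) the sequence $\{(M^n)_{xy}\}_{n \ge 0}$ satisfies the linear homogeneous recurrence whose characteristic polynomial is that of $M$, independently of the choice of $x$ and $y$.

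Summing $(M^n)_{Sy}$ over the finitely many vertices $y$ of $D^*_{a,b}$ that correspond to end vertices of $D_{a,b}$ yields $N_{a,b}(n)$ for $n > 2b$. Since a finite sum of sequences each obeying a common linear recurrence obeys the same recurrence, the corollary follows for all sufficiently large $n$, with the finitely many values $N_{a,b}(n)$ for $n \le 2b$ serving as initial conditions. The only subtlety is that Lemma~\ref{recur} as stated counts \emph{all} walks in $D_{a,b}$, while here we require walks from a specified start vertex to a specified set of terminal vertices; working entry-wise with $M^n$ before summing over the relevant $y$ resolves this and constitutes the main, though still essentially routine, adaptation. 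No genuine obstacle remains, since the combinatorial content has already been captured in Theorem~\ref{mainthm}.
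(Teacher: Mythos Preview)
Your proposal is correct and follows essentially the same route as the paper, which simply states that the corollary is ``an immediate consequence of Lemma~\ref{recur}'' applied after Theorem~\ref{mainthm}. You are in fact more careful than the paper: you explicitly flag and resolve the minor discrepancy that Lemma~\ref{recur} is phrased for \emph{all} walks of given weight, whereas Theorem~\ref{mainthm} requires walks from $S$ to end vertices, and you correctly observe that the transfer-matrix argument works entry-wise and hence handles this restriction without difficulty.
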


The construction of Theorem~\ref{mainthm}, and hence the result of Corollary~\ref{mainresult}, can be extended to necklaces where more than two differences are allowed. If $A$ is a finite set of positive integers, then we can define an $A$-necklace to be a circular arrangement of $\{0,1,\ldots,n-1\}$ such that adjacent beads have an absolute difference in $A$. If $N_A(n)$ denotes the number of $A$-necklaces of length $n$, then an analogous argument to the reasoning in this section shows that $N_A(n)$ satisfies a linear homogeneous recurrence relation whose coefficients are integers that depend on $A$.



\section{Conclusion} \label{S:conclude}

The proof of our main existence result for $(a,b)$-necklaces (Theorem~\ref{exist}) crucially requires that $a$ and $b$ not be too close together; specifically $b \ge 2a$. This restriction can almost certainly be removed, as formulated in Conjecture~\ref{conje}, but our construction does not cover the case $b < 2a$ and a new approach for this scenario is needed. The fact that the count of $(a,b)$-necklaces satisfies a recurrence relation is no help here. Although Theorem~\ref{expest} establishes the existence of $(a,b)$-necklaces of arbitrary length, it only proves their existence for lengths that are multiples of $a+b$. In the cases where $(a,b)$-necklaces of all sufficiently large lengths are known to exist, Corollary \ref{mainresult} shows that their number is either bounded or grows exponentially in the length of the necklace. 

The dependence on $a$ and $b$ of the minimal degree of the recurrence relation for $N_{a,b}(n)$ seems unclear. The degree of the recurrence obtained via Corollary~\ref{mainresult} can be bounded in terms of the number of vertices in the digraph obtained by subdividing the edges of $D_{a,b}$ in Theorem~\ref{mainthm} in accordance with Lemma~\ref{recur}. Counting sets of row indices, blocks and edge subdivisions yields a crude upper bound of $(cb)^b$ for some explicitly computable positive constant $c$, i.e.\ super-exponential in $b$. This bound seems far from tight based on the explicit recursions we obtained. Moreover, the technique need not yield a recurrence relation of minimal degree. 

Corollary~\ref{mainresult} extends to any finite set of differences.  In particular, the construction in the proof of Theorem~\ref{mainthm} is completely direct and explicit in the sense that moving along any walk in $D_{a,b}$ from the start vertex $S$ to an end vertex grows a Hamiltonian cycle in $G_{a,b}(n)$ for some $n$. It may be possible to apply the general technique to other counting problem of a similar flavour.

\section*{Acknowledgement}

The first and third author mourn the loss of their co-author Richard Guy who passed away on March 9, 2020, at the impressive age of 103. Richard was a mathematical giant, a passionate educator, a generous philanthropist and an avid mountaineer. To us, he was also a valued colleague, mentor and friend.


\vspace{1.5cm}

\begin{multicols}{2}
\small
\noindent {\sc Department of Mathematics  \\ The University of British Columbia \\Room 121, 1984 Mathematics Road \\ Vancouver, BC \\ Canada V6T 1Z2} \\
\url{epwhite@math.ubc.ca}\\

\columnbreak

\noindent {\sc \mbox{Department of Mathematics and Statistics} \\
University of Calgary \\
2500 University Drive NW \\
Calgary, AB \\
Canada T2N 3Z4} \\
\url{rscheidl@ucalgary.ca}

\end{multicols}

\vspace{1.5cm}

\appendix
\section{Appendix --- Numerical Data} \label{S:comput}

We generated tabulation data for $N_{a,b}(n)$ with the aid of a depth first search algorithm for constructing and counting $(a,b)$-necklaces, written in Python and run on a laptop. The algorithm searches the tree of sequences consisting of elements in the set $\{-b,-a,a,b\}$. A copy of our code, along with additional data files, are available from the first author upon request.

Tables~\ref{table1} and~\ref{table2} list the values  of $N_{a,b}(n)$ for $1 \leq a < b \leq 10$ and $n \leq 40$. To corroborate the correctness of our data, we verified that in each case,
\begin{itemize} \itemsep 0pt
    \item there is a unique $(a,b)$-necklace of length $a+b$,
    \item there is an $(a,b)$-necklace of length $3a+b$ when $2a \leq b$,
    \item there are no $(a,b)$-necklaces of odd length when $ab$ is odd and 
    \item the table entries agree with Theorem~\ref{expest} and the results of Section \ref{S:count}.
\end{itemize}

None of the sequences~$N_{a,b}(n)$ for which we obtained a meaningful amount of data appear in OEIS, with the exception of the pairs $(a,b) = (1,2), (1,3)$ and $(2,3)$. Using a number wall~\cite[pp. 85-89]{CG} (also known as a quotient-difference table), we checked that none of these counts $N_{a,b}(n)$ satisfy a linear recurrence whose degree is less than the number of terms computed, again with the exception of $(a,b) = (1,2), (1,3), (2,3), (1,4)$.

\begin{table}[ht] 
\begin{center}
\resizebox{17cm}{!}{
 \begin{tabular}{|c | c c c c c c c c c c c c c  c|} 
 \hline
 \backslashbox{$n$}{$a,b$} & 1,3 & 1,4 &1,5 & 1,6 & 1,7 & 1,8 & 1,9 & 1,10 & 2,3 & 2,5 & 2,7 & 2,9 & 3,4 & 3,5  \\ [0.5ex] 
 \hline\hline
 4 &1 & 0 & 0 & 0 & 0 & 0 & 0 & 0 & 0 & 0 & 0 & 0  & 0 & 0\\
  \hline
 5 &0 & 1 & 0 & 0 & 0 & 0 & 0 & 0 & 1 & 0 & 0 & 0  & 0 & 0 \\
  \hline
  6 &2 & 0 & 1 & 0 & 0 & 0 & 0 & 0 & 0 & 0 & 0 & 0  & 0 & 0\\
  \hline
    7 &0 & 1& 0 & 1 & 0 & 0 & 0 & 0 & 0 & 1 & 0 & 0 & 1 & 0 \\
  \hline
    8 &3 & 1& 1& 0 & 1 & 0 & 0 & 0 & 0 & 0 & 0 & 0 & 0 & 1 \\
  \hline
    9 &0 & 1& 0 & 1 & 0 & 1 & 0 & 0 & 0 & 0 & 1 & 0 & 0 & 0\\
  \hline
    10 &5 & 3& 2 & 0 & 1 & 0 & 1 & 0 & 1 & 0 & 0 & 0 & 0 & 0 \\
  \hline
    11 &0 & 2 & 0 & 1 & 0 & 1 & 0 & 1 & 1 & 1 & 0 & 1 & 0 & 0\\
  \hline
    12 &8 & 3& 5 & 1 & 1 & 0 & 1 & 0 & 1 & 0 & 0 & 0 & 0 & 0 \\
  \hline
    13 &0 & 6& 0 & 1 & 0 & 1 & 0 & 1 & 1 & 1 & 1 & 0 & 0  & 0\\
  \hline
    14 &13 & 5& 9 & 5 & 2 & 0 & 1 & 0 & 1 & 3 & 0 & 0 & 1 & 0 \\
  \hline
    15 &0 & 10& 0 & 2 & 0 & 1 & 0 & 1 & 2 & 0 & 0 & 1 & 0 & 0\\
  \hline
    16 &21 & 12& 18 & 7 & 7 & 1 & 1 & 0 & 3 & 1 & 0 & 0 & 0& 2\\
  \hline
    17 &0 & 14& 0 & 8 & 0& 1 & 0 & 1 & 4 & 5 & 0 & 0 & 0& 0 \\
  \hline
    18 &34 & 25& 34 & 8 & 13 & 7 & 2 & 0 & 5 & 5 & 5 & 0 & 0 & 2 \\
    \hline
    19 &0 & 27& 0& 24 & 0 & 2 & 0 & 1 & 6 & 2 & 2 & 1 & 0 & 0 \\
    \hline
    20 &55 & 40& 67 & 14 & 28 & 11 & 9 & 1 & 8 & 11 & 0 & 0 & 0  & 2\\
    \hline
    21 &0 & 57& 0& 46 & 0 & 8 & 0 & 1 & 11 & 18 & 0 & 1 & 2  & 0\\
    \hline
    22 &89 & 68& 131 & 35 & 64 & 16 & 17 & 9 & 15 & 8 & 8 & 7 & 1 & 5 \\
     \hline
    23 &0 & 104& 0 & 65 & 0 & 32 & 0 & 2 & 20 & 21 & 18 & 0 & 1 & 0 \\
     \hline
    24 &144 & 133& 251 & 105 & 124 & 20 & 36 & 15 & 26 & 48 & 4 & 0 & 0 & 14  \\
     \hline
    25 &0 & 177& 0 & 99 & 0 & 96 & 0 & 8 & 34 & 40 & 0 & 8 & 1 & 0 \\
     \hline
    26 &233 & 255& 493 & 248 & 231 & 36 & 92 & 24 & 45 & 48 & 5 & 16 & 1 & 28 \\
     \hline
    27 &0 & 324& 0 & 204 & 0 & 190 & 0 & 32 & 60 & 115 & 67 & 0 & 2 & 0 \\
     \hline
    28 &377 & 454& 956 & 437 & 495 & 87 & 232 & 36 & 80 & 145 & 53 & 4 & 8 & 46 \\
     \hline
    29 &0 & 617& 0 & 512 & 0 & 276 & 0 & 128 & 106 & 134 & 13 & 72 & 8 & 0 \\
     \hline
    30 &610 & 811& 1856 & 701 & 1061 & 276 & 454 & 48 & 140 & 272 & 8 & 36 & 10 & 93  \\
     \hline
    31 &0 & 1136& 0 & 1245 & 0 & 382 & 0 & 384 & 185 & 431 & 117 & 0 & 7& 0 \\
     \hline
    32 &987 & 1507& 3616 & 1280 & 2074 & 856 & 786 & 88 & 245 & 436 & 334 & 64 & 13 & 195\\
     \hline
    33 &0 & 2042& 0 & 2543 & 0 & 605 & 0 & 766 & 325 & 665 & 205 & 288 & 17 & 0 \\
     \hline
    34 &1597 & 2803& 7021 & 2784 & 4233 & 2136 & 1544 & 208 & 431 & 1161 & 80 & 35 & 28 & 399 \\
       \hline
    35 &0 & 3729& 0 & 4527 & 0 & 1275 & 0 & 1122 & 571 & 1402 & 176 & 41 & 52 & 0 \\
       \hline
    36 &2584 & 5109& 13656 & 6463 & 8914 & 3934 & 3530 & 650 & 756 & 1767 & 1026 & 489 & 69  & 764\\
       \hline
    37 &0 & 6904& 0 & 8106 & 0 & 3295 & 0 & 1568 & 1001 & 3020 & 1424 & 669 & 91 & 0\\
       \hline
    38 &4181 & 9290& 26551  & 13970 & 18237 & 6342 & 8056 & 2147 & 1326 & 4186 & 849 & 73 & 103  & 1508\\
       \hline
    39 &0 & 12692& 0 & 16162 & 0 & 9301 & 0 & 2188 & 1757 & 5071 & 629 & 581 & 148  & 0\\
       \hline
    40 &6765 & 17070& 51610 & 27115 & 36699 & 10282 & 15859 & 6669 & 2328 & 7848 & 2241 & 3140 & 204 & 3024  \\
   \hline
\end{tabular}}
\caption{Values of $N_{a,b}(n)$ for $1 \leq a < b \leq 10$ and $n \le 40$, part 1}
\label{table1}
\end{center}
\end{table}

\begin{table}[ht]
\begin{center}
\resizebox{17cm}{!}{
 \begin{tabular}{|c | c c c c c c c c c c c c c c c c|} 
 \hline
 \backslashbox{$n$}{$a,b$} & 3,7 & 3,8 &3,10 & 4,5 & 4,7 & 4,9 &5,6 & 5,7 & 5,8 & 5,9 & 6,7 & 7,8 & 7,9 & 7,10 & 8,9 & 9,10  \\ [0.5ex] 
 \hline\hline
 9 & 0 & 0 & 0 & 1 & 0 & 0 & 0 & 0 & 0 & 0 & 0 & 0 & 0  & 0 & 0 & 0 \\
   \hline 
 10 & 1 & 0 & 0 & 0 & 0 & 0 & 0 & 0 & 0 & 0 & 0 & 0 & 0  & 0 & 0 & 0 \\
  \hline 
   11 & 0 & 1 & 0 & 0 & 1 & 0 & 1 & 0 & 0 & 0 & 0 & 0 & 0  & 0 & 0 & 0 \\
  \hline 
  
   12 & 0 & 0 & 0 & 0 & 0 & 0 & 0 & 1 & 0 & 0 & 0 & 0 & 0  & 0 & 0 & 0 \\
  \hline 
   13 & 0 & 0 & 1 & 0 & 0 & 1 & 0 & 0 & 1 & 0 & 1 & 0 & 0  & 0 & 0 & 0 \\
  \hline 
   14 & 0 & 0 & 0 & 0 & 0 & 0 & 0 & 0 & 0 & 1 & 0 & 0 & 0  & 0 & 0 & 0 \\
  \hline 
   15 & 0 & 0 & 0 & 0 & 0 & 0 & 0 & 0 & 0 & 0 & 0 & 1 & 0  & 0 & 0 & 0 \\
  \hline 
   16 & 1 & 0 & 0 & 0 & 0 & 0 & 0 & 0 & 0 & 0 & 0 & 0 & 1  & 0 & 0 & 0 \\
  \hline 
   17 & 0 & 1 & 0 & 0 & 0 & 0 & 0 & 0 & 0 & 0 & 0 & 0 & 0  & 1 & 1 & 0 \\
  \hline 
   18 & 1 & 0 & 0 & 1 & 0 & 0 & 0 & 0 & 0 & 0 & 0 & 0 & 0  & 0 & 0 & 0 \\
  \hline 
  19 & 0 & 0 & 1 & 0 & 0 & 0 & 0 & 0 & 0 & 0 & 0 & 0 & 0  & 0 & 0 & 1 \\
  \hline 
  20 & 4 & 0 & 0 & 0 & 0 & 0 & 0 & 0 & 0 & 0 & 0 & 0 & 0  & 0 & 0 & 0 \\
  \hline 
  21 & 0 & 1 & 0 & 0 & 0 & 1 & 0 & 0 & 0 & 0 & 0 & 0 & 0  & 0 & 0 & 0 \\
  \hline 
  22 & 1 & 5 & 0 & 0 & 3 & 0 & 1 & 0& 0 & 0 & 0 & 0 & 0  & 0 & 0 & 0 \\
  \hline 
  23 & 0 & 0 & 0 & 0 & 0 & 1 & 0 & 0 & 0 & 0 & 0 & 0 & 0  & 0 & 0 & 0 \\
  \hline 
  24 & 10 & 0 & 0 & 0 & 1 & 0 & 0 & 2 & 0 & 0 & 0 & 0 & 0  & 0 & 0 & 0 \\
  \hline 
  25 & 0 & 0 & 0 & 0 & 2 & 0 & 0 & 0 & 0 & 0 & 0 & 0 & 0  & 0 & 0 & 0 \\
  \hline 
  26 & 19 & 0 & 7 & 0 & 0 & 5 & 0 & 0 & 3 & 0 & 1 & 0 & 0  & 0 & 0 & 0 \\
  \hline 
  27 & 0 & 15 & 2 & 2 & 1 & 0 & 0 & 0 & 0 & 0 & 0 & 0 & 0  & 0 & 0 & 0 \\
  \hline 
  28  & 34 & 14 & 0 & 0 & 4 & 1 & 0 & 0 & 0 & 4 & 0 & 0 & 0  & 0 & 0 & 0 \\
  \hline 
  29 & 0 & 0 & 0 & 0 & 1 & 0 & 0 & 0 & 1 & 0 & 0 & 0 & 0  & 0 & 0 & 0 \\
  \hline 
   30 & 86 & 0 & 0 & 0 & 9 & 0 & 0 & 0 & 0 & 1 & 0 & 1 & 0  & 0 & 0 & 0 \\
  \hline 
   31 & 0 & 2 & 0 & 0 & 8 & 13 & 0 & 0 & 0 & 0 & 0 & 0 & 0  & 0 & 0 & 0 \\
  \hline 
   32 & 115 & 33 & 20 & 0 & 5 & 4 & 0 & 0 & 0 & 3 & 0 & 0 & 2  & 0 & 0 & 0 \\
  \hline 
   33 & 0 & 113 & 54 & 0 & 48 & 5 & 2 & 0 & 0 & 0 & 0 & 0 & 0  & 0 & 0 & 0 \\
  \hline 
   34 & 366 & 8 & 0 & 0 & 20 & 44 & 0 & 0 & 0 & 1 & 0 & 0 & 0  & 3 & 1 & 0 \\
  \hline 
   35 & 0 & 0 & 0 & 0 & 32 & 2 & 0 & 0 & 2 & 0 & 0 & 0 & 0  & 0 & 0 & 0 \\
  \hline 
   36 & 615 & 6 & 0 & 8 & 115 & 68 & 0 & 14 & 5 & 8 & 0 & 0 & 0  & 0 & 0 & 0 \\
  \hline 
   37 & 0 & 93 & 0 & 2 & 43 & 18 & 0 & 0 & 0 & 0 & 0 & 0 & 0  & 0 & 0 & 0 \\
  \hline 
   38 & 1343 & 545 & 5 & 1 & 101 & 37 & 0 & 9 & 3 & 26 & 0 & 0 & 0  & 0 & 0 & 1 \\
  \hline 
   39 & 0 & 316 & 393 & 1 & 250 & 251 & 0 & 0 & 54 & 0 & 2 & 0 & 0  & 0 & 0 & 0 \\
  \hline 
   40 & 2841 & 0 & 134 & 0 & 133 & 34 & 0 & 5 & 0 & 40 & 0 & 0 & 0  & 0 & 0 & 0 \\
  \hline   
\end{tabular}}
\caption{Values of $N_{a,b}(n)$ for $1 \leq a < b \leq 10$ and $n \le 40$, part 2}
\label{table2}
\end{center}
\end{table}

\end{document}